\documentclass[12pt]{amsart}
\usepackage[utf8]{inputenc}
\usepackage{amsmath, amssymb}
\usepackage{mathtools}
\usepackage{amsthm}
\usepackage{euscript, mathrsfs}
\usepackage{fullpage}
\usepackage{hyperref}
\usepackage{tikz}
\usepackage{xcolor}
\usepackage{array}
\usepackage{url}
\numberwithin{equation}{section}
\usepackage{marginnote}
\usepackage[shortlabels]{enumitem}
\usepackage{appendix}
\usepackage{lscape}
\usepackage{afterpage}
\usepackage{array}
\usepackage{tikz-3dplot}

\usepackage{url}
\usepackage{tkz-euclide}

\renewcommand{\arraystretch}{0.8}
\newcommand*\circled[1]{\tikz[baseline=(char.base)]{
            \node[shape=circle,draw,inner sep=1pt] (char) {#1};}}

\newtheorem{thm}{Theorem}[section]
\newtheorem{cor}[thm]{Corollary}
\newtheorem{lem}[thm]{Lemma}
\newtheorem{prop}[thm]{Proposition}
\theoremstyle{definition}
\newtheorem{ex}[thm]{Example}
\newtheorem{defn}[thm]{Definition}
\newtheorem{rmk}[thm]{Remark}

\def\defcone{\operatorname{Def}}
\def\typecone{\operatorname{TC}}

\def\vert{\operatorname{Vert}}
\def\tes{\operatorname{Tes}}
\def\flow{\operatorname{Flow}}

\def\supp{\operatorname{supp}}

\def\1{{\boldsymbol{1}}}
\def\0{{\boldsymbol{0}}}
\def\ba{{\boldsymbol{a}}}
\def\bb{{\boldsymbol{b}}}
\def\bc{{\boldsymbol{c}}}
\def\be{{\boldsymbol{e}}}

\def\bv{{\boldsymbol{v}}}
\def\bw{{\boldsymbol{w}}}

\def\bg{{\boldsymbol{g}}}
\def\beff{\boldsymbol{f}}
\def\bd{\boldsymbol{d}}

\def\bm{\boldsymbol{m}}

\def\bt{\boldsymbol{t}}

\def\bu{\boldsymbol{u}}
\def\bx{\boldsymbol{x}}

\def\dep{\operatorname{Dep}}

\def\Hs{\boldsymbol{\eta}}
\def\hs{\eta}
\def\balpha{\boldsymbol{\alpha}}
\def\bbeta{\boldsymbol{\beta}}

\def\U{\mathbb{U}}
\def\tU{\widetilde{\mathbb{U}}}

\def\Z{\mathbb{Z}}
\def\R{\mathbb{R}}

\def\cI{\mathcal I}

\makeatletter
\renewcommand*\env@matrix[1][\arraystretch]{%
  \edef\arraystretch{#1}%
  \hskip -\arraycolsep
  \let\@ifnextchar\new@ifnextchar
  \array{*\c@MaxMatrixCols c}}
\makeatother

\newcommand\commentout[1]{}

\title{Deformation cones of Tesler polytopes}
\author{Yonggyu Lee and Fu Liu}
\date{\today}

\address{Yonggyu Lee, Department of Mathematics, University of California, Davis, One Shields Avenue, Davis, CA 95616 USA.}
\email{ygulee@ucdavis.edu}
\address{Fu Liu, Department of Mathematics, University of California, Davis, One Shields Avenue, Davis, CA 95616 USA.}
\email{fuliu@ucdavis.edu}
\begin{document}

\maketitle
\begin{abstract}

For $\ba \in \R_{\geq 0}^{n}$, the Tesler polytope $\tes_{n}(\ba)$ is the set of upper triangular matrices with non-negative entries whose hook sum vector is $\ba$. We first give a different proof of the known fact that for every fixed $\ba_{0} \in \R_{>0}^{n}$, all the Tesler polytopes $\tes_{n}(\ba)$ are deformations of $\tes_{n}(\ba_{0})$. We then calculate the deformation cone of $\tes_{n}(\ba_{0})$. In the process, we also show that any deformation of $\tes_{n}(\ba_{0})$ is a translation of a Tesler polytope. Lastly, we consider a larger family of polytopes called flow polytopes which contains the family of Tesler polytopes and give a characterization on which flow polytopes are deformations of $\tes_{n}(\ba_{0})$.
\end{abstract}

\section{Introduction}

For $\ba \in \R_{\geq 0}^{n}$, the Tesler polytope, denoted as $\tes_{n}(\ba)$, is defined as the set of upper triangular matrices with non-negative entries whose ``hook sum vector'' is $\ba$. When $\ba=\boldsymbol{1}:=(1,\dots,1) \in \R^{n}$, elements of $\tes_{n}(\ba)$ with integer coordinates are called Tesler matrices. Tesler matrices were initially introduced by Tesler in the context of Macdonald polynomials and subsequently rediscovered by Haglund in his work on the diagonal Hilbert series \cite{haglund2011polynomial}. As a result, Tesler matrices play a central role in the field of diagonal harmonics \cite{armstrong2012combinatorics, garsia2014constant, gorsky2015refined, haglund2018delta, wilson2017weighted}. Motivated by this significance of Tesler matrices, M\'{e}sz\'{a}ros, Morales and Rhoades \cite{Meszaros2017} defined and studied the Tesler polytopes of hook sum $\ba \in \Z_{>0}^n$. Their research led to several intriguing findings, including the observation that Tesler polytopes are unimodularly equivalent to certain flow polytopes, which is a family of polytopes that have connection to various areas of mathematics such as toric geometry, representation theory, special functions and algebraic combinatorics, as detailed in \cite{benedetti2019combinatorial} and its references.
Please refer to \cite{Meszaros2017} for more background about Tesler polytopes. 

Beyond their general link to flow polytopes, specific subfamilies of Tesler polytopes are fascinating objects due to their own interesting combinatorial properties and their association with other mathematics domains. We have already mentioned that the importance of the Tesler matrix polytope $\tes_n(\1)$ in the study of diagonal harmonics. Additionally, it is known \cite{corteel2017volumes} that $\tes_{n}(1,0,\dots,0)$ is unimodularly equivalent to the Chan-Robbins-Yuen (CRY) polytope, whose volume is the product of the first $n-2$ Catalan numbers \cite{chan2000volume, zeilberger1999proof}. As of today, simple proofs for this surprising result are still undiscovered. Moreover, the CRY polytope is a face of the Birkhoff polyope which is also a well-studied subject. In particular, computing volumes of Birkhoff polytopes remains a challenging problem that has attracted a lot of recent research attention \cite{canfield2007asymptotic, de2009generating, pak2000four}. 

Noticing that important features remain the same, in our prior work \cite{tesler-pos-bv}, we extend the domain of Tesler polytopes $\tes_n(\ba)$ to $\R_{\geq 0}^{n}$, and study a conjecture regarding Ehrhart positivity for both $\tes_{n}(\boldsymbol{1})$ and $\tes_{n}(1,0,\dots, 0)$, originally posed by Morales \cite{Moralesconjecture}. In this paper, we will continue our study on this broader family of Tesler polytopes. However, our primary focus will be directed towards investigating their deformation cones.

There are many ways of defining deformations of polytopes. The most commonly used one is stated in terms of the normal cone: a polytope $Q$ is a \emph{deformation} of a polytope $P$ if the normal fan of $P$ is a refinement of the normal fan of $Q$. 

However, in this paper we use the one introduced in \cite{castillo2017deformation} by Castillo and the second author; see Definition \ref{defdef} for details. It was shown in \cite[Proposition 2.6]{castillo2017deformation} that Definition \ref{defdef} is equivalent to the normal fan definition of deformations. 
We parametrize each deformation $Q$ of $P$ by a ``deformation vector'' and the collection of the deformation vectors forms the ``deformation cone'' of $P$. 
Notably, the relative interior of the deformation cone is called the type cone \cite{McMullen1973}, which contains the deformation vectors of all the polytopes with the same normal fan as $P$. In the case when $P$ is a rational polytope, the type cone of $P$ is related to the nef cone of the toric variety associated to the normal fan of $P$ \cite[Section 6.3]{cls}.

One of the most famous family of deformations is generalized permutohedra, which are deformations of the regular permutohedron \cite{Edmonds, Pos2009,PRW2008}.  In the last decade, different directions of research have been done on determining the deformation cones of polytopes that are related to generalized permutohedra. In \cite{castillo2017deformation}, Castillo and the second author define the nested permutohedron whose normal fan refines that of the regular permutohedron (so the regular permutohedron is a deformation of the nested permutohedron), and describe the deformation cone of the nested permutohedron. On the other hand, Padrol, Pilaud and Poullot \cite{PPP2023} determine the deformation cones of nestohedra, which is a family of deformations of the regular permutohedron. 

In this paper, we first give a new proof of a known fact (\cite[Remark 9]{baldoni2008kostant}) that for any fixed positive integer $n$ and $\ba_{0}\in \R^{n}_{>0}$, the Tesler polytope $\tes_{n}(\ba)$ is a deformation of $\tes_{n}(\ba_{0})$ for all $\ba \in \R_{\geq 0}^{n}$ (Theorem \ref{main}). 

Then we prove the following theorem - one of our main results - which gives a description of a deformation cone of $\tes_{n}(\ba_{0})$.

(Please refer to Section \ref{preliminary}, especially Definition \ref{defdef} and Section \ref{subsubsec:defthmnota}, for necessary definitions and notations.)

\begin{thm} \label{thm:tesdefcone}
 Let $\ba_{0} \in \R^{n}_{>0}$. Then the deformation cone of $\tes_{n}(\ba_{0})$ with respect to $(L_{n},-P_{n})$ is 
 \begin{equation}\label{eq:tesdefcone}
 \{(\ba,\tilde{\bb})\in \R^{n}\times \tU(n)~~|~~\hs_{i}(\tilde{\bb}) \geq -a_{i} \text{ for all }1 \leq i \leq n-1  \}
 \end{equation}
 where $a_{i}$ is the $i$-th coordinate of $\ba$.
 
 Moreover, any deformation of $\tes_{n}(\ba_{0})$ is a translation of a Tesler polytope. 
\end{thm}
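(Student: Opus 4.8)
The plan is to prove the two inclusions of \eqref{eq:tesdefcone} separately, obtaining the \emph{moreover} statement along the way. Throughout I work in the framework of Definition \ref{defdef}, where a deformation vector is a pair $(\ba,\tilde{\bb})\in\R^n\times\tU(n)$: here $\ba$ records the right-hand side of the hook-sum equations encoded by $L_n$, while $\tilde{\bb}$ records the shifts of the entrywise-nonnegativity facets encoded by $-P_n$. The associated (virtual) polytope is $P_{(\ba,\tilde{\bb})}=\{x : L_n x=\ba,\ -P_n x\le \tilde{\bb}\}$, and $(\ba,\tilde{\bb})$ lies in the deformation cone precisely when this is a genuine deformation of $\tes_n(\ba_0)$. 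The key input is that, by the Castillo--Liu characterization, a vector lies in the deformation cone if and only if it satisfies the wall-crossing inequality attached to each edge of $\tes_n(\ba_0)$ (each asserting that a signed edge length stays nonnegative).

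First I would establish the inclusion $\supseteq$ together with the \emph{moreover} statement. Given $(\ba,\tilde{\bb})$ with $\hs_i(\tilde{\bb})\ge -a_i$ for $1\le i\le n-1$, I would produce an upper triangular matrix $C$ whose entrywise coordinates realize the facet shifts recorded by $\tilde{\bb}$, so that translating by $C$ turns the shifted inequalities $-P_n x\le\tilde{\bb}$ back into honest nonnegativity constraints $P_n x'\ge 0$. The translate $P_{(\ba,\tilde{\bb})}+C$ is then exactly the Tesler polytope $\tes_n(\ba')$, where $\ba'=\ba+\bigl(\hs_i(\tilde{\bb})\bigr)_i$ is the hook-sum vector induced by the translation; the defining inequalities are precisely the statement that $\ba'$ has nonnegative entries in the relevant coordinates. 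Hence every point of the right-hand side is, up to translation, a Tesler polytope $\tes_n(\ba')$ with $\ba'\in\R_{\ge0}^n$, which is a deformation of $\tes_n(\ba_0)$ by Theorem \ref{main}; since translation preserves the normal fan, $(\ba,\tilde{\bb})$ lies in the deformation cone. This simultaneously yields $\supseteq$ and the claim that any deformation of $\tes_n(\ba_0)$ is a translate of a Tesler polytope.

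For the reverse inclusion $\subseteq$ it suffices to show that each of the $n-1$ inequalities is \emph{necessary}, and for this I would exhibit, for every $1\le i\le n-1$, a single edge $e_i$ of $\tes_n(\ba_0)$ whose wall-crossing inequality is exactly $\hs_i(\tilde{\bb})\ge -a_i$. Since any deformation vector must satisfy the wall-crossing inequality of every edge, in particular those of $e_1,\dots,e_{n-1}$, the deformation cone is contained in the set these $n-1$ inequalities cut out, which is precisely \eqref{eq:tesdefcone}. Concretely, I would locate each $e_i$ using the combinatorial description of the vertices of $\tes_n(\ba_0)$, identify the nonnegativity facets meeting along $e_i$, and compute the length functional, checking that it collapses to $a_i+\hs_i(\tilde{\bb})$. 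Combined with the previous paragraph, this gives equality in \eqref{eq:tesdefcone}.

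The main obstacle is the computation in the last step: selecting edges $e_i$ whose wall-crossing functionals are clean enough to read off as $\hs_i(\tilde{\bb})+a_i$, which requires a firm handle on the vertex--edge incidences of $\tes_n(\ba_0)$ and on how $\hs_i$ pairs against the facet normals surrounding each $e_i$. A secondary technical point, needed in the $\supseteq$ direction, is to verify that $\tU(n)$ is exactly the space of entrywise facet shifts modulo the translations fixing the hook-sum equations, so that a representative matrix $C$ always exists, and to confirm that the $n$-th hook-sum datum is either recoverable by translation or genuinely redundant, explaining why the index $i$ only ranges up to $n-1$.
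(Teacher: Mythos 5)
Your opening reduction --- that ``$(\ba,\tilde{\bb})$ lies in the deformation cone precisely when $P_{(\ba,\tilde{\bb})}$ is a genuine deformation of $\tes_n(\ba_0)$'' --- is not what Definition \ref{defdef} says, and this is the one genuine gap in your sufficiency direction. Membership of $(\ba,\tilde{\bb})$ in $\defcone_{(L_n,-P_n)}(\tes_n(\ba_0))$ additionally requires that the description $L_n\bm=\ba$, $-P_n\bm\le\tilde{\bb}$ be \emph{tight} (Remark \ref{rmk:tight}): the same polytope is cut out by many right-hand sides $\tilde{\bb}$, and only one of them is the deforming vector. So after you translate by $C$, identify $P_{(\ba,\tilde{\bb})}+C$ with a Tesler polytope $\tes_n(\ba')$, and invoke Theorem \ref{main} and Remark \ref{rmk:translation}, you have only shown that the \emph{polytope} is a deformation; you still must verify that each inequality $m_{i,j}\ge-\tilde{b}_{i,j}$ is attained on $P_{(\ba,\tilde{\bb})}$. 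The paper closes exactly this point with Lemma \ref{lem:tight}, exhibiting two explicit points of the Tesler polytope (a diagonal matrix and a ``superdiagonal'' one) that between them attain all facet equalities; tightness for your description then follows because your translation matches the two descriptions inequality by inequality. This slots into your argument with no structural change, but as written the $\supseteq$ inclusion does not land in the deformation cone as defined. Your ``secondary technical point'' about the $n$-th hook sum, by contrast, resolves just as you suspect: the $(n,n)$-entry of $C$ is unconstrained (since $m_{n,n}\ge 0$ is not a row of $-P_n$) and can be chosen so that the translate has $n$-th hook sum $0$, which is how the paper defines its lift $\bb\in\U(n)$ of $\tilde{\bb}$.

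Apart from this, your plan is sound, and it splits against the paper as follows. Your $\supseteq$ half (and the ``moreover'' clause) is essentially the paper's proof verbatim. For $\subseteq$ the paper does \emph{not} use edge inequalities: it evaluates the deformed vertex $\bw=\phi(I)$ at the identity vertex $I$ via Lemma \ref{vertexcorres} and reads off $w_{i,i}=\hs_i(\tilde{\bb})-\tilde{b}_{i,i}+a_i\ge-\tilde{b}_{i,i}$ from $\bw\in Q(\ba,\tilde{\bb})$, which yields all $n-1$ inequalities from a single vertex. Your wall-crossing route is logically fine, because you only need the easy (necessity) half of that characterization, and it is supplied by Lemmas \ref{combisoeq} and \ref{philem} already in the paper. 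Moreover, the computation you flag as the ``main obstacle'' is not an obstacle: for each $1\le i\le n-1$ take the edge of $\tes_n(\1)$ joining $I$ to the adjacent vertex obtained by moving the $1$ in row $i$ from $(i,i)$ to $(i,j)$ for any $j>i$ (Lemma \ref{adj}). By Lemma \ref{vertexcorres}, the $(i,j)$-coordinates of the deformed endpoints are $a_i+\hs_i(\tilde{\bb})-\tilde{b}_{i,j}$ and $-\tilde{b}_{i,j}$, while the original edge vector has $+1$ in that coordinate, so the rescaling factor is $r=a_i+\hs_i(\tilde{\bb})\ge 0$ --- exactly the desired inequality. This is the same local linear algebra as the paper's vertex computation, merely packaged along an edge; the paper's version is marginally more economical.
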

It is worth mentioning that the above description of the deformation cone of $\tes_n(\ba_0)$ is clearly \emph{minimal}, i.e., none of the inequalities is redundant. Therefore, each of the linear inequality defines a facet of the deformation cone. Furthermore, the face structure of the deformation cone of $\tes_n(\ba_0)$ is extremely easy to understand. This will allow us to give a complete characterization on which Tesler polytopes have the same normal fan (Theorem \ref{thm:gen}).

Recall that M\'{e}sz\'{a}ros et al \cite{Meszaros2017} establish that Tesler polytopes is a subfamily of flow polytopes. 
In the last part of this paper, we strengthen this connection by providing the following characterization on which flow polytopes are deformations of $\tes_{n}(\boldsymbol{1})$. (Please see Definitions \ref{defn:flow} and \ref{defn:critical} for the definitions of flow polytopes and critical position.)
\begin{thm} \label{charthm}
  Let $\ba_0 \in \R_{>0}^n$ and let $\ba=(a_{1},\dots,a_{n}) \in \R^{n}$ such that $\flow_n(\ba)$ is non-empty. Suppose $l$ is the critical position of $\ba$. 
Then the flow polytope $\flow_n(\ba)$ is a deformation of $\tes_{n}(\ba_0)$ if and only if $l=n$ or $a_i \ge 0$ for all $l+2 \le i \le n$. 
\end{thm}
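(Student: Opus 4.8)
The plan is to decide membership of $\flow_n(\ba)$ in the deformation cone computed in Theorem~\ref{thm:tesdefcone}. By Definition~\ref{defdef} and that theorem, $\flow_n(\ba)$ is a deformation of $\tes_n(\ba_0)$ exactly when its deformation vector with respect to $(L_n,-P_n)$ lies in the cone \eqref{eq:tesdefcone}: writing $\tilde b_{ij}:=-\min_{M\in\flow_n(\ba)}b_{ij}$ for the support value of $\flow_n(\ba)$ in the facet direction $-e_{ij}$, this amounts to checking that $\tilde\bb=(\tilde b_{ij})$ lies in $\tU(n)$ and that $\hs_i(\tilde\bb)\ge -a_i$ for every $1\le i\le n-1$. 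Equivalently, via the ``moreover'' clause of Theorem~\ref{thm:tesdefcone}, it amounts to deciding whether $\flow_n(\ba)$ is a translate $\tes_n(\bc)+\bt$ of some Tesler polytope with $\bc\in\R^n_{\ge0}$; since translating by $\bt$ shifts the hook sum vector by that of $\bt$, such a $\bt$ must vanish on every entry whose non-negativity is active on $\flow_n(\ba)$ and satisfy $\bc=\ba-(\text{hook sum of }\bt)\in\R^n_{\ge0}$. So the first task is to compute the minima $\min b_{ij}$, i.e.\ to determine which entries are forced positive on $\flow_n(\ba)$ and by how much, and then to evaluate the functionals $\hs_i$ on the resulting $\tilde\bb$.

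To organize this I would cut $\flow_n(\ba)$ along the zeros of the partial sums $s_m=a_1+\cdots+a_m$, which are all $\ge 0$ by non-emptiness. Wherever $s_m=0$ the cut separating $\{1,\dots,m\}$ from the remaining vertices carries no flow, so every entry meeting that cut --- including the diagonal entries $b_{rr}$ with $r\le m$ --- is identically $0$, and $\flow_n(\ba)$ is a Cartesian product of the flow polytopes on the blocks delimited by these zeros. On each block the internal partial sums stay strictly positive until the last one, which equals the block total $T\ge 0$; in particular the first entry of a block is $\ge 0$, and when $T=0$ all diagonal entries of the block vanish and force its final entry to be strictly negative. Reading off Definition~\ref{defn:critical}, the critical position is arranged so that every ``deep'' entry --- one lying in the third slot or later of its block --- is among $l+2,\dots,n$, while the remaining members of $\{l+2,\dots,n\}$ are first-slot entries, which are automatically non-negative, and $a_{l+1}$ sits at worst in a second slot. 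Hence ``$l=n$ or $a_i\ge 0$ for all $l+2\le i\le n$'' says exactly that no deep entry is negative.

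The heart of the proof is then the block-by-block computation of $\tilde\bb$ and the resulting dichotomy between deep and second slots. At a second-slot vertex $j=l+1$ there is a single within-block incoming edge $b_{l,l+1}$, and when $a_{l+1}<0$ the constraint $b_{l+1,l+1}\ge 0$ forces that edge up to $-a_{l+1}$; this positive minimum is recorded in $\tilde\bb$, and I expect it to be precisely what makes the inequality indexed by $l+1$ hold even though $a_{l+1}<0$, so that a negative second entry is absorbed by a translation along $b_{l,l+1}$ (the strict positivity of the partial sums keeping $\bc\ge 0$). At a deep vertex, by contrast, a negative $a_j$ produces a cyclic forced structure --- an edge direction (a short cycle through $j$) whose perpendicular is not covered by the normal fan of $\tes_n(\ba_0)$ --- that no Tesler polytope exhibits, so no admissible translation can raise $a_j$ to a non-negative $c_j$ and $\flow_n(\ba)$ fails to be a translate of any Tesler polytope. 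Carrying this out for every block should show that the system $\{\hs_i(\tilde\bb)\ge -a_i\}_{i\le n-1}$ together with $\tilde\bb\in\tU(n)$ is satisfiable precisely when no deep entry is negative, with $l=n$ (no deep positions) giving a deformation unconditionally.

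I expect the main obstacle to be the exact evaluation of the deformation vector: proving which entries $b_{ij}$ are forced positive on $\flow_n(\ba)$ and with what minima, and then checking the inequalities $\hs_i(\tilde\bb)\ge -a_i$ against those values. Both the ``forced'' and ``freely vanishing'' claims are existence statements about non-negative flows with prescribed zero coordinates, which I would settle by an explicit augmenting-path/min-cut construction driven by the strict positivity of each block's internal partial sums. A separate delicate point is the terminal entry $b_{nn}$: it is redundant (not facet-defining) for $\tes_n(\ba_0)$, hence governed by none of the inequalities indexed by $1\le i\le n-1$. When the last block has positive total and $a_n<0$ lies in a deep slot, $b_{nn}\ge 0$ becomes a genuine facet of $\flow_n(\ba)$ whose normal ray is absent from the normal fan of $\tes_n(\ba_0)$, and I would rule out the deformation directly by this new ray; when the last block has total $0$ the entry $a_n$ is instead captured through the forced edges feeding the inequalities with $i\le n-1$, and when $a_n\ge 0$ or $n$ is a second slot no obstruction arises.
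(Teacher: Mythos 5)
Your overall route coincides with the paper's: reduce to a normalized net flow vector by translating away the structure forced before the critical position, and then decide membership of the tight deforming vector $(\ba,\tilde{\bb})$, with $\tilde{b}_{i,j}=-\min_{\bm\in\flow_n(\ba)}m_{i,j}$, in the cone of Theorem \ref{thm:tesdefcone}. The block decomposition along the zeros of the partial sums is essentially equivalent to the paper's reduction (Proposition \ref{prop:flowtrans}), and your dichotomy ``negative second-slot entries are absorbed by a translation, negative deep entries obstruct'' is the right picture. However, there is a genuine gap in your treatment of the entry $m_{n,n}$, and it is not a removable technicality.

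Your test ``check $\hs_i(\tilde{\bb})\ge -a_i$ for $i\le n-1$'' is only meaningful if $\flow_n(\ba)$ can actually be written in the form $\{L_n\bm=\ba,\ -P_n\bm\le\tilde{\bb}\}$, i.e., if the inequality $m_{n,n}\ge 0$ (which is not among the rows of $-P_n$) is redundant. You address this only ``when the last block has positive total,'' and claim that when the last block has total $0$ the obstruction is ``captured through the forced edges feeding the inequalities with $i\le n-1$.'' That is false. Take $\ba=(1,1,-2)$, so $l=1$ and $a_3=a_n=-2$ is a deep negative entry with $a_1+a_2+a_3=0$. Every flow then has $m_{1,1}=m_{2,2}=m_{3,3}=0$, the polytope is the segment $(m_{1,1},m_{1,2},m_{1,3};m_{2,2},m_{2,3};m_{3,3})=(0,t,1-t;0,1+t;0)$ for $0\le t\le 1$, and the tight support vector is $\tilde{b}_{2,3}=-1$ with all other $\tilde{b}_{i,j}=0$. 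One computes $\hs_1(\tilde{\bb})=0\ge -1$ and $\hs_2(\tilde{\bb})=-1\ge -a_2=-1$: all your inequalities hold (the last with equality), so your test declares a deformation, contradicting the theorem. The actual obstruction is that $m_{3,3}\ge 0$ is not redundant (the point with $m_{1,1}=m_{2,2}=1$, $m_{3,3}=-2$ and zero off-diagonal entries satisfies every other constraint), so $\flow_3(1,1,-2)$ is not equal to $Q(\ba,\tilde{\bb})$ for any $\tilde{\bb}$ and hence cannot be a deformation; this is exactly the content of Remark \ref{rmk:redundant} and the $m=n$ case of Proposition \ref{prop:nondeform}, and the redundancy check must be run in the zero-total case as well. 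Two smaller points: your assertion that every member of $\{l+2,\dots,n\}$ is either deep or a first-slot entry is not literally true (a block after the critical one contributes a second slot at some position $\ge l+4$), though this is harmless since such a configuration forces an earlier negative deep entry; and for a deep negative $a_m$ with $m<n$, the forced minima only give $\hs_m(\tilde{\bb})\le -a_m$ a priori --- the strict inequality needed to leave the cone requires exhibiting two flows that distribute the incoming mass at $m$ differently (the paper's $\epsilon$-perturbation in the proof of Proposition \ref{prop:nondeform}), which your sketch does not yet supply.
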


\section*{Organization of the paper}
In Section \ref{preliminary}, we provide background on polyhedra theory, deformations of polytopes, and Tesler polytopes. 
In Section \ref{section:deformation}, we give a different proof for the known fact that for every fixed $\ba_0 \in \R_{>0}^n$, all Tesler polytopes $\tes_{n}(\ba)$ are deformations of $\tes_{n}(\ba_0)$, as well as provide a generalization of this fact in Theorem \ref{thm:gen}. In Section \ref{deftrans}, we determine the deformation cone of Tesler polytopes (Theorem \ref{defdef}), and then use it to prove Theorem \ref{thm:gen}. In Section \ref{flowsection}, we discuss which flow polytopes on the complete graph are deformations of $\tes_{n}(\ba_0)$ and complete the proof of Theorem \ref{charthm}.

\subsection*{Acknowledgements} The second author is partially supported by a grant from the Simons Foundation \#426756 and an NSF grant \#2153897-0.

We are grateful to the two anonymous referees for their valuable comments and suggestions. In particular, we are thankful for one of the referees who bring out attention to the result stated in Theorem \ref{thm:gen} which was not in a previous version of this article. 

\section{Preliminaries} \label{preliminary}

In this section, we provide preliminaries and background for this paper. We start with basic definitions for polytopes required for introducing the concepts of deformation cones. 
\subsection{Polytopes} 

A \emph{polyhedron} $P$ is a subset of $\R^{m}$ defined by finitely many linear inequalities \begin{equation}\label{leq} c_{i,1}x_{1}+\cdots+c_{i,k}x_{k} \leq d_{i} \text{ for } 1 \leq i \leq s
\end{equation}
where  $c_{i,j}$ and $d_{i,j}$ are real numbers. We call (\ref{leq}) a \emph{tight} inequality description for $P$ if for every $1 \leq i \leq s$, there exists a point $(x_{i,1},\dots,x_{i,k})$ in $P$ such that $c_{i,1}x_{i,1}+\cdots+c_{i,k}x_{i,k} = d_{i}$. Note that (\ref{leq}) can also be written as $\langle \bc_{i}, \bx \rangle \leq d_{i}$
where $\langle \cdot , \cdot \rangle$ is the dot product on $\R^{m}$.
If we let $C=(c_{i,j})$ and $\bd$ be the column vector $[d_{1},\dots,d_{l}]^{t}$ then the above system can be written in the following matrix form \[C\bx \leq \bd.\] 
A \emph{polytope} is a bounded polyhedron. Equivalently, a polytope $P \subset \R^{m}$ may be defined as the convex hull of finitely many points in $\R^{m}$. A polyhedron defined by homogeneous linear inequalities is called a \emph{(polyhedral) cone}. We assume that readers are familiar with the basic concepts related to polytopes, such as \emph{face} and \emph{dimension}, presented in \cite{barvinok2008integer, ziegler2012lectures}. There are many ways to choose a linear system to define a polytope. 

Let $P_{0} \subset \R^{m}$ be a polytope. We will use the following form to describe $P_{0}$:
\begin{equation} \label{expression}
  E\bx = \balpha_{0} \text{ and } G\bx \leq \bbeta_{0},
\end{equation}
where $E\bx = \balpha_{0}$ defines the affine span of $P_{0}$ and each inequality in $G\bx \leq \bbeta_{0}$ defines a facet of $P_{0}$. 
Any such a description is called a \emph{minimal} linear inequality description for $P_{0}$ if matrices $E$ and $G$ are minimal in size, i.e, none of the equalities or inequalities are redundant. Note that any minimal inequality description is tight. It is not hard to see that minimal inequality descriptions for a non full dimensional polytope $P_{0}$ are not unique. In particular, there are different choices for the matrices $E$ and $G$ in the expression (\ref{expression}). However, any choice of $E$ and $G$ must satisfy that the rows of $E$ are $m-d$ linearly independent vectors and the rows of $A$ are in bijection with the facets of $P_{0}$, where $d$ is the dimension of $P_{0}$. 

\subsection{Deformations}

In this part, we will start with the following definition of deformations initially introduced in \cite{castillo2017deformation}. 
As we mentioned earlier, this definition is equivalent to the normal fan definition of deformations given in the introduction.
Another different but equivalent definition of deformation will then be given in Lemma \ref{combisoeq}.
We finish this part with a discussion on the connection between deformation cones and type cones. 

\begin{defn} \label{defdef}
Let $P_0 \subset \R^{m}$ be a $d$-dimensional polytope, and suppose (\ref{expression}) is a minimal linear inequality description for $P_{0}$. Suppose $G$ has $k$ rows where $\bg_{i}$ is the $i$-th row vector of $G$. For each $i$, let $F_{i}$ be the facet of $P$ defined by $\langle \bg_{i}, \bx \rangle = b_{0,i}$ where $b_{0,i}$ is the $i$-th entry of $\bb_{0}$. 

A polytope $Q \subset \R^{m}$ is a \emph{deformation} of $P_0$, if there exists $\ba \in \R^{m-d}$ and $\bb \in \R^k$ such that the following two conditions are satisfied :
\begin{enumerate}
\item\label{D1} $Q$ is defined by the system of linear inequalities:
\begin{equation}\label{ineqdesc} E\bx = \ba \text{ and }G \bx \leq \bb ,\end{equation}
which is tight (but not necessarily minimal).
\item\label{itm:D2} For any vertex $\bv$ of $P_0$, if $F_{i_1}, F_{i_2}, . . . , F_{i_s}$
are the facets of $P_0$ where $\bv$ lies on, then the
intersection: \[Q \cap \{\bx \in \R^{m} : \langle \bg_{i_j}
, \bx \rangle = b_{i_j},1 \leq j \leq s \} \]
where $b_{i_j}$ is the $i_j$-th component of $\bb$, is a vertex of $Q$ denoted by $\bv_{(\ba, \bb)}$.  

\end{enumerate}
We call $(\ba,\bb)$ the \emph{deforming vector} for $Q$ with respect to ($E$,$G$). Furthermore, if $\bv \mapsto \bv_{(\ba,\bb)}$ gives a bijection from the vertex set of $P_0$ to the vertex set of $Q,$ we say $Q$ is a \emph{weak deformation} of $P_0$; otherwise, we say $Q$ is a \emph{strong deformation} of $P_0.$

The \emph{deformation cone} of $P_{0}$ with respect to ($E, G$), denoted by $\defcone_{(E,G)}({P_{0}})$, is the collection of all the deforming vectors $(\ba,\bb)$ with respect to ($E, G$). 
\end{defn}

Clearly, if we use a different minimal inequality description for $P_{0}$ with matrices $(E',G')$, we will obtain a different deformation cone $\defcone_{(E',G')}(P_{0})$. However, one can check that $\defcone_{(E',G')}(P_{0})$ can be obtained from $\defcone_{(E,G)}(P_{0})$ via an invertible linear transformation. In this paper, we will fix a minimal linear inequality description for the family of polytopes we consider, and thus we will sometimes omit the subscript $(E,G)$ and just write $\defcone(P_{0})$. 

\begin{rmk}\label{rmk:tight}

  With a fixed minimal inequality description for $P_{0}$, the tightness requirement of the description \eqref{ineqdesc} for $Q$ in Definition \ref{defdef} guarantees the uniqueness of the deforming vector $(\ba,\bb)$ for any deformation $Q$ of $P_0$, and establishes a one to one correspondence between deformations $Q$ of $P_{0}$ and deforming vectors $(\ba,\bb)$. 

  In particular, we want to emphasize that if $Q$ is a deformation of $P_0$, its corresponding deforming vector is the \emph{unique} vector $(\ba, \bb)$ such that the description \eqref{ineqdesc} is a \emph{tight} linear inequality description. 
\end{rmk}

\begin{rmk}\label{rmk:translation}
  Recall that a polytope $Q_1 \subset \R^m$ is a \emph{translation} of another polytope $Q_2 \subset \R^m$ if $Q_1 = Q_2 + \bt$ for some $\bt \in \R^m$. It is easy to see that if $Q$ is a deformation of $P$, then any translation of $Q$ is a deformation of $P$.
\end{rmk}

The following lemma provides an alternative definition of deformations that will be used in our proofs. We use $\vert(P)$ to denote the set of vertices of a polytope $P$:

\begin{lem}\cite{postnikov2008faces}\label{combisoeq}
  Let $P_{0}$ and $Q$ be two polytopes in $\R^{m}$. Then the following two statements are equivalent: 
\begin{enumerate}[label={\rm (\arabic*)}]
  \item 
    $Q$ is a deformation of $P_{0}$.
  \item\label{itm:phi} There exists a surjective map $\phi$ from $\vert(P_{0})$ to $\vert(Q)$ satisfying that for any adjacent vertices $\bv$ and $\bw$ of $P_{0}$, there exists $r_{\bv,\bw}\in \R_{\geq 0}$ such that $\phi(\bv)-\phi(\bw)=r_{\bv,\bw}(\bv-\bw)$.
\end{enumerate}

Moreover, suppose \ref{itm:phi} is true, then $Q$ is a {weak deformation} of $P_{0}$ if all $r_{\bv,\bw}$'s are positive and $Q$ is a {strong deformation} of $P_{0}$ if some of $r_{\bv,\bw}$'s are zero. 
\end{lem}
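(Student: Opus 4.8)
The plan is to prove the two implications separately, working throughout with the normal-fan characterization of deformations: since the text records that Definition \ref{defdef} is equivalent to the normal-fan definition (\cite[Proposition 2.6]{castillo2017deformation}), I may freely replace the statement ``$Q$ is a deformation of $P_0$'' by ``the normal fan $\mathcal N(P_0)$ refines $\mathcal N(Q)$.'' For a vertex $\bv$ of a polytope $P$ I write $N_\bv$ for its (maximal) normal cone $N_\bv=\{\bu : \bv \text{ maximizes } \langle\bu,\cdot\rangle \text{ on } P\}$, and I will use that $N_\bv$ is cut out by the edge directions at $\bv$, and that adjacent vertices $\bv,\bw$ meet $\mathcal N(P)$ in a wall $W=N_\bv\cap N_\bw$ with $\langle\bu,\bv-\bw\rangle=0$ for $\bu\in W$, so that $\lin W=(\bv-\bw)^{\perp}$.

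For the implication (1)$\Rightarrow$(2), refinement gives that each maximal cone $N_\bv$ of $\mathcal N(P_0)$ lies in a unique maximal cone of $\mathcal N(Q)$, namely $N_\bz$ for a unique vertex $\bz$ of $Q$; I set $\phi(\bv):=\bz$. Surjectivity is immediate because each maximal cone of $\mathcal N(Q)$ is a union of maximal cones of $\mathcal N(P_0)$ and hence contains some $N_\bv$. For the edge condition, let $\bv,\bw$ be adjacent with wall $W$. If $\phi(\bv)=\phi(\bw)$, take $r_{\bv,\bw}=0$. Otherwise any $\bu$ in the relative interior of $W$ lies in $N_{\phi(\bv)}\cap N_{\phi(\bw)}$, so $\langle\bu,\phi(\bv)-\phi(\bw)\rangle=0$ for all such $\bu$; hence $\phi(\bv)-\phi(\bw)\perp\lin W$, which is the line $\R(\bv-\bw)$, and so $\phi(\bv)-\phi(\bw)=r_{\bv,\bw}(\bv-\bw)$ for a scalar $r_{\bv,\bw}$. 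Testing against a functional in the interior of $N_\bv\subseteq N_{\phi(\bv)}$ forces $r_{\bv,\bw}\ge 0$ (indeed $>0$ in this case).

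For the implication (2)$\Rightarrow$(1), I will show directly that $N_\bv\subseteq N_{\phi(\bv)}$ for every vertex $\bv$ of $P_0$; since the cones $N_\bv$ cover the ambient space and each then sits inside a maximal cone of $\mathcal N(Q)$, this yields the desired refinement. Fix $\bu$ in the interior of $N_\bv$, so $\bv$ is the unique $\bu$-maximizer on $P_0$. Given any vertex $\bz$ of $Q$, surjectivity provides $\bw$ with $\phi(\bw)=\bz$, and there is a $\bu$-monotone edge path $\bw=\bw_0,\bw_1,\dots,\bw_k=\bv$ in $P_0$ with $\langle\bu,\bw_{i+1}-\bw_i\rangle>0$. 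Applying the edge condition to each step gives $\langle\bu,\phi(\bw_{i+1})-\phi(\bw_i)\rangle=r_{\bw_i,\bw_{i+1}}\langle\bu,\bw_{i+1}-\bw_i\rangle\ge0$, and summing telescopically yields $\langle\bu,\phi(\bv)\rangle\ge\langle\bu,\bz\rangle$. As $\bz$ was arbitrary, $\phi(\bv)$ maximizes $\bu$ on $Q$, i.e. $\bu\in N_{\phi(\bv)}$; letting $\bu$ range over the interior and taking closures gives $N_\bv\subseteq N_{\phi(\bv)}$. The ``moreover'' dichotomy then follows from this construction: all $r_{\bv,\bw}>0$ precisely when no edge is contracted, which is precisely when $\phi$ is injective and hence a bijection on vertices (weak deformation), whereas a vanishing $r_{\bv,\bw}$ collapses an edge and destroys injectivity (strong deformation).

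I expect the main obstacle to be the backward implication, and within it the point that $\phi(\bv)$ is not merely a local but a global $\bu$-maximizer on $Q$. The monotone-path argument is what bridges this gap, and it is essential that it invokes both hypotheses at once: the surjectivity of $\phi$ (to realize every vertex of $Q$ as some $\phi(\bw)$) and the non-negativity $r_{\bv,\bw}\ge0$ (to keep each step of the path non-decreasing in $\bu$). Care is also needed to justify that a $\bu$-monotone edge path exists between any two vertices—the standard fact underlying the simplex method—and, in the forward direction, that the codimension count genuinely pins $\phi(\bv)-\phi(\bw)$ to the line spanned by $\bv-\bw$ rather than merely to the hyperplane $\lin W$.
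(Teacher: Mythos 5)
The paper does not prove this lemma; it is imported from the cited reference, so there is no internal proof to compare against. Your argument is, however, essentially the standard proof of this equivalence (the one found in the Postnikov--Reiner--Williams appendix and in Castillo--Liu), routed through the normal-fan characterization of deformations, and I find it correct. The forward direction correctly extracts $\phi$ from the refinement and pins $\phi(\bv)-\phi(\bw)$ to the line $\R(\bv-\bw)$ by pairing against the linear span of the wall $N_{\bv}\cap N_{\bw}$; the backward direction's $\bu$-monotone edge-path argument is exactly the right mechanism for upgrading the local edge condition to the global statement that $\phi(\bv)$ maximizes $\bu$ over all of $Q$, and you correctly identify that it uses surjectivity and non-negativity simultaneously. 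Two points would deserve one more sentence each in a full write-up: (a) you conclude refinement from the containments $N_{\bv}\subseteq N_{\phi(\bv)}$, which for complete fans requires the (standard) observation that each maximal cone of $\mathcal{N}(Q)$ is then the union of the maximal cones of $\mathcal{N}(P_0)$ it contains, interiors being disjoint; and (b) in the ``moreover'' part, passing from ``all $r_{\bv,\bw}>0$'' to injectivity of $\phi$ on all of $\vert(P_0)$ (not merely on adjacent pairs) needs the wall-crossing argument you only hint at: if $\phi(\bv_1)=\phi(\bv_2)=\bz$ with $\bv_1\neq\bv_2$, a gallery of maximal cones of $\mathcal{N}(P_0)$ inside the convex cone $N_{\bz}$ joins $N_{\bv_1}$ to $N_{\bv_2}$, and each wall crossed corresponds to a contracted edge, forcing some $r=0$. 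Neither point is a gap in substance.
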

Recall that the direction of a non-zero vector $\bv$ is $\dfrac{\bv}{\sqrt{\langle\bv, \bv \rangle}}$. For any two adjacent vertices $\bv$ and $\bw$ of a polytope, we call the vector $\bw-\bv$ the \emph{edge vector} from $\bv$ to $\bw$, and the direction of $\bw-\bv$ the \emph{edge direction} from $\bv$ to $\bw$. It is not explicitly stated in the above lemma, but when $\bv$ and $\bw$ are adjacent vertices of $P_{0}$, the vertices $\phi(\bv)$ and $\phi(\bw)$ of $Q$ are either the same or adjacent. Moreover, when $\phi(\bv)$ and $\phi(\bw)$ are adjacent, the edge direction of $\phi(\bv)$ and $\phi(\bw)$ is the same as that of $\bv$ and $\bw$.

In \cite[Proposition 2.16]{castillo2017deformation}, the authors gave connection between Definition \ref{defdef} and the alternative definition of deformations provided by Lemma \ref{combisoeq} which we summarize in the following lemma. 
\begin{lem} \cite[Section 2]{castillo2017deformation} \label{philem}
  Let $Q$ be a deformation of $P_{0}$ with deforming vector $(\ba,\bb)$. Then, the map \[\phi:\vert(P_{0}) \rightarrow \vert(Q),\] 
  defined by $\phi(\bv)=\bv_{(\ba,\bb)}$ where $\bv_{(\ba,\bb)}$ is given as in Definition \ref{defdef}/\eqref{itm:D2}  satisfies the condition given in Lemma \ref{combisoeq}/\ref{itm:phi}.
\end{lem}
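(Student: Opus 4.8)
The plan is to verify the two requirements in Lemma~\ref{combisoeq}/\ref{itm:phi} for the map $\phi(\bv)=\bv_{(\ba,\bb)}$: that it carries edges of $P_0$ to parallel segments of $Q$ with a nonnegative stretching factor, and that it is surjective onto $\vert(Q)$. For a vertex $\bv$ of $P_0$ write $I(\bv)\subseteq\{1,\dots,k\}$ for the indices of the facets $F_i$ containing $\bv$. By Definition~\ref{defdef}/\eqref{itm:D2}, $\phi(\bv)$ is the (well-defined) vertex of $Q$ satisfying $E\bx=\ba$ together with $\langle\bg_i,\bx\rangle=b_i$ for every $i\in I(\bv)$; this active-constraint description of $\phi(\bv)$ is what drives the whole argument.

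First I would establish the edge condition. Let $\bv,\bw$ be adjacent vertices of $P_0$ and put $\bu=\bw-\bv$. The facets containing the edge $[\bv,\bw]$ are indexed by $I(\bv)\cap I(\bw)$, and since a face is the intersection of the facets containing it, the system $E\bx=\balpha_0$, $\langle\bg_i,\bx\rangle=b_{0,i}$ $(i\in I(\bv)\cap I(\bw))$ cuts out exactly the line through $\bv$ and $\bw$; thus its homogeneous part has solution space $\R\bu$. Replacing the right-hand sides by those of $Q$ leaves the coefficient matrix unchanged, so the corresponding system for $Q$ defines a line in the same direction $\bu$, and because $I(\bv)\cap I(\bw)$ lies in both $I(\bv)$ and $I(\bw)$, both $\phi(\bv)$ and $\phi(\bw)$ solve it. Hence $\phi(\bw)-\phi(\bv)=r_{\bv,\bw}\bu$ for some scalar $r_{\bv,\bw}$. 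To pin down its sign, choose $i_0\in I(\bv)\setminus I(\bw)$: in $P_0$ we have $\langle\bg_{i_0},\bw\rangle<b_{0,i_0}=\langle\bg_{i_0},\bv\rangle$, so $\langle\bg_{i_0},\bu\rangle<0$, while in $Q$ feasibility gives $\langle\bg_{i_0},\phi(\bw)\rangle\le b_{i_0}=\langle\bg_{i_0},\phi(\bv)\rangle$; combining, $r_{\bv,\bw}\langle\bg_{i_0},\bu\rangle\le 0$ forces $r_{\bv,\bw}\ge 0$.

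For surjectivity I would argue by linear-programming duality. Fix a vertex $\bq$ of $Q$; its normal cone is full dimensional in $\R^m$, so its interior is a nonempty open set and therefore meets the dense set of functionals having a unique maximizer over $P_0$. Pick such a $\bc$: it is maximized over $Q$ only at $\bq$ and over $P_0$ at a single vertex $\bv$. Complementary slackness for $\max\{\langle\bc,\bx\rangle: E\bx=\balpha_0,\ G\bx\le\bbeta_0\}$ yields $\bc=\sum_{i\in I(\bv)}\lambda_i\bg_i+E^{t}\mu$ with all $\lambda_i\ge 0$. Since $E\bx=\ba$ is constant on $Q$, for every $\bx\in Q$ we then have $\langle\bc,\bx\rangle=\sum_{i\in I(\bv)}\lambda_i\langle\bg_i,\bx\rangle+\mu^{t}\ba\le\sum_{i\in I(\bv)}\lambda_i b_i+\mu^{t}\ba$, with equality at $\bx=\phi(\bv)$ because $\langle\bg_i,\phi(\bv)\rangle=b_i$ for all $i\in I(\bv)$. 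Thus $\phi(\bv)$ maximizes $\langle\bc,\cdot\rangle$ over $Q$, and uniqueness of the maximizer gives $\phi(\bv)=\bq$.

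The main obstacle is the surjectivity step, and within it the genericity of $\bc$: one must know that the interior of the normal cone of $\bq$ is full dimensional (so that it cannot be swallowed by the finitely many lower-dimensional walls of the normal fan of $P_0$), which guarantees the existence of a functional that is simultaneously interior to $\bq$'s normal cone and in general position for $P_0$. By contrast, the parallelism of edges and the sign of $r_{\bv,\bw}$ fall out directly from the active-constraint description of $\phi$ in Definition~\ref{defdef}/\eqref{itm:D2} together with the feasibility of $Q$ under $G\bx\le\bb$.
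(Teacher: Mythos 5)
Your argument is correct, but note that the paper does not actually prove this lemma: it is quoted from \cite[Section 2]{castillo2017deformation} (the summary of their Proposition 2.16), so there is no in-paper proof to match. Your write-up is a legitimate self-contained substitute. The edge step is sound: the identification of the affine hull of the edge $[\bv,\bw]$ with the solution set of $E\bx=\balpha_0$, $\langle\bg_i,\bx\rangle=b_{0,i}$ for $i\in I(\bv)\cap I(\bw)$ uses the standard fact that the active facet normals at a face of $P_0$, together with the rows of $E$, span a space of dimension complementary to that face --- worth stating explicitly, since it is exactly what guarantees the solution set is a \emph{line} and not something larger; once that is in place, the equality of coefficient matrices gives parallelism, and your sign check via an index $i_0\in I(\bv)\setminus I(\bw)$ (which is nonempty because distinct vertices have incomparable active sets) correctly forces $r_{\bv,\bw}\ge 0$. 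The surjectivity step via a functional $\bc$ that is simultaneously interior to the normal cone of a given vertex $\bq$ of $Q$ and generic for $P_0$, followed by complementary slackness to place $\bc$ in the cone generated by $\{\bg_i\}_{i\in I(\bv)}$ and the row space of $E$, is also correct; the density/openness argument you flag as the main obstacle is standard (the non-generic functionals form a finite union of cones of dimension at most $m-1$). So the proposal stands as a valid proof of the cited statement.
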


\subsection*{Connection to type cones} 

  It is straightforward to verify that $Q$ is a weak deformation of $P_0$ if and only if $Q$ has the same normal fan as $P_0$. Thus, in this case, we also say that $Q$ and $P_0$ are \emph{normally equivalent}.
In \cite{McMullen1973}, McMullen introduced the concept of type cones of normally equivalent polytopes. We give its definition using the language developed in this paper.

\begin{defn}\label{defn:type}
  Assuming the hypothesis of $P_0$ given in Definition \ref{defdef}. The \emph{type cone} of $P_0$ with respect to $(E, G),$ denoted by $\typecone_{(E,G)}(P_0)$, is the collection of all the deforming vectors $(\ba,\bb)$ for weak deformations of $P_0$ with respect to ($E, G$). 
\end{defn}

Therefore, the type cone of $P_0$ containing the deformation vectors of all the polytopes that are normally equivalent to $P_0$. Even though McMullen \cite{McMullen1973} did not introduce the terminology ``deformation cone'', his analysis on type cones indicates the following connection between these two families of cones. 

\begin{lem}\label{lem:type}
The relative interior of the deformation cone $\defcone_{(E,G)}(P_0)$ of $P_0$ is exactly the type cone $\typecone_{(E,G)}(P_0)$ of $P_0$.

Furthermore, suppose $K$ is a face of the deformation cone $\defcone_{(E,G)}(P_0)$ of $P_0$, and $Q$ is a deformation of $P_0$ corresponding to a deforming vector in the relative interior of $K$. Then $K$ is exactly the deformation cone $\defcone_{(E,G)}(Q)$ of $Q$ and hence the relative interior of $K$ is the type cone $\typecone_{(E,G)}(Q)$ of $Q$. 
\end{lem}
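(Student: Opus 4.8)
The plan is to prove Lemma~\ref{lem:type} in two stages corresponding to its two assertions. First I would establish that the relative interior of $\defcone_{(E,G)}(P_0)$ equals $\typecone_{(E,G)}(P_0)$, and then bootstrap from this the statement about faces $K$ and the deformation cones of the polytopes $Q$ they parametrize.

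For the first stage, recall that by Definition~\ref{defn:type}, $\typecone_{(E,G)}(P_0)$ consists precisely of the deforming vectors of \emph{weak} deformations, i.e.\ those $Q$ for which $\bv \mapsto \bv_{(\ba,\bb)}$ is a bijection on vertex sets. By Lemma~\ref{philem} and Lemma~\ref{combisoeq}, a deformation $Q$ is weak exactly when all the constants $r_{\bv,\bw}$ in condition~\ref{itm:phi} are strictly positive, and strong when some $r_{\bv,\bw}=0$. The geometric content I want to extract is that moving a deforming vector $(\ba,\bb)$ strictly inside $\defcone_{(E,G)}(P_0)$ keeps all edge-contraction parameters positive, while on the boundary some $r_{\bv,\bw}$ is forced to vanish. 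The plan is to argue that the collapse of an edge (some $r_{\bv,\bw}=0$) is a closed condition cut out by an equality among the defining data of $(\ba,\bb)$, and that the defining inequalities of $\defcone_{(E,G)}(P_0)$ are exactly (nonnegative-combinations encoding) these $r_{\bv,\bw}\ge 0$ conditions. Concretely, each edge of $P_0$ corresponds to some facet set, and the requirement $r_{\bv,\bw}\ge 0$ translates into a linear inequality on $(\ba,\bb)$; the locus where that inequality is tight is a supporting hyperplane, hence lies in the relative boundary. Thus the relative interior is exactly the set where every $r_{\bv,\bw}>0$, which is $\typecone_{(E,G)}(P_0)$. This is essentially an unpacking of McMullen's analysis in \cite{McMullen1973} phrased through Lemma~\ref{combisoeq}.

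For the second stage, let $K$ be a face of $\defcone_{(E,G)}(P_0)$ and let $Q$ correspond to a deforming vector $(\ba,\bb)$ in the relative interior of $K$. The plan is to show $\defcone_{(E,G)}(Q)=K$ by a double containment. The relative interior of $K$ is characterized by exactly which of the facet-inequalities of $\defcone_{(E,G)}(P_0)$ are active; by the first stage, these active inequalities record precisely which edges of $P_0$ have collapsed in passing to $Q$, i.e.\ which vertices of $P_0$ get identified under $\phi$. This determines the combinatorial (face) structure of $Q$, hence its normal fan. Now a point $(\ba',\bb')$ lies in $\defcone_{(E,G)}(Q)$ iff the corresponding polytope $Q'$ is a deformation of $Q$; since deformation is transitive (a deformation of $Q$ is a deformation of $P_0$, as normal-fan refinement is transitive), $\defcone_{(E,G)}(Q)\subseteq \defcone_{(E,G)}(P_0)$, and the additional constraint is that $Q'$ respect the already-collapsed edges, i.e.\ that the same facet-inequalities stay active, which cuts out exactly the face $K$. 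Conversely any vector in $K$ gives a deformation of $P_0$ whose collapsed edges include those of $Q$, making it a deformation of $Q$, so it lies in $\defcone_{(E,G)}(Q)$. The final clause about type cones then follows by applying the first stage to $Q$ in place of $P_0$, identifying $\typecone_{(E,G)}(Q)$ with the relative interior of $\defcone_{(E,G)}(Q)=K$.

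The main obstacle I anticipate is the first stage: making rigorous the claim that the facet inequalities of $\defcone_{(E,G)}(P_0)$ correspond bijectively to the edge-nonnegativity conditions $r_{\bv,\bw}\ge 0$, and that tightness of a given inequality is equivalent to the vanishing of the associated $r_{\bv,\bw}$. One has to be careful because several edges may impose the \emph{same} inequality, and some inequalities defining the cone may be redundant, so the correspondence between ``active inequality'' and ``collapsed edge'' is not literally one-to-one on the nose. The cleanest route is probably to avoid choosing a minimal inequality description of the deformation cone and instead argue directly: characterize the relative interior as the set of $(\ba,\bb)$ for which the induced map $\phi$ is injective on vertices (equivalently all $r_{\bv,\bw}>0$), using that injectivity is an open condition and that the deformation cone is the closure of the weak-deformation locus. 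This sidesteps the bookkeeping of which specific inequalities are facets and reduces everything to the topological fact that weak deformations form a dense open subcone whose boundary is where some edge collapses.
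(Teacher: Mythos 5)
First, a point of comparison: the paper offers no proof of Lemma \ref{lem:type} at all --- it is stated as a consequence of McMullen's analysis and cited to \cite{McMullen1973} --- so there is no in-paper argument to match yours against step by step. That said, your outline is the standard route (essentially what \cite{McMullen1973}, \cite{castillo2017deformation} and \cite{PPP2023} carry out): each edge $\{\bv,\bw\}$ of $P_0$ determines a quantity $r_{\bv,\bw}$ that is a \emph{linear} function of $(\ba,\bb)$ on the linear span of the deformation cone, because $\bv_{(\ba,\bb)}$ is the solution of a fixed linear system whose right-hand side is read off linearly from $(\ba,\bb)$; the deformation cone is cut out inside that span by the inequalities $r_{\bv,\bw}\ge 0$; none of these functionals vanishes identically on the cone (the deforming vector $(\balpha_0,\bbeta_0)$ of $P_0$ itself makes them all positive), so the relative interior is exactly the all-strict locus, which by Lemmas \ref{combisoeq} and \ref{philem} is the type cone; and the faces are indexed by which edges collapse, which yields the second assertion once one observes that $Q'$ is a deformation of $Q$ precisely when the set of edges of $P_0$ collapsed in $Q'$ contains the set collapsed in $Q$. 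Your remark that several edges may impose the same inequality, or redundant ones, is correctly diagnosed as harmless: identifying the relative interior only requires knowing which inequalities are valid on the cone, not which are facet-defining.

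The one genuine gap is that the crux of your first stage is asserted rather than proved: the claim that ``the deformation cone is the closure of the weak-deformation locus'' (equivalently, that $\defcone_{(E,G)}(P_0)$ equals, within its span, the locus where all $r_{\bv,\bw}\ge 0$) \emph{is} the content of the lemma, and labelling it ``the topological fact'' does not discharge it. A concrete way to close it: deforming vectors are additive under Minkowski sums (support functions add, and the tight right-hand side in the direction $\bg_i$ is the support function value there), and if $Q$ is a deformation of $P_0$ then $Q+\epsilon P_0$ is normally equivalent to $P_0$ for every $\epsilon>0$; hence $(\ba,\bb)+\epsilon(\balpha_0,\bbeta_0)$ lies in the type cone, so every point of the deformation cone is a limit of type-cone points, while the type cone is relatively open because a sufficiently small perturbation of $(\ba,\bb)$ within the span moves each $\bv_{(\ba,\bb)}$ continuously and cannot create new incidences when all $r_{\bv,\bw}>0$. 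You should also address a definitional wrinkle in the second stage: when $Q$ is a strong deformation, $(E,G)$ with right-hand side $(\ba,\bb)$ is a tight but non-minimal description of $Q$, so Definition \ref{defdef} does not literally define $\defcone_{(E,G)}(Q)$ and one must adopt the natural extension (tight right-hand sides for which the vertex condition holds at each vertex of $Q$); with that convention your double containment goes through.
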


The following immediate consequence of the above lemma will be useful to us.
\begin{cor}\label{cor:type}
For $i=1,2$, suppose $Q_i$ is a deformation of $P_0$ with the deforming vector $(\ba_i, \bb_i)$, and suppose $K_i$ is the (unique) face of $\defcone_{(E,G)}(P_0)$ such that $(\ba_i, \bb_i)$ is in the relative interior of $K_i$. Then $Q_1$ is a deformation of $Q_2$ if and only if $K_1$ is a face of $K_2$. Moreover, $Q_1$ is normally equivalent to $Q_2$ if and only if $K_1 = K_2$. 
\end{cor}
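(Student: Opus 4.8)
The plan is to derive both equivalences directly from Lemma \ref{lem:type}, together with two elementary facts about the face lattice of a polyhedral cone: (i) every point of the cone lies in the relative interior of a \emph{unique} face, and (ii) for two faces $K_1,K_2$ of the cone one has $\mathrm{relint}(K_1)\cap K_2\neq\emptyset$ if and only if $K_1\subseteq K_2$, which for faces is the same as saying $K_1$ is a face of $K_2$. Fact (ii) holds because the minimal face containing a point $(\ba_1,\bb_1)$ is precisely the face in whose relative interior it lies, and any face containing that point must contain the minimal one.

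First I would record the translation between ``being a deformation'' and ``membership in a deformation cone.'' Because we work with the fixed pair $(E,G)$ throughout, the deforming vector of a polytope is intrinsic to that polytope by the uniqueness in Remark \ref{rmk:tight}: it is the unique $(\ba,\bb)$ making $E\bx=\ba,\ G\bx\le\bb$ a tight description. Consequently the deforming vector of $Q_1$ viewed as a deformation of $Q_2$ is the same $(\ba_1,\bb_1)$ as when $Q_1$ is viewed as a deformation of $P_0$, and therefore
\[
 Q_1 \text{ is a deformation of } Q_2 \iff (\ba_1,\bb_1)\in \defcone_{(E,G)}(Q_2).
\]
Applying the ``furthermore'' part of Lemma \ref{lem:type} to $K_2$ (a face of $\defcone_{(E,G)}(P_0)$ whose relative interior contains the deforming vector $(\ba_2,\bb_2)$ of $Q_2$) gives $\defcone_{(E,G)}(Q_2)=K_2$ and $\typecone_{(E,G)}(Q_2)=\mathrm{relint}(K_2)$; the same statements hold with index $1$.

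Next I would prove the two equivalences. Combining the displayed line with $\defcone_{(E,G)}(Q_2)=K_2$, the condition ``$Q_1$ is a deformation of $Q_2$'' becomes ``$(\ba_1,\bb_1)\in K_2$.'' Since $(\ba_1,\bb_1)\in\mathrm{relint}(K_1)$ by the definition of $K_1$, fact (ii) turns this into ``$K_1$ is a face of $K_2$,'' which is the first assertion. For the ``moreover'' part, $Q_1$ is normally equivalent to $Q_2$ exactly when $Q_1$ is a weak deformation of $Q_2$, i.e. when $(\ba_1,\bb_1)\in\typecone_{(E,G)}(Q_2)=\mathrm{relint}(K_2)$; by fact (i) and $(\ba_1,\bb_1)\in\mathrm{relint}(K_1)$, this happens if and only if $K_1=K_2$.

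Since all the substance is packaged in Lemma \ref{lem:type}, I do not expect a genuine obstacle; the argument is bookkeeping. The one point deserving care — and the step I would state most explicitly — is the claim that the deforming vector of $Q_1$ is unchanged when the base polytope is switched from $P_0$ to $Q_2$. This is exactly where the fixed description $(E,G)$ and the uniqueness of Remark \ref{rmk:tight} are essential, and it is also what makes $\defcone_{(E,G)}(Q_2)$ (with $(E,G)$ possibly non-minimal for $Q_2$) the correct object to compare $(\ba_1,\bb_1)$ against.
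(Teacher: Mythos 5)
Your argument is correct and matches the paper's intent: the paper states this corollary as an immediate consequence of Lemma \ref{lem:type} without further proof, and your write-up simply makes explicit the bookkeeping (uniqueness of the deforming vector via Remark \ref{rmk:tight}, plus the standard facts that a point of a cone lies in the relative interior of a unique face and that $\mathrm{relint}(K_1)\cap K_2\neq\emptyset$ iff $K_1$ is a face of $K_2$) that the authors leave implicit. No gap.
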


\subsection{Tesler polytopes} \label{subsec:tesler} 

In this part, we formally introduce Tesler polytopes and provide necessary setup and notations to study the deformation cones of Tesler polytopes. 
We start by setting up the space that Tesler polytopes live in. Recall that the directed complete graph $\vec{K}_{n+1}$ is the graph on the vertex set $[n+1]=\{1,2,\dots,n+1\}$ in which there is an edge from the vertex $i$ to the vertex $j$ for all $1 \leq i < j \leq n+1$. For convenience, we use 
\[E_{n+1}:=\{(i,j)~~|~~1 \leq i < j \leq n+1\}\] 
to represent the edge set of $\vec{K}_{n+1}$. Let $\U(n)=\R^{E_{n+1}}$. We will represent the elements $\bm \in \U(n)$ as upper triangular matrices $(m_{i,j})$ in the following way: we use $m_{i,i}$ to denote the coordinate of $\bm$ corresponding to the edge $(i,n+1)$ for each $1 \leq i \leq n$, and $m_{i,j}$ to denote the coordinate of $\bm$ corresponding to the edge $(i,j)$ for each pair $1 \leq i < j \leq n$. Then for any $1 \leq i \leq n$, we define the \emph{$i$-th hook sum} of $\bm \in \U(n)$ to be \[\hs_{i}(\bm):=m_{i,i}+\sum_{i<j} m_{i,j} -\sum_{j<i}m_{j,i}\] and  the \emph{hook sum vector} of $\bm$ to be $\hs(\bm)=(\hs_{1}(\bm),\dots,\hs_{n}(\bm))$. 
\begin{ex}\label{ex:matrix}
Let $\bm=\begin{bmatrix}
1 & {\color{blue}2} & 3 \\
 & {\color{red}4} & {\color{red}5} \\
 & &10 \\
\end{bmatrix}$. The second hook sum of $\bm$ is $\hs_{2}(\bm)={\color{red}4}+{\color{red}5}-{\color{blue}2}$=7. One can see that the elements involved in $\hs_{2}(\bm)$ forms a hook in $\bm$. We can similarly compute the other two hook sums of $\bm$ and the hook sum vector of $\bm$ is $\hs(\bm)=(6,7,2)$
\end{ex}

For any $\ba \in \R_{\geq 0}^{n}$, we define the \emph{Tesler polytope of hook sum $\ba$} to be
\begin{equation}\label{eq:defntesler}
  \tes_{n}(\ba) := \biggl{\{}\bm \in \U(n)~~\biggl{|}
  \begin{array}{c}
    \Hs(\bm)=\ba,     \\[2mm]
   m_{i,j} \geq 0  \text{ for }1 \le i \le j \le n
  \end{array}  \biggl{\}}.
\end{equation}  
One sees that the condition $m_{n,n} \ge 0$ is implied by the conditions that $m_{i,n} \ge 0$ for each $1 \le i \le n-1$ and that the $n$-th hook sum $a_n$ is non-negative. Hence, the inequality $m_{n,n}\geq0$ is redundant in the above description. Therefore, we use the following inequality description as the definition of $\tes_{n}(\ba)$:
\begin{equation} \label{teslerdef}
    \tes_{n}(\ba) := \biggl{\{}\bm=(m_{i,j}) \in \U(n)~~\biggl{|}\begin{array}{c}
    \Hs(\bm)=\ba,     \\[2mm]
    m_{i,j} \geq 0  \text{ for }1 \le i \le j \le n \text{ where }(i,j) \neq (n,n)
  \end{array}  \biggl{\}}. 
\end{equation}

\subsubsection{Notations for Theorem \ref{thm:tesdefcone}} \label{subsubsec:defthmnota}
In order to describe deformation cones of Tesler polytopes (in Theorem \ref{thm:tesdefcone}), we need to express Tesler polytopes in the form of (\ref{expression}). Note that $\bm \mapsto \Hs(\bm)$ is a linear transformation from $\U(n)$ to $\R^{n}$. Let $L_{n}$ be the matrix associated with this linear transformation. More precisely, we can describe $L_{n}$ in the following way: $L_{n}$ can be obtained from removing the last row of the incidence matrix\footnote{\emph{Incidence matrix} of a directed graph on $n+1$ nodes \{1, 2, \dots, n+1\} is a matrix $\rm{B}$$=(b_{i,e})$ where $b_{i,e}=1$ if the vertex $i$ is the initial vertex of the edge $e$, $b_{i,e}=-1$ if the vertex $i$ is the terminal vertex, and $b_{i,e}=0$ otherwise.} of $\vec{K}_{n+1}$. The columns of $L_{n}$ are indexed by $E_{n+1}$. Hence, the row vectors of $L_{n}$ can be considered to be in $\U(n)$. For any $\bm \in \U(n)$, we consider $L_{n}\bm$ to be the vector whose $i$-th entry is the dot product of the $i$-th row of $L_{n}$ and $\bm$. With this notation, one can check that $\Hs(\bm)=L_{n}\bm$. Similarly, let $\psi$ be the projection that deletes $m_{n,n}$ for all $\bm=(m_{i,j}) \in \U(n)$ and let $P_{n}$ be the matrix associated with the map $\psi$. Then the inequality description in (\ref{teslerdef}) can be written as $\psi(\bm)\geq \0$ or equivalently, $-P_{n}\bm \leq \0$. Combining these together, we obtain a matrix description for $\tes_{n}(\ba)$: 
\[\tes_{n}(\ba)=\{\bm \in \U(n)~~|~~L_{n}\bm=\ba \text{  and  } -P_{n}\bm \leq \boldsymbol{0}\},\]
It was shown in \cite[Lemma 4.2]{tesler-pos-bv} that for all $\ba=(a_{1},\dots,a_{n}) \in \R_{\geq 0}^{n}$ with $a_{1}>0$, we have that $\tes_{n}(\ba)$ is a full dimensional polytope in the subspace of $\U(n)$ defined by $L_{n}\bm=\ba$. Furthermore, by \cite[Corollary 4.6]{tesler-pos-bv} that for all $\ba \in \R_{>0}^n$, we have that each inequality in $-P_{n}\bm\leq \boldsymbol{0}$ defines a facet of $\tes_n(\ba).$ Therefore, for $\ba_{0} \in \R_{>0}^n$, the matrix description
\begin{equation} \label{mintes}
    L_{n}\bm=\ba_{0} \text{ and }-P_{n}\bm\leq \boldsymbol{0}
\end{equation} is a minimal linear inequality description for $\tes_{n}(\ba_{0})$. 
Hence, The deformation cone of $\tes_n(\ba_0)$ studied in this paper and described in Theorem \ref{thm:tesdefcone} is with respect to $(L_n, -P_n)$.

Clearly, each deforming vector $(\ba,\tilde{\bb})$ lives in $\R^{n}\times \tU(n)$, where $\tU(n)$ is the image of $\U(n)$ under the projection $\psi$. 
One sees that the concept of $i$-th hook sum of a matrix in $\tU(n)$ still makes sense as long as $i \neq n$. Therefore, by abusing the notation, for any $1 \leq i \leq n-1$, we define the \emph{$i$-th hook sum} of $\tilde{\bb}=(\tilde{b}_{i,j}) \in \tU(n)$  to be \[\hs_{i}(\tilde{\bb}):=\tilde{b}_{i,i}+\sum_{i<j} \tilde{b}_{i,j} -\sum_{j<i}\tilde{b}_{j,i}.\]

\subsection{Prior work on Tesler polytopes}

In this part, we review definitions and results related to Tesler polytopes given in \cite{Meszaros2017} and \cite{tesler-pos-bv} that are relevant to this paper. Because we treat $\bm \in \U(n)$ as an upper triangular matrix, we will talk about the $i$-th row or $i$-th column of $\bm$.

For any positive integer $n$ and $\ba \in \Z_{\geq 0}^n$, M\'{e}sz\'{a}ros, Morales and Rhoades \cite{Meszaros2017} gave the characterization for the face poset of $\tes_{n}(\ba)$ using the concept of support. 
Their characterization can be easily generalized to any $\ba \in \R_{\geq 0}^{n}$ by the same proof.

\begin{defn}
For any matrix $\bm$ in $\U(n)$, define the \emph{support} of $\bm$, denoted by $\supp(\bm)$, to be the matrix $(s_{i,j}) \in \U(n)$,
\[\text{ where }s_{i,j}=
\begin{cases}
	1, & \text{if the $(i,j)$-th entry of $\bm$ is not zero,} \\
0, & \text{otherwise.} 
\end{cases}\]
Let $\bm$ and $\bm'$ be two matrices in $\U(n)$. We write $\supp(\bm) \leq \supp(\bm')$, if $\supp(\bm)=(a_{i,j})$ and $\supp(\bm')=(b_{i,j})$ satisfying $a_{i,j} \leq b_{i,j}$ for any $1 \leq i\leq j \leq n$. 
\end{defn}

For $1 \leq i \leq j \leq n$, let 
$H_{i,j}^n$ be the hyperplane consisting of $\bm \in \U(n)$ whose $(i,j)$-th entry is $0,$ that is,
\begin{equation} \label{hyperplane}
H_{i,j}^{n}:=\{\bm=(m_{l,k}) \in \U(n)~~|~~m_{i,j}=0\}. 
\end{equation}
We say the intersection $H_{i_{1},j_{1}}^{n} \cap \cdots \cap H_{i_{k},j_{k}}^{n}$ \emph{does not make any zero rows} if the intersection is not contained in $H_{i,i}^{n} \cap H_{i,i+1}^{n} \cap \cdots \cap H_{i,n}^{n}$ for any $1 \leq i \leq n$.
\begin{thm} \cite[Lemma 2.4]{Meszaros2017}\label{Tesler} 
Suppose $\ba \in \R_{\geq 0}^n$ and let $\bv \in \tes_{n}(\ba)$. Then $\bv$ is a vertex of $\tes_{n}(\ba)$ if and only if $\supp(\bv)$ has at most one $1$ on each row. In particular, when $\ba \in \R_{>0}^{n}$, we have that $\bv$ is a vertex in $\tes_{n}(\ba)$ if and only if each row of $\supp(\bv)$ has exactly one $1$.

\end{thm}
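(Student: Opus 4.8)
The plan is to translate the vertex condition into a statement about $\ker L_n$ and then read off the combinatorics from the support graph. Write $S = \supp(\bv)$ and $S' = S \cup \{(n,n)\}$. First I would observe that $\bv$ fails to be a vertex precisely when there is a nonzero direction $\bx$ with both $\bv + \bx$ and $\bv - \bx$ in $\tes_n(\ba)$, and that such a $\bx$ exists iff there is a nonzero $\bx \in \ker L_n$ with $\supp(\bx) \subseteq S'$ (rescaling if necessary). The position $(n,n)$ enters because its non-negativity is redundant (not a facet): any point satisfying $\hs_n = a_n$ and $m_{j,n} \ge 0$ for $j<n$ has $m_{n,n} \ge a_n \ge 0$ automatically, so $\bx$ is unconstrained there. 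Next I would identify $\ker L_n$ with the cycle space of $\vec{K}_{n+1}$: $L_n\bx = \0$ means $\bx$ conserves flow at the vertices $1, \dots, n$, and since the total net flow is always $0$ it conserves flow at $n+1$ as well. Viewing $S'$ as a subgraph $G$ of $\vec{K}_{n+1}$ (the entry $(i,j)$ with $i<j$ being the edge $(i,j)$, and the diagonal entry $(i,i)$ the edge $(i,n+1)$), this yields the clean criterion: $\bv$ is a vertex iff $G$ is a forest.

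For the implication ``at most one $1$ per row $\Rightarrow$ vertex'' I would show directly that the only $\bx \in \ker L_n$ supported on $S'$ is $\0$, by induction on the row index. Since $\hs_1(\bx) = x_{1,1} + \sum_{j>1}x_{1,j}$ and row $1$ of $S'$ holds at most one position, its single surviving term must vanish, so row $1$ of $\bx$ is zero; substituting into $\hs_2(\bx)$, whose subtraction terms now vanish, kills row $2$, and so on. This direction uses no positivity of $\ba$. Equivalently, at most one $1$ per row says every vertex of $G$ in $\{1,\dots,n\}$ has out-degree at most $1$, and such a graph is a forest because the smallest-indexed vertex of any cycle would be the tail of both of its cycle edges.

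The reverse implication is the crux, and feasibility is indispensable here: being a forest is genuinely weaker than having at most one $1$ per row, and only $\ba \ge \0$ closes the gap. The engine is a flow-balance observation: if row $k$ with $k \le n-1$ is entirely zero, then vertex $k$ has no out-edge, so $a_k = \hs_k(\bv) = -(\text{in-flow at }k) \le 0$, forcing $a_k = 0$ and leaving $k$ isolated in $G$ (it has no in-edge either). Hence every non-isolated vertex of $G$ in $\{1, \dots, n\}$ has out-degree at least $1$. Now suppose some row $i$ carries two nonzero entries, i.e.\ vertex $i$ has out-degree $\ge 2$. Letting $V'$ be the non-isolated vertices (so $n+1 \in V'$) and counting edges by their tails gives $|E(G)| = \sum_{k=1}^n d_k \ge |V'|$, where $d_k$ is the out-degree of $k$ (each of the $|V'|-1$ non-sink vertices of $V'$ contributes at least $1$, and $i$ at least $2$); this violates the forest bound $|E(G)| \le |V'| - 1$, so $G$ contains a cycle. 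The signed indicator vector of that cycle is a nonzero element of $\ker L_n$ supported on $S'$, so $\bv$ is not a vertex, which is the desired contrapositive.

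For the ``in particular'' statement with $\ba \in \R_{>0}^n$, the same flow-balance observation forbids empty rows: an empty row $k \le n-1$ would force $a_k = 0$, contradicting $a_k > 0$, while the last row satisfies $v_{n,n} \ge a_n > 0$. So every row carries at least one nonzero, which together with the main equivalence gives exactly one $1$ per row. I expect the reverse direction to be the main obstacle --- specifically, converting ``two nonzeros in one row'' into a genuine cyclic perturbation. The naive move of perturbing only those two entries fails, since it unbalances the hook sums at their two column indices; producing an honest circulation requires the global forest/counting argument, and this is the unique place where the hypothesis $\ba \ge \0$ is used.
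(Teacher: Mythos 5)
The paper does not actually prove this statement: it is imported verbatim from \cite[Lemma 2.4]{Meszaros2017} with the remark that the characterization extends from $\Z_{\ge 0}^n$ to $\R_{\ge 0}^n$ ``by the same proof,'' so there is no in-paper argument to compare against. Judged on its own, your proof is correct and complete. The reduction of ``vertex'' to ``no nonzero element of $\ker L_n$ supported on $\supp(\bv)\cup\{(n,n)\}$'' is sound, including the delicate point about the $(n,n)$ entry: since $m_{n,n}\ge 0$ is implied by the remaining constraints, the perturbation $\bv\pm\epsilon\bx$ stays in $\tes_n(\ba)$ even though $x_{n,n}$ is a priori unconstrained. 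The identification of $\ker L_n$ with the cycle space of $\vec{K}_{n+1}$ gives the standard flow-polytope criterion (vertex iff the support graph is a forest), and your two directions are both right: the row-by-row elimination for ``at most one per row $\Rightarrow$ vertex'' needs no hypothesis on $\ba$, while the converse correctly isolates where $\ba\ge\0$ enters --- a zero row $k\le n-1$ forces $a_k\le 0$, hence $a_k=0$ and no in-edges at $k$, so zero rows are isolated vertices and the out-degree count $|E(G)|\ge|V'|$ contradicts the forest bound $|E(G)|\le|V'|-1$. Your closing observation that the naive two-entry perturbation fails and a genuine circulation is needed is exactly the right diagnosis; this is essentially the same route as the original M\'esz\'aros--Morales--Rhoades argument, so nothing is lost relative to the cited source.
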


\begin{lem}\label{lem:samev}\cite[Lemmas 2.1 and 2.4]{Meszaros2017}
Let $\ba \in \R_{\geq 0}^n$, and $\bv$ be a vertex of $\tes_n(\ba).$ If $\bu \in \tes_n(\ba)$ satisfying $\supp(\bu) \le \supp(\bv)$, then $\bu = \bv.$
\end{lem}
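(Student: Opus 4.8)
The plan is to reduce to showing that a suitable difference matrix vanishes, and then to exploit the fact that the hook-sum equations become triangular when read from the top row downward. Set $\bw := \bv - \bu \in \U(n)$. Since $\Hs$ is linear and both $\bu$ and $\bv$ lie in $\tes_n(\ba)$, we have $\Hs(\bw) = \Hs(\bv) - \Hs(\bu) = \ba - \ba = \0$. Next I would observe that $\supp(\bw) \le \supp(\bv)$: whenever the $(i,j)$-entry of $\bv$ is zero, the hypothesis $\supp(\bu) \le \supp(\bv)$ forces the $(i,j)$-entry of $\bu$ to be zero as well, so the $(i,j)$-entry of $\bw$ is zero. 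Combining this with Theorem \ref{Tesler}, which says that each row of $\supp(\bv)$ carries at most one $1$, it follows that each row of $\bw$ has at most one nonzero entry. Thus the lemma reduces to the claim that any $\bw \in \U(n)$ with $\Hs(\bw) = \0$ and at most one nonzero entry per row must be $\0$.

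To prove this reduced claim I would induct on the row index $i$ from $i=1$ to $i=n$, showing that row $i$ of $\bw$ is identically zero. The engine is the single scalar equation $\hs_i(\bw) = 0$, which reads
\[ \Big( w_{i,i} + \sum_{i<j} w_{i,j} \Big) - \sum_{j<i} w_{j,i} = 0. \]
The first parenthesized group is exactly the sum of the entries of row $i$, while the subtracted term $\sum_{j<i} w_{j,i}$ involves only entries sitting in rows $1, \dots, i-1$ (namely the column-$i$ entries above the diagonal). By the induction hypothesis those rows are already zero, so the subtracted term vanishes and the row-$i$ sum is forced to equal $0$. Since row $i$ contains at most one nonzero entry, that lone entry must be $0$, i.e.\ row $i$ of $\bw$ is zero. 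The base case $i=1$ is immediate because the subtracted sum is empty. This completes the induction, giving $\bw = \0$ and hence $\bu = \bv$.

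I do not anticipate a serious obstacle here; the proof is essentially a triangular back-substitution. The one point that requires care — and the step I would be most careful to state correctly — is the bookkeeping that identifies the ``inflow'' term $\sum_{j<i} w_{j,i}$ of the hook sum at vertex $i$ with entries lying strictly above row $i$. This is precisely what makes the linear system defined by $\Hs(\bw) = \0$ lower-triangular in the row order $1, 2, \dots, n$, and it is what lets the at-most-one-nonzero-per-row condition collapse each equation to a statement about a single unknown. (Equivalently, one may phrase the whole argument directly in terms of $\bu$ and $\bv$ without forming $\bw$, comparing the two matrices row by row and using that their common inflow at row $i$ forces equal row sums and hence equal single entries; forming $\bw$ merely streamlines the exposition via linearity.)
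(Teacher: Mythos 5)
Your argument is correct and complete: passing to $\bw=\bv-\bu$, noting $\supp(\bw)\le\supp(\bv)$ has at most one nonzero per row by Theorem \ref{Tesler}, and back-substituting through the triangular system $\Hs(\bw)=\0$ row by row is exactly the mechanism the paper relies on elsewhere (the paper cites \cite{Meszaros2017} for this lemma rather than proving it, but the same ``first $i$ rows of the support plus the hook sums determine the first $i$ rows'' observation appears verbatim in its proofs of Lemma \ref{lem:adj}\ref{itm:rowsame} and Lemma \ref{welldef}). No gaps; the one subtlety you flag --- that the inflow term $\sum_{j<i}w_{j,i}$ lives entirely in rows $1,\dots,i-1$ --- is indeed the crux, and you handle it correctly.
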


Recall that two vertices of a polytope are said to be \emph{adjacent} if they are connected by an edge. The last two results of \cite{Meszaros2017} we include here are a characterization for adjacent vertices of $\tes_{n}(\ba)$ and a characterization for a specific type of vertex.

\begin{lem} \cite[Theorem 2.7]{Meszaros2017}\label{adj} 
Let $\ba \in \R_{\geq 0}^{n}$. Two vertices $\bv$ and $\bw$ of $\tes_{n}(\ba)$ are adjacent if and only if for every $1 \leq k \leq n$, the $k$-th row of $\supp(\bw)$ can be obtained from the $k$-th row of $\supp(\bv)$ by exactly one of the following operations:
    \begin{enumerate}[label={\rm (O\arabic*)}]
        \item Leaving the $k$-th row of $\supp(\bv)$ unchanged.
        \item Changing the unique $1$ in $k$-th row of $\supp(\bv)$ to $0$.
	\item\label{itm:o3} Changing a single $0$ in the $k$-th row to a $1$ (if $k$-th row of $\supp(\bv)$ is a zero row).
	\item\label{itm:o4} Moving the unique $1$ in the $k$-th row of $\supp(\bv)$ to a different position in the $k$-th row (this operation must take place in precisely one row of $\supp(\bv)$).
    \end{enumerate}
    In particular, when $\ba \in \R_{>0}^n$, two vertices $\bv,\bw$ are adjacent if and only if there exists a unique $k: 1 \le k \le n$ such that $supp(\bw)$ is obtained from $supp(\bv)$ by moving $1$ on the $k$-th row to a different place on the same row.
\end{lem}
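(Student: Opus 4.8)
The plan is to translate the statement into the geometry of faces and then into a purely graph-theoretic count on the support graph. Throughout, write $S=\supp(\bv)\cup\supp(\bw)$ and let $G_S$ be the subgraph of $\vec{K}_{n+1}$ consisting of the edges recorded by $S$, where the diagonal entry $m_{k,k}$ is read as the edge from $k$ to the sink $n+1$; write $F_v$ for the subgraph coming from $\supp(\bv)$ alone. First I would record the elementary observation that, since both $\supp(\bv)$ and $\supp(\bw)$ have at most one $1$ in each row by Theorem \ref{Tesler}, comparing the $k$-th rows of $\supp(\bv)$ and $\supp(\bw)$ can only result in one of (O1)--(O4); thus the content of the lemma is entirely about \emph{how many} rows undergo (O4). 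Write $d$ for this number, equivalently the number of nodes of out-degree $2$ in $G_S$.

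Next I would set up the face-theoretic reduction. The midpoint $\tfrac12(\bv+\bw)$ lies in $\tes_n(\ba)$ and has support exactly $S$, so the smallest face of $\tes_n(\ba)$ containing both $\bv$ and $\bw$ is $F_S:=\tes_n(\ba)\cap\bigcap_{(i,j)\notin S}H^n_{i,j}$. Hence $\bv$ and $\bw$ are adjacent if and only if $F_S$ is an edge, i.e. $\dim F_S=1$ (a bounded polytope of dimension $1$ is a segment whose two endpoints are its only vertices). Because $F_S$ is the set of nonnegative flows on $G_S$ with prescribed hook sums, a standard incidence-matrix computation gives $\dim F_S = |S|-\operatorname{rank}(L_n|_S)=\mu(G_S)$, the cyclomatic number (number of independent cycles) of $G_S$. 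The whole lemma therefore reduces to the single combinatorial identity
\[
  \mu(G_S)=d .
\]
Granting this, adjacency $\Leftrightarrow \dim F_S=1\Leftrightarrow d=1$, which is exactly the asserted characterization; the specialization to $\ba\in\R_{>0}^n$ is then immediate, since positivity of every hook sum forbids zero rows and hence rules out (O2) and (O3), leaving only (O1) together with a single (O4).

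Finally I would prove $\mu(G_S)=d$. The inequality $\mu(G_S)\le d$ is easy: deleting from $G_S$ the single $\bw$-edge at each of the $d$ out-degree-$2$ nodes leaves every node with out-degree at most $1$, and since all edges increase the node index this leftover graph is acyclic, i.e. a forest, so adding back $d$ edges raises the cyclomatic number by at most $d$. The reverse inequality is the heart of the argument and is where feasibility enters. Here I would check that the forest $F:=F_v\cup\{\text{(O3) edges}\}$ (equivalently $G_S$ with the $d$ offending $\bw$-edges removed) already connects both endpoints of each removed edge $g_i=(k_i,q_i)$: the tail $k_i$ lies in the component of the sink $n+1$ because it carries flow in $\bv$; and the head $q_i$ also reaches that component, for if $q_i$ were isolated in $F_v$ then $a_{q_i}=0$, so in $\bw$ node $q_i$ must re-emit the flow it receives from $k_i$ along an (O3) edge of $F$, and iterating along strictly increasing indices reaches $n+1$. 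Thus each $g_i$ closes a fundamental cycle of $F$, and these $d$ fundamental cycles are linearly independent, giving $\mu(G_S)\ge d$. The main obstacle is precisely this last feasibility bookkeeping---turning the hypothesis that $\bv$ and $\bw$ realize the \emph{same} hook-sum vector $\ba$ into the statement that every head $q_i$ is tied back to the sink component---which is what forces the naive upper bound $\mu(G_S)\le d$ to be attained.
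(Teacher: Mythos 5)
The paper does not actually prove this statement: it is imported verbatim from \cite[Theorem 2.7]{Meszaros2017}, so there is no internal proof to compare yours against, and your write-up has to stand on its own. It does. Your route is essentially the M\'esz\'aros--Morales--Rhoades one reconstructed from scratch: the minimal face containing $\bv$ and $\bw$ is the set of nonnegative flows supported on $G_S$, its dimension is the cyclomatic number $\mu(G_S)$, and the lemma collapses to the identity $\mu(G_S)=d$. The reduction steps are all sound --- the observation that at-most-one-$1$-per-row (Theorem \ref{Tesler}) forces every row comparison into (O1)--(O4), the midpoint trick identifying the minimal face, the rank computation $\dim F_S=|S|-\operatorname{rank}(L_n|_S)=\mu(G_S)$, and the upper bound $\mu(G_S)\le d$ via the lowest-vertex-of-a-cycle argument. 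The one place compressed enough to need expansion is the lower bound. For the tail $k_i$, ``it carries flow in $\bv$'' is not by itself a reason to lie in the sink's component: you need the chain $k_i\to j_{\bv}(k_i)\to j_{\bv}^{2}(k_i)\to\cdots$, which terminates at $n+1$ because, by Lemma \ref{specialv}, any node receiving positive flow in $\bv$ has a nonzero row, so the chain can only stop at a diagonal entry. For the head $q_i$, your iteration ``along (O3) edges'' can land on an intermediate node $p$ whose row is an (O4) row; there the $\bw$-out-edge of $p$ is one of the \emph{deleted} edges and is not in $F$, so you must switch at $p$ to following the $\bv$-chain (which is in $F_v\subseteq F$ and reaches the sink by the same Lemma \ref{specialv} argument). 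With that case supplied, every node of $G_S$ of positive out-degree is tied to the sink's component of $F$, each deleted edge closes a cycle containing an edge private to it, and $\mu(G_S)\ge d$ follows. These are fillable gaps rather than errors, and the ``in particular'' specialization for $\ba\in\R_{>0}^{n}$ is handled correctly.
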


\begin{lem} \cite[Lemma 2.4]{Meszaros2017} \label{specialv}
Let $\ba=(a_{1},\dots,a_{n}) \in \R_{\geq 0}^{n}$. The $k$-th row of a vertex $\bv$ of $\tes_{n}(\ba)$ is a zero row if and only if $a_{k}=0$ and the entries of $k$-th column of $\bv$ above the diagonal (excluding the diagonal entry) are all zero.
\end{lem}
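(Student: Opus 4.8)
The plan is to extract everything from the single hook-sum equation $\hs_k(\bv)=a_k$ together with the non-negativity of the entries of $\bv$; the vertex hypothesis turns out to play no role. The key preliminary observation I would record is that the $k$-th hook splits into two parts of opposite sign. Writing $R_k := v_{k,k}+\sum_{j>k} v_{k,j}$ for the sum of the entries in the $k$-th row and $C_k := \sum_{j<k} v_{j,k}$ for the sum of the entries of the $k$-th column strictly above the diagonal, the definition of $\hs_k$ gives $\hs_k(\bv)=R_k-C_k$. Since $\bv\in\tes_n(\ba)$ has non-negative entries, both $R_k\ge 0$ and $C_k\ge 0$, and the constraint rearranges to $R_k=a_k+C_k$.

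For the forward implication I would assume the $k$-th row is zero, so that $R_k=0$; then $a_k+C_k=0$ with $a_k\ge 0$ and $C_k\ge 0$ forces $a_k=0$ and $C_k=0$, and since $C_k$ is a sum of the non-negative numbers $v_{j,k}$ (for $j<k$), each of these vanishes, giving the claim about the $k$-th column. For the converse I would assume $a_k=0$ and $v_{j,k}=0$ for all $j<k$, so that $C_k=0$ and hence $R_k=a_k+C_k=0$; as $R_k$ is a sum of the non-negative entries $v_{k,k},\dots,v_{k,n}$, each is zero and the $k$-th row is a zero row.

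I do not expect any genuine obstacle: the whole argument is a sign analysis of non-negative quantities tied together by one linear equation, so the only real decision is setting up the notation $R_k$, $C_k$ so that the two implications become immediate. The one point worth flagging is that this proof never uses that $\bv$ is a vertex, so the stated equivalence in fact holds for every point of $\tes_n(\ba)$; the vertex wording is kept only because that is the form in which the result will later be invoked (for instance alongside the support characterization of vertices in Theorem \ref{Tesler}).
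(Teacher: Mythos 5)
Your proof is correct. The paper does not prove this lemma itself but imports it from \cite{Meszaros2017}, so there is no in-paper argument to compare against; your sign analysis of the single equation $\hs_k(\bv)=R_k-C_k=a_k$ with $R_k,C_k\ge 0$ is exactly the natural (and essentially the original) argument, and your observation that the vertex hypothesis is never used --- the equivalence holds for every point of $\tes_n(\ba)$ --- is accurate.
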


\begin{ex} \label{Tesx}
Let $\ba=(2,2,3,4)$, and
\[\bv=\begin{pmatrix}
 0 & 2 & 0 & 0 \\
  & 0 & 0 & 4 \\
  &  &3 & 0 \\
  & &  & 8 \\
  \end{pmatrix}
  \text{ and }
  \bw=\begin{pmatrix}
  0 & 0 & 2 & 0 \\
  & 0 & 0 & 2 \\
  &  &5 & 0 \\
  & &  & 6 \\
\end{pmatrix}.\]
Then
\[\supp(\bv)=
 \begin{pmatrix}
 0 & 1 & 0 & 0 \\
  & 0 & 0 & 1 \\
  &  &1 & 0 \\
  & &  & 1 \\
\end{pmatrix} \text{ and }
\supp(\bw)=
 \begin{pmatrix}
  0 & 0 & 1 & 0 \\
  & 0 & 0 & 1 \\
  &  &1 & 0 \\
  & &  & 1 \\

\end{pmatrix}.\]
Using Theorem \ref{Tesler}, one can easily check that
$\bv$ and $\bw$ are vertices of $\tes_{4}(2,2,3,4)$. Also, they are adjacent vertices, because $\supp(\bw)$
 can be obtained by moving 1 on the first row of
 $\supp(\bv)$
 to the right.
\end{ex}

\section{All Tesler polytopes are deformations of \texorpdfstring{$\tes_{n}(\1)$}{Tes(\1)}
} \label{section:deformation}

The main goal of this section is to prove the following theorem:

\begin{thm} \label{main}
Let $\ba_{0} \in \R_{> 0}^{n}$. Then for any $\ba=(a_{1},\dots,a_{n}) \in \R_{\geq 0}^{n}$, the Tesler polytope $\tes_{n}(\ba)$ is a deformation of
$\tes_{n}(\ba_{0})$. 
More precisely, $\tes_{n}(\ba)$ is a weak deformation of $\tes_{n}(\ba_{0})$ (equivalently, is normally equivalent to $\tes_{n}(\ba_{0})$) if $\ba \in \R_{>0}^{n-1} \times \R_{\ge0}$, and is a strong deformation of $\tes_{n}(\ba_{0})$ if $a_i =0$ for some $1 \leq i \leq n-1$.

In particular, all Tesler polytopes $\tes_n(\ba)$ (for $\ba \in \R_{\ge0}^n$) are deformations of the Tesler polytope $\tes_n(\1)$.
\end{thm}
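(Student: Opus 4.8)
The plan is to use the combinatorial criterion for deformations provided by Lemma \ref{combisoeq}/\ref{itm:phi}: to show $\tes_n(\ba)$ is a deformation of $\tes_n(\ba_0)$, I would construct a surjective map $\phi$ from $\vert(\tes_n(\ba_0))$ to $\vert(\tes_n(\ba))$ such that for any two adjacent vertices $\bv, \bw$ of $\tes_n(\ba_0)$, the edge vector $\phi(\bv) - \phi(\bw)$ is a nonnegative scalar multiple of $\bv - \bw$. The key structural fact I would exploit is that since $\ba_0 \in \R_{>0}^n$, by Theorem \ref{Tesler} every vertex of $\tes_n(\ba_0)$ has a support with exactly one $1$ in each row, so its support is determined by a \emph{position function} assigning to each row $k$ the column containing the $1$. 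This gives a clean combinatorial indexing of the vertices of the ``source'' polytope, independent of $\ba_0$.

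The natural definition of $\phi$ is to send a vertex $\bv$ of $\tes_n(\ba_0)$ to the \emph{unique} point $\bu$ of $\tes_n(\ba)$ with $\supp(\bu) \le \supp(\bv)$; by Lemma \ref{lem:samev} such a $\bu$, if it exists in $\tes_n(\ba)$, is forced to be a vertex, and it is unique. So the first step is to verify that this $\bu$ is well defined, i.e.\ that the linear system ``hook sums equal $\ba$, entries vanish off the support of $\bv$'' has a (necessarily unique) nonnegative solution in $\tes_n(\ba)$ whenever $\bv$ ranges over vertices of $\tes_n(\ba_0)$. Because each row of $\supp(\bv)$ carries a single $1$, one can read off the entries of $\bu$ recursively, processing rows from the bottom up: the single nonzero entry in row $k$ is determined by $a_k$ together with the already-determined column contributions from rows below, and I would check this recursion produces nonnegative values — here the positivity of the relevant partial sums is where the hypothesis on $\ba$ enters, and this is the technical heart of the construction. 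Surjectivity of $\phi$ then follows because every vertex $\bw$ of $\tes_n(\ba)$ has, by Theorem \ref{Tesler}, at most one $1$ per row, and hence $\supp(\bw) \le \supp(\bv)$ for some vertex $\bv$ of the full-support polytope $\tes_n(\ba_0)$; for such $\bv$ we get $\phi(\bv) = \bw$.

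The main step is then the edge condition. By Lemma \ref{adj}, for $\ba_0 \in \R_{>0}^n$ two vertices $\bv, \bw$ of $\tes_n(\ba_0)$ are adjacent exactly when their supports differ by moving the single $1$ in one row, say row $k$, from column $j$ to column $j'$. I would compute the edge vector $\bv - \bw$ explicitly: moving the $1$ in row $k$ changes exactly two entries in row $k$ and triggers a propagating correction in the hook sums of the rows indexed by $j$ and $j'$ (and in turn the entries on their diagonals), and I expect $\bv - \bw$ to be a specific, easily described vector supported on a short ``path'' of entries. The same local analysis applies to $\phi(\bv) - \phi(\bw)$ in $\tes_n(\ba)$, giving a vector with the same support pattern; comparing the two shows $\phi(\bv) - \phi(\bw) = r_{\bv,\bw}(\bv - \bw)$ where $r_{\bv,\bw} \ge 0$ is essentially a ratio of the corresponding hook-sum data of $\ba$ and $\ba_0$. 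The hard part will be controlling this constant: I must verify $r_{\bv,\bw} \ge 0$ in all cases and, for the sharper dichotomy in the theorem, determine precisely when it is strictly positive versus zero. I expect $r_{\bv,\bw} > 0$ for all edges exactly when $\ba \in \R_{>0}^{n-1}\times\R_{\ge 0}$ (yielding a weak deformation, by the last clause of Lemma \ref{combisoeq}), while some $r_{\bv,\bw}$ vanishes — forced by a zero row appearing in a vertex of $\tes_n(\ba)$ as described in Lemma \ref{specialv} — precisely when $a_i = 0$ for some $1 \le i \le n-1$, giving a strong deformation. Finally, the particular case $\ba_0 = \1 \in \R_{>0}^n$ yields the concluding statement that every $\tes_n(\ba)$ is a deformation of $\tes_n(\1)$.
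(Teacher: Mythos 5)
Your proposal follows essentially the same route as the paper: the map $\phi$ sending a vertex $\bv$ of $\tes_n(\ba_0)$ to the unique point of $\tes_n(\ba)$ with support dominated by $\supp(\bv)$, surjectivity via Theorem \ref{Tesler}, and the edge condition via an explicit description of edge vectors as signed indicator vectors of a ``path'' of entries (the paper's $D_{\bv}(k)$ machinery in Proposition \ref{prop:edge}), with the weak/strong dichotomy decided by whether a zero row ever appears in $\phi(\bv)$. One minor correction: the recursion determining $\phi(\bv)$ runs top-down, not bottom-up --- the $k$-th hook sum involves column-$k$ entries from rows \emph{above} row $k$, so the unique nonzero entry of row $k$ equals $a_k$ plus already-computed nonnegative contributions from earlier rows, which is why nonnegativity is automatic.
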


 Throughout this section, we use the alternative definition of deformations introduced in Lemma \ref{combisoeq}. We will utilize a machinery that was used in the proof of Lemma 2.4 in \cite{Meszaros2017}.

\begin{defn}
Let $\ba \in \R_{\geq 0}^n$ and $\bv=(v_{i,j})$ be a vertex of $\tes_n(\ba)$.
\begin{enumerate}
    \item Define $j_{\bv}:\{0,1,2,\dots,n\} \longrightarrow \{0,1,2,\dots,n\}$ by 
    \[j_{\bv}(k)=\begin{cases}
    0 & \text{if $k=0$ or the $k$-th row of $\bv$ is a zero row}, \\
    l & \text{if $k\neq0$ and $v_{k,l}$ is the unique nonzero entry on the $k$-th row.}
    \end{cases}\] 
    (This is well-defined by Theorem \ref{Tesler}).
  \item  Since $\bv$ is upper triangular, the sequence $\{k, j_{\bv}(k), j_{\bv}^{2}(k),\dots\}$ is strictly increasing until it stabilizes. Assume that $q$ is the smallest integer such that $j_{\bv}^{q-1}(k)=j_{\bv}^{q}(k)$.
    Define \[\dep_{\bv}(k)=\{(k,j_{\bv}(k)),(j_{\bv}(k),j_{\bv}^2(k)),\dots,(j_{\bv}^{q-1}(k),j_{\bv}^{q}(k))\}.\] 

  \item Let $D_{\bv}(k) \in \U(n)$ be the upper triangular $\{0,1\}$-matrix recording the positions appearing in $\dep_{\bv}(k)$. More precisely,
    \[D_{\bv}(k):=(m_{i,j})
\text{ where }  m_{i,j}=\begin{cases}
1, & \text{if }(i,j) \in \dep_{\bv}(k), \\
0, & \text{if }(i,j) \notin \dep_{\bv}(k)
\end{cases}.\]
\end{enumerate}
\end{defn}

The purpose of this definition is to describe the edge vector from a vertex of $\tes_{n}(\ba)$ to one of its adjacent vertices.

\begin{prop}\label{prop:edge}
Let $\ba=(a_{1},\dots,a_{n}) \in \R_{\geq 0}^n$. Suppose $\bv=(v_{i,j})$ and $\bw=(w_{i,j})$ are adjacent vertices of $\tes_n(\ba)$, and suppose the first row that $\supp(\bv)$ and $\supp(\bw)$ differ is the $k$-th row. Then \[\bw-\bv=c(D_{\bw}(k)-D_{\bv}(k))\] where $c \in \R_{>0}$ is the unique non-zero entry in the $k$-th row of $\bv$ (or that of $\bw$). 
\end{prop}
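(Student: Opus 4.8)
The plan is to exhibit the single point $\bu:=\bv+c\bigl(D_{\bw}(k)-D_{\bv}(k)\bigr)$, where $c:=v_{k,j_{\bv}(k)}$, and to prove that $\bu=\bw$; since $\bu-\bv=c(D_{\bw}(k)-D_{\bv}(k))$, this is exactly the asserted identity. First I would pin down the local picture at the first differing row $k$. Since rows $1,\dots,k-1$ of $\supp(\bv)$ and $\supp(\bw)$ coincide, column $k$ has the same support above the diagonal in both matrices. Combining this with Lemma \ref{specialv}, I would rule out that row $k$ is of type (O2) or (O3) of Lemma \ref{adj}: if, for instance, $\bw$ had a zero $k$-th row, then $a_k=0$ and column $k$ would vanish above the diagonal in $\bw$, hence also in $\bv$, forcing the contradiction $a_k=\hs_k(\bv)=v_{k,j_{\bv}(k)}>0$. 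Thus row $k$ is the unique (O4) row, and $\bv,\bw$ carry their $k$-th nonzero entries in distinct columns $l:=j_{\bv}(k)\neq j_{\bw}(k)=:l'$; in particular $w_{k,l}=0$. That the value $c$ is also the $k$-th nonzero entry of $\bw$ will drop out at the end.

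Second, I would record the ``unit-flow'' property of the dependency matrices: for every row index $i$, $\hs_i(D_{\bv}(k))=\hs_i(D_{\bw}(k))=\delta_{i,k}$ (Kronecker delta). This is a direct telescoping computation along the strictly increasing chain $k=p_0<p_1<\cdots<p_{q-1}=p_q$ recorded in $\dep_{\bv}(k)$: each position $(p_t,p_{t+1})$ with $t<q-1$ contributes $+1$ to $\hs_{p_t}$ and $-1$ to $\hs_{p_{t+1}}$, while the terminal position lies on the diagonal and contributes only $+1$, so all interior contributions cancel and only $\hs_k$ survives. Consequently $D_{\bw}(k)-D_{\bv}(k)$ has vanishing hook sums, whence $\hs(\bu)=\hs(\bv)=\ba$; the candidate $\bu$ already satisfies the affine constraints defining $\tes_n(\ba)$.

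Third, I would verify $\bu\in\tes_n(\ba)$ and locate it on the edge $[\bv,\bw]$. For nonnegativity it suffices to check $\bu\ge\0$ along the chain of $\bv$, since $D_{\bw}(k)\ge\0$ only raises entries and $\bu$ equals $\bv$ off the two chains. Writing the conservation law $\hs_{p_t}(\bv)=a_{p_t}\ge 0$ at each node of the chain gives $v_{p_t,p_{t+1}}\ge v_{p_{t-1},p_t}$, so by induction every entry of $\bv$ along its chain is $\ge v_{k,l}=c$; subtracting $c$ therefore keeps $\bu$ nonnegative. Next, if a position $(i,j)$ is a common zero of $\bv$ and $\bw$, then it lies on neither chain (chain positions are nonzero in the respective matrix), so $(D_{\bw}(k)-D_{\bv}(k))_{i,j}=0$ and hence $u_{i,j}=0$. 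Since an edge of a polytope is the intersection of the polytope with the facets containing it, and every facet through both $\bv$ and $\bw$ is a hyperplane $\{m_{i,j}=0\}$ at a common zero, this shows that $\bu$ lies on the edge $[\bv,\bw]$.

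Finally, writing $\bu=(1-s)\bv+s\bw$ for some $s\in[0,1]$ and evaluating at the position $(k,l)$—where $v_{k,l}=c$, $w_{k,l}=0$ (as $l'\neq l$), and $(D_{\bw}(k)-D_{\bv}(k))_{k,l}=-1$—gives $0=u_{k,l}=(1-s)c$, forcing $s=1$ and hence $\bu=\bw$. The main obstacle is the third stage: the clean route hinges on the two observations that the conservation law forces every chain entry of $\bv$ to be at least $c$ (yielding nonnegativity) and that $D_{\bw}(k)-D_{\bv}(k)$ is supported inside $\supp(\bv)\cup\supp(\bw)$ (placing $\bu$ on the edge). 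Together these let me avoid a direct, case-heavy comparison of $\supp(\bv)$ and $\supp(\bw)$ and instead identify $\bw$ by a one-parameter argument.
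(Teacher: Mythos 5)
Your proof is correct, and while it builds the same candidate point as the paper (a perturbation of one endpoint by $c(D_{\bw}(k)-D_{\bv}(k))$, justified by the same two facts that $\Hs(D_{\bv}(k))=\be_k$ and that every entry of a vertex along its chain is at least $c$, i.e.\ Lemma \ref{lem:Dprop}), the final identification step is genuinely different. The paper anchors at $\bw$, sets $\bu=\bw+cD_{\bv}(k)-cD_{\bw}(k)$, and proves $\bu=\bv$ through the support criterion of Lemma \ref{lem:samev}; the price is the inductive Lemma \ref{lem:adj}/\ref{itm:c}, which shows that every position where $\bw$ is nonzero but $\bv$ vanishes lies on $\dep_{\bw}(k)$ with value exactly $c$. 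You anchor at $\bv$ and instead note that your candidate vanishes at every common zero of $\bv$ and $\bw$ (since chain positions are nonzero in the respective matrix), so that, because every facet of $\tes_n(\ba)$ is cut out by some hyperplane $H_{i,j}^{n}$ and a proper face equals the intersection of the facets containing it, the candidate lies on the segment $[\bv,\bw]$; evaluating the single coordinate $(k,j_{\bv}(k))$ then forces it to be the endpoint $\bw$, and the equality of the two rows' nonzero values falls out as a byproduct. This trades the paper's row-by-row induction for a soft face-lattice/convexity argument, which is shorter here but does not yield the finer structural information of Lemma \ref{lem:adj}/\ref{itm:c}. Your preliminary reduction showing that row $k$ must be an \ref{itm:o4} row (via Lemma \ref{specialv} and the agreement of column $k$ above the diagonal) also differs mildly from the paper's route through the hook-sum identity \eqref{eq:samesum}, but both are sound.
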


\begin{ex}
  Let $\bv$ and $\bw$ be as in Example \ref{Tesx}. The supports of $\bw$ and $\bv$ differ at the $1$st row already. Hence $k=1.$ We now compute $D_{\bw}(k)- D_{\bv}(k) = D_{\bw}(1)- D_{\bv}(1)$:
  \[ D_{\bw}(1) - D_{\bv}(1) = \begin{pmatrix}
 0 & 0 & 1 & 0 \\
  & 0 & 0 & 0 \\
  & & 1 & 0 \\
  &&& 0
\end{pmatrix} - \begin{pmatrix}
 0 & 1 & 0 & 0 \\
  & 0 & 0 & 1 \\
  & & 0 & 0 \\
  &&& 1
\end{pmatrix} = \begin{pmatrix}
 0 & -1 & 1 & 0 \\
  & 0 & 0 & -1 \\
  & & 1 & 0 \\
  &&& -1
\end{pmatrix}.\]
Next, we compute the edge vector from $\bv$ to $\bw$ directly: 
  \[ \bw -\bv =\begin{pmatrix}
 0 & 0 & 2 & 0 \\
  & 0 & 0 & 2 \\
  &  &5 & 0 \\
  & &  & 6  \end{pmatrix} - \begin{pmatrix}
0 & 2 & 0 & 0 \\
  & 0 & 0 & 4 \\
  &  &3 & 0 \\
  & &  & 8 
\end{pmatrix} = 2 
\begin{pmatrix}
 0 & -1 & 1 & 0 \\
  & 0 & 0 & -1 \\
  & & 1 & 0 \\
  &&& -1
\end{pmatrix}, \]
which is $2 (D_{\bw}(1)- D_{\bv}(1))$, agreeing with the assertion of Proposition \ref{prop:edge}.
\end{ex}

We need two preliminary lemmas before proving Proposition \ref{prop:edge}.

\begin{lem} \label{lem:Dprop}
  Let $\ba \in \R_{\geq 0}^n$ and $\bv=(v_{i,j}) \in \vert(\tes_n(\ba))$. Suppose the $k$-th row of $\bv$ is a non-zero row, and $c$ is the non-zero entry in the $k$-th row. Assume \[\dep_{\bv}(k)=\{(k,j_{\bv}(k)),(j_{\bv}(k),j_{\bv}^2(k)), \dots,(j_{\bv}^{q-1}(k),j_{\bv}^{q}(k))\}.\]

Then $D_{\bv}(k)$ has the following properties:
\begin{enumerate}[label={\rm (\arabic*)}]

  \item\label{itm:bek} $\Hs(D_{\bv}(k))=\be_k$ where $\be_k$ is the $k$-th standard basis vector of $\mathbb{R}^n$.
  \item\label{itm:entrynonnegative} The entries of $\bv -cD_{\bv}(k)$ are all non-negative. 
\end{enumerate}
\end{lem}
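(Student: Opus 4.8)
The plan is to first make $D_{\bv}(k)$ completely explicit and then handle the two assertions by separate, short arguments. Writing $k_0 = k$ and $k_i = j_{\bv}^{i}(k)$, the hypothesis yields a strictly increasing chain $k_0 < k_1 < \cdots < k_{q-1} = k_q =: m$, and the $1$'s of $D_{\bv}(k)$ sit exactly at the off-diagonal positions $(k_0,k_1), (k_1,k_2), \dots, (k_{q-2},k_{q-1})$ together with the single diagonal position $(m,m)$. I would record, for later use, that the unique nonzero entry of $\bv$ on row $k_i$ is $c_i := v_{k_i,k_{i+1}}$, so that $c_0 = c$ and $c_{q-1} = v_{m,m}$.

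For part (1) I would invoke the flow interpretation of the hook sum: since $\Hs = L_n$ is exactly the net out-flow vector and the diagonal position $(m,m)$ encodes the edge $(m,n+1)$ into the sink, $D_{\bv}(k)$ is the indicator matrix of the single directed path $k_0 \to k_1 \to \cdots \to k_{q-1} \to (n+1)$ in $\vec{K}_{n+1}$. Reading off the incidence contributions then gives $\hs_{k_0}(D_{\bv}(k)) = 1$ (one outgoing, no incoming path-edge), $\hs_{k_i}(D_{\bv}(k)) = 0$ for $1 \le i \le q-1$ (one incoming and one outgoing), and $\hs_i(D_{\bv}(k)) = 0$ for every other $i$, so $\Hs(D_{\bv}(k)) = \be_k$. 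Anyone preferring to avoid the flow language can read the same three cases straight off the explicit entry list above.

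For part (2) the plan is to prove the monotonicity $c = c_0 \le c_1 \le \cdots \le c_{q-1}$. Because $\bv$ is a vertex, the diagonal-and-right part of row $k_i$ sums to just $c_i$, so the hook-sum equation $\hs_{k_i}(\bv) = a_{k_i}$ becomes $c_i - \sum_{j < k_i} v_{j,k_i} = a_{k_i}$. Since $a_{k_i} \ge 0$, every entry above the diagonal is non-negative, and the column sum $\sum_{j < k_i} v_{j,k_i}$ contains the term $v_{k_{i-1},k_i} = c_{i-1}$, this forces $c_i \ge c_{i-1}$; chaining gives the desired monotonicity. Consequently, at each position where $D_{\bv}(k)$ is $1$ the corresponding entry of $\bv$ is some $c_i \ge c$, so it stays non-negative after subtracting $c$, while every other entry of $\bv - c D_{\bv}(k)$ is unchanged and hence non-negative.

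The main obstacle is exactly this monotonicity step in part (2); part (1) and the rest of (2) are bookkeeping. The crucial move is to isolate the single column contribution $v_{k_{i-1},k_i}$ from the hook-sum equation at $k_i$ and note that all remaining terms carry the favorable sign. I would also check the degenerate case $q = 1$ (where the nonzero entry of row $k$ already lies on the diagonal) separately, although it falls out of the same computation.
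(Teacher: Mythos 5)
Your proof is correct and takes essentially the same approach as the paper: the identical three-case hook-sum computation for part (1) (your path-in-$\vec{K}_{n+1}$ gloss is just a reformulation of reading off the incidence contributions), and the identical monotonicity chain $c = c_0 \le c_1 \le \cdots \le c_{q-1}$ for part (2), obtained by isolating $v_{k_{i-1},k_i}$ in the non-negative hook sum at $k_i$ and chaining.
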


\begin{proof}
For convenince, we use $d_{i,j}$ to denote the $(i,j)$-th entry of $D_{\bv}(k)$.

  By convention, we let $j_{\bv}^{0}(k)=k.$ Then it follows from the definition of $\dep_{\bv}(k)$ that 
  \[ 1 \le j_{\bv}^{0}(k) < j_{\bv}^{1}(k) < j_{\bv}^{2}(k) < \cdots < j_{\bv}^{q-2}(k) < j_{\bv}^{q-1}(k) = j_{\bv}^{q}(k) \le n.\]
Therefore, $d_{i,j}$ is $1$ if $(i,j)=(j_{\bv}^{l-1}(k), j_{\bv}^{l}(k))$ for some $l=1,2,\dots, q,$ and is $0$ otherwise. 
  Clearly, if $i \neq j_{\bv}^{l}(k)$ for some $l = 0, 1, \dots, q-1,$ then $\hs_i(D_{\bv}(k)) = 0.$ If $i = j_{\bv}^{l}(k)$ for some $l = 1, 2, \dots, q-1,$ then
  \[ \hs_i(D_{\bv}(k)) = \sum_{j \ge j_{\bv}^{l}(k)} d_{j_{\bv}^{l}(k), j} - \sum_{j < j_{\bv}^{l}(k)} d_{j, j_{\bv}^{l}(k)}= d_{j_{\bv}^{l}(k), j_{\bv}^{l+1}(k)} - d_{j_{\bv}^{l-1}(k), j_{\bv}^{l}(k)} = 1 - 1= 0.\]
  If $i = k = j_{\bv}^{0}(k),$ then
  \[ \hs_i(D_{\bv}(k)) =\hs_k(D_{\bv}(k))  = \sum_{j \ge k} d_{k,j} - \sum_{j < k} d_{j,k} = d_{k, j_{\bv}^{1}(k)} = 1.\]
  Thus, $\Hs(D_{\bv}(k))=\be_k$, i.e. Property \ref{itm:bek} follows. 

  For Property \ref{itm:entrynonnegative}, we first show that 
\begin{equation}
  v_{j_{\bv}^{l-1}(k),j_{\bv}^{l}(k)} \leq v_{j_{\bv}^{l}(k),j_{\bv}^{l+1}(k)}, \quad \text{for any $l=1, 2, \dots, q-1.$}
	\label{eq:compv}
\end{equation}
Since $\bv \in \tes_n(\ba),$ by the definition of $\tes_n(\ba)$, we have that all the entries in $\bv$ are non-negative, and $\hs_{j_{\bv}^{l}(k)}(\bv)$, the $j_{\bv}^l(k)$-th hook sum of $\bv$ is non-negative.
However,
\[ \hs_{j_{\bv}^{l}(k)}(\bv) = \sum_{j \ge j_{\bv}^{l}(k)} v_{j_{\bv}^{l}(k), j} - \sum_{i < j_{\bv}^{l}(k)} v_{i, j_{\bv}^{l}(k)}.\]
Hence, \eqref{eq:compv} follows from the fact that $v_{j_{\bv}^{l}(k),j_{\bv}^{l+1}(k)}$ is the only positive element in the $j_{\bv}^{l}(k)$-th row of $\bv.$ Given \eqref{eq:compv}, we immediately have that
\[ c= v_{k,j_{\bv}(k)} \leq v_{j_{\bv}^{l}(k),j_{\bv}^{l+1}(k)}, \quad \text{for any $l=1,2, \dots, q-1$.}\]
Thus, Property \ref{itm:entrynonnegative} follows.
\end{proof}

\begin{lem}\label{lem:adj}
	Let $\ba \in \R_{\geq 0}^{n}$. Suppose $\bv$ and $\bw$ of $\tes_{n}(\ba)$ are adjacent vertices of $\tes_n(\ba),$ and suppose that the first row in which $\supp(\bv)$ and $\supp(\bw)$ differ is the $k$-th row. Then the followings are true.
	\begin{enumerate}[label={\rm (\arabic*)}]
	  \item\label{itm:rowsame} The first $k-1$ rows of $\bv$ and $\bw$ are the same.
	  \item\label{itm:uniquerow} The $k$-th row is the (unique) row of $\supp(\bw)$ that is obtained from that of $\supp(\bv)$ by operation \ref{itm:o4} of Lemma \ref{adj}. Furthermore, the unique non-zero entries in the $k$-th rows of $\bv$ and $\bw$ are equal to one another. 
	  \item\label{itm:c} Let $c$ be the non-zero entry in the $k$-th row of $\bw.$ For any $k \le i \le j \le n,$ if $w_{i,j} \neq 0$ (or equivalently $w_{i,j} > 0$) but $v_{i,j} =0$, then we have that $(i,j) \in \dep_\bw(k)$ and $w_{i,j} = c.$

	\end{enumerate}
\end{lem}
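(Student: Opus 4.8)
The three parts build on one another, so I would establish them in the listed order. The two tools I would use repeatedly are the hook-sum relation $\hs_i(\bm)=a_i$ and the vertex description of Theorem \ref{Tesler}, by which each row of $\bv$ and of $\bw$ carries at most one non-zero entry. Combined, these give a ``forced value'' principle: if rows $1,\dots,i-1$ of a vertex are known and row $i$ has its unique non-zero entry in a prescribed column, then that entry must equal $a_i+\sum_{j<i}m_{j,i}$, i.e. it is determined by the entries strictly above the diagonal in column $i$.

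For Part \ref{itm:rowsame} I would induct on $i=1,\dots,k-1$. Since $k$ is the first row on which $\supp(\bv)$ and $\supp(\bw)$ disagree, rows $1,\dots,k-1$ have equal supports. If rows $1,\dots,i-1$ of $\bv$ and $\bw$ already coincide, then $\sum_{j<i}v_{j,i}=\sum_{j<i}w_{j,i}$; since row $i$ has matching support, the forced-value principle makes the single entries agree (or both rows are zero), so row $i$ of $\bv$ equals row $i$ of $\bw$. For Part \ref{itm:uniquerow}, Part \ref{itm:rowsame} yields $\sum_{j<k}v_{j,k}=\sum_{j<k}w_{j,k}$, so the $k$-th rows of $\bv$ and $\bw$ share a common row-sum $s:=a_k+\sum_{j<k}v_{j,k}\ge 0$. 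Row $k$ is changed, so by Lemma \ref{adj} it is modified by (O2), (O3) or (O4). Under (O2) or (O3) exactly one of the two $k$-th rows would be a zero row while the other carries a positive entry equal to its row-sum; equating row-sums forces that entry to be $0$, a contradiction. Hence row $k$ uses (O4), and since (O4) occurs in precisely one row, row $k$ is that unique row. Finally, the single non-zero entry in each $k$-th row equals the common row-sum $s$, so the two entries coincide.

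Part \ref{itm:c} is the heart of the lemma and the step I expect to be hardest. I would first locate the relevant positions: for $i\ge k$, a position with $w_{i,j}\neq 0$ and $v_{i,j}=0$ can occur only in row $k$ (through (O4)) or in a row $i>k$ modified by (O3), because by Part \ref{itm:uniquerow} every row $i>k$ uses (O1), (O2) or (O3), and neither (O1) nor (O2) creates a new non-zero entry of $\bw$. For an (O3) row $i$, Lemma \ref{specialv} gives $a_i=0$ and that column $i$ of $\bv$ vanishes above the diagonal, so every non-zero $w_{j,i}$ with $j<i$ automatically satisfies $v_{j,i}=0$; moreover the hook-sum relation gives $w_{i,j_{\bw}(i)}=\sum_{j<i}w_{j,i}$. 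I would then induct on the row index $i$, assuming the conclusion for all rows below $i$: each such new entry $(j,i)$ (living in a row $j<i$) then lies in $\dep_{\bw}(k)$ with value $c$, which forces $i=j_{\bw}^{t}(k)$ to be a node of the $\bw$-chain.

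\textbf{Main obstacle.} The delicate point in Part \ref{itm:c} is showing that the inflow $\sum_{j<i}w_{j,i}$ consists of a \emph{single} term equal to $c$, rather than a positive multiple of $c$; this is exactly what pins the value to $c$. I would obtain it from the fact that the chain $k=j_{\bw}^{0}(k)<j_{\bw}^{1}(k)<\cdots$ is strictly increasing until it stabilizes on a diagonal entry, so column $i$ is entered by the chain at most once above the diagonal. This yields $w_{i,j_{\bw}(i)}=c$ and $(i,j_{\bw}(i))\in\dep_{\bw}(k)$, completing the induction; its base case (row $k$) is precisely the content of Part \ref{itm:uniquerow}.
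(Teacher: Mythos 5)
Your proposal is correct and follows essentially the same route as the paper: part (1) via the fact that supports plus hook sums determine the rows top-down, part (2) via the equality of the $k$-th row sums ruling out operations (O2) and (O3), and part (3) via induction on the row index using Lemma \ref{specialv} and the fact that the chain $\dep_{\bw}(k)$ meets each column at most once above the diagonal, which is exactly how the paper pins the inflow into row $i$ to a single term equal to $c$.
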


\begin{proof}
  By Theorem \ref{Tesler}, each row of $\bv$ has at most one non-negative entry. This implies that for any $i,$ the first $i$ rows of $\supp(\bv)$, together with the hook sum condition $\hs(\bv)=\ba,$ determine the first $i$ rows of $\bv.$ The same thing holds for $\bw.$ Therefore, since $\supp(\bv)$ and $\supp(\bw)$ agree at the first $k-1$ rows, we conclude that \ref{itm:rowsame} is true. 
  
Then using the fact that $\hs_k(\bw) = a_k = \hs_k(\bv)$, we see that
	\begin{equation}
\sum_{j=k}^n v_{k,j} = \sum_{j=k}^n w_{k,j}.
		\label{eq:samesum}
	\end{equation}
	Since $\supp(\bv)$ and $\supp(\bw)$ differ at the $k$-th row, the equation \eqref{eq:samesum} implies that the only operation (among the four operations) listed in Lemma \ref{adj} that can be applied to the $k$-th row of $\supp(\bv)$ to obtain that of $\supp(\bw)$ is operation \ref{itm:o4}. 
	Thus, $\bv$ and $\bw$ each has exactly one non-zero entry in their $k$-th row, but in different places. This together with \eqref{eq:samesum} completes our proof for \ref{itm:uniquerow}. 
	
	Finally, we prove \ref{itm:c} by induction on $i.$ Clearly, it holds when $i=k.$ Assume \ref{itm:c} holds for any $i: k \le i < i_0$ for some $i_0\le n,$ and we consider the cases when $i=i_0.$ One observes that the $i$-th row of $\supp(\bw)$ must be obtained from $\supp(\bv)$ by operation \ref{itm:o3} of Lemma \ref{adj}. Hence, the $i$-th row of $\bv$ is a zero row, and $w_{i,j}$ is the unique non-zero entry in the $i$-th row of $\bw.$ Applying Lemma \ref{specialv} to $\bv$, we have that 
\[ a_i = 0 \quad \text{and} \quad v_{1,i} = v_{2,i} = \cdots = v_{i-1,i} = 0.\]
Thus, $\hs_i(\bw)= a_i=0$, which implies that $w_{i,j} = \sum_{l=1}^{i-1} w_{l,i}.$
Since we have already shown \ref{itm:rowsame} holds, we can rewrite this equality as 
\[ w_{i,j} = \sum_{l=k}^{i-1} w_{l,i}.\]
Let $L = \{ k \le l \le i-1 \ | \ w_{l,i} \neq 0\}.$ Because $w_{i,j} \neq 0,$ we have that $L \neq \emptyset.$ For any $l \in L,$ we have $w_{l,i} \neq 0$ and $v_{l,i} = 0,$ and thus by the induction hypothesis $(l,i) \in \dep_\bw(k)$ and $w_{l,i} = c.$ However, it follows from the definition of $\dep_\bw(k)$ that no two pairs in $\dep_\bw(k)$ share the same second coordinates unless one of the pair is in the form of $(x, x).$ But $l < i$ for any $l \in L.$ We conclude that $L$ contains a unique element, say $l_0.$ Then clearly we have that $w_{i,j} = w_{l_0, i} = c$. Furthermore, since $(l_0,i) \in \dep_\bw(k)$, one verifies that $(i,j)\in \dep_\bw(k),$ completing our inductive proof.
\end{proof}

\begin{proof}[Proof of Proposition \ref{prop:edge}]
By Lemma \ref{lem:adj}/\ref{itm:uniquerow}, we must have that the two non-zero entries in the $k$-th rows of $\bv$ and $\bw$ agree, i.e., $v_{k,j_{\bv}(k)}=w_{k,j_{\bw}(k)}$.  Let \[c:=v_{k,j_{\bv}(k)}=w_{k,j_{\bw}(k)}.\] 
	Then it follows from Lemma \ref{lem:Dprop}/\ref{itm:bek} that $c\Hs(D_{\bv}(k))=c\Hs(D_{\bw}(k))=c\be_{k}$. Thus, if we let 
\[ \bu:=\bw+cD_{\bv}(k)-cD_{\bw}(k),\] then $\Hs(\bu) = \Hs(\bw) =\ba.$
Also, by Lemma \ref{lem:Dprop}/\ref{itm:entrynonnegative}, the entries of $\bw-cD_{\bw}(k)$ are non-negative which implies that the entries of $\bu=\left(\bw-cD_{\bw}(k)\right) +cD_{\bv}(k)$ are non-negative as well. Therefore, we conclude that $\bu \in \tes_{n}(\ba)$. 
It is left to show that $\bu = \bv,$ which by Lemma \ref{lem:samev} can be reduced to showing $\supp(\bu) \le \supp(\bv)$. Since $D_{\bv}(k)$ and $\bw -cD_{\bw}(k)$ have non-negative entries, one sees that showing $\supp(\bu) \leq \supp(\bv)$ is equivalent to showing the following two statements: 
\begin{enumerate}[label=(\roman*)]
	\item $\supp(cD_{\bv}(k)) \leq \supp(\bv)$, and
	\item $\supp(\bw - cD_\bw(k)) \le \supp(\bv).$
\end{enumerate}

We see that (i) follows directly from the definition of $D_{\bv}(k)$.

By Lemma \ref{lem:adj}/\ref{itm:rowsame} and the definition of $D_\bw(k)$, one sees that the first $k-1$ rows of $\bw-cD_{\bw}(k)$ are the same as that of $\bv,$ and the $k$-th row of $\bw - c D_\bw(k)$ is a zero row. One sees that in order to finish proving (ii), it is sufficient to show that for any $k < i \le j \le n,$ if $w_{i,j} \neq 0,$ then either $v_{i,j} \neq 0$ or the $(i,j)$-th entry of $\bw - cD_\bw(k)$ is $0.$
However, if $w_{i,j} \neq 0$ and $v_{i,j} = 0,$ it follows from Lemma \ref{lem:adj}/\ref{itm:c} that $w_{i,j}=c$, and thus the $(i,j)$-th entry of $\bw-cD_\bw(k)$ is $0,$ completing the proof.
\end{proof}

We need one more lemma before proving the result of this section.

\begin{lem} \label{welldef}
  Let $\ba_{0} \in \R_{>0}^{n}$ and $\ba \in \R_{\geq 0}^{n}$. Then for any $\bv \in \vert(\tes_{n}(\ba_{0}))$, there exists a unique $\bv' \in \tes_{n}(\ba)$ such that $\supp(\bv') \leq \supp(\bv)$. Furthermore, this unique $\bv'$ is a vertex of $\tes_n(\ba).$
\end{lem}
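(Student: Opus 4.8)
The plan is to exploit the fact that since $\ba_{0} \in \R_{>0}^{n}$, Theorem \ref{Tesler} forces every row of $\supp(\bv)$ to contain exactly one $1$; recording its position as $(k, j_{\bv}(k))$, the support of $\bv$ consists precisely of the $n$ entries $(1, j_{\bv}(1)), \dots, (n, j_{\bv}(n))$. Consequently any $\bv' = (v'_{i,j})$ with $\supp(\bv') \le \supp(\bv)$ is supported on these same positions, so it is completely determined by the $n$ numbers $x_{k} := v'_{k, j_{\bv}(k)}$, one per row. My first step is therefore to reduce the problem to finding these $n$ scalars.

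Next I would write out the hook sum conditions $\hs_{k}(\bv') = a_{k}$ for $1 \le k \le n$ in terms of the $x_{k}$. Because row $k$ of $\bv'$ contributes only the single entry $x_{k}$ to $\hs_{k}$ (whether it sits on or off the diagonal), and column $k$ receives the contribution $x_{i}$ exactly from those rows $i < k$ with $j_{\bv}(i) = k$, the $k$-th hook sum equation becomes $a_{k} = x_{k} - \sum_{i<k,\, j_{\bv}(i)=k} x_{i}$, i.e.
\[ x_{k} = a_{k} + \sum_{\substack{i<k\\ j_{\bv}(i)=k}} x_{i}. \]
This is a lower-triangular system solvable uniquely by forward substitution in $k = 1, 2, \dots, n$, which immediately yields the \emph{uniqueness} assertion: any admissible $\bv'$ must have exactly these entries. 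For \emph{existence}, I define $\bv'$ by these values and check that it lies in $\tes_{n}(\ba)$: the hook sum condition holds by construction, and non-negativity of every $x_{k}$ follows by induction on $k$, since $x_{k}$ is the sum of $a_{k} \ge 0$ and previously constructed non-negative terms --- this is exactly the step where the hypothesis $\ba \in \R_{\ge 0}^{n}$ is used.

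Finally, since $\supp(\bv') \le \supp(\bv)$ has at most one $1$ in each row, the ``Furthermore'' claim that $\bv'$ is a vertex of $\tes_{n}(\ba)$ is immediate from Theorem \ref{Tesler}. I expect the only real subtlety --- not so much an obstacle as a bookkeeping point --- to be verifying that the hook sum equations genuinely decouple into the triangular form above: one must check that the single non-zero entry of row $k$ always contributes $+x_{k}$ to $\hs_{k}$ regardless of whether $j_{\bv}(k) = k$ or $j_{\bv}(k) > k$, and that the diagonal position is never double-counted in the column sum. Once the triangular structure is confirmed, both the solvability and the sign control are routine.
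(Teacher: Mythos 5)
Your proof is correct and takes essentially the same approach as the paper: restrict attention to the $n$ support positions forced by Theorem \ref{Tesler}, observe that the hook sum conditions become a uniquely solvable system in the $n$ unknown entries, and invoke Theorem \ref{Tesler} again for the vertex claim. The paper compresses the forward-substitution and the non-negativity check (needed for existence in $\tes_n(\ba)$, using $\ba \in \R_{\ge 0}^n$) into ``it is easy to see''; you spell both out explicitly, which only strengthens the argument.
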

\begin{proof}
  Because $\ba_{0} \in \R_{>0}^{n}$,  by Theorem \ref{Tesler}, each row of $\supp(\bv)$ has exactly one $1$. Let 
  $S := \{ (i,j_i) \ | \ 1 \le i \le n\}$
  be the set of indices of the entries in $\supp(\bv)$ that are $1.$
  It is easy to see that there exists a unique solution $\bv' = (v'_{i,j}) \in \U(n)$ satisfying
  \[ \Hs(\bv') =\ba \quad \text{ and } \quad v'_{i,j} = 0 \ \text{for every } (i,j) \not\in S.\]
  Note that the condition $v'_{i,j} = 0$ for every $(i,j) \not\in S$ is equivalent to the condition $\supp(\bv') \le \supp(\bv).$ Hence, the first assertion of the lemma follows.

  Since $\supp(\bv') \leq \supp(\bv)$ implies that there is at most one non-zero entry in each row of $\bv'$, by Theorem \ref{Tesler}, $\bv'$ has to be a vertex of $\tes_{n}(\ba)$.
\end{proof}

Now, we are ready to prove our main result of this section.

\begin{proof}[Proof of Theorem \ref{main}]

We prove the theorem using Lemma \ref{combisoeq}. For any vertex $\bv$ of $\tes_n(\ba_0)$, we let $\phi(\bv)$ be the unique point/vertex $\bv'$ in $\tes_n(\ba)$ satisfying $\supp(\bv') \leq \supp(\bv)$ asserted by Lemma \ref{welldef}. Hence, $\phi$ is a well-defined map from $\vert(\tes_n(\ba_{0}))$ to $\vert(\tes_n(\ba))$. 

For any vertex $\bv'$ of $\tes_n(\ba)$, it follows from Theorem \ref{Tesler} that there exists a $\{0,1\}$-matrix $\bm \in \U(n)$ such that each row of $\bm$ has exactly one $1$ and $\supp(\bv') \le \bm.$ Applying Theorem \ref{Tesler} again, there exists a vertex $\bv$ of $\tes_n(\ba_0)$ such that $\supp(\bv)= \bm.$ Therefore, the surjectivity of $\phi$ follows.

Assume that $\bv$ and $\bw$ are adjacent vertices of $\tes_n(\ba_{0})$ where their support only differ in the $k$-th row. Let $\bv'=\phi(\bv)$ and $\bw'=\phi(\bw)$. In order to apply Lemma \ref{combisoeq}, we need to show that there exists $r_{\bv,\bw} \in \R_{\geq 0}$ such that
\[\bw'-\bv'=r_{\bw,\bv}(\bw-\bv).\]
We consider two cases below.
\begin{enumerate}[label={\bf Case \arabic*}]
  \item\label{itm:case1}
    Suppose the $k$-th row of $\supp(\bv')$ or that of $\supp(\bw')$ is a zero row. Without loss of generality, assume that the $k$-th row of $\supp(\bv')$ is a zero row. Then since $\supp(\bv)$ and $\supp(\bw)$ only differs in the $k$-th row, $\supp(\bv') \leq \supp(\bw)$. Thus, it follows from Lemma \ref{welldef} and the constrution of $\phi$ that $\bv'=\bw'$. Therefore, we can choose $r_{\bv,\bw}=0$.

\item
  Suppose neither the $k$-th row of $\supp(\bv')$ nor that of $\supp(\bw')$ is a zero row. 

  Let $\bv'=(v'_{i,j})$, and $v'_{k,l}$ be the (unique) non-zero entry in the $k$-th row of $\supp(\bv').$ Because $\supp(\bv') \leq \supp(\bv)$, clearly we have $j_{\bv}(k)=l=j_{\bv'}(k)$.
  Next, because $v'_{k,l} \neq 0,$ it follows from Lemma \ref{specialv} that the $l$-th row of $\bv'$ is a non-zero row. Using the same argument as above, we get $j_{\bv}(l) = j_{\bv'}(l),$ or equivalently, $j_{\bv'}^{2}(k)=j_{\bv}^{2}(k)$.
  By iterating this process, we obtain $j_{\bv'}^{s}(k)=j_{\bv}^{s}(k)$ for any integer $s \ge 0$. Therefore, $\dep_{\bv'}(k)=\dep_{\bv}(k)$ or equivalently, $D_{\bv'}(k)=D_{\bv}(k)$. 

  Similarly, we obtain $D_{\bw'}(k)=D_{\bw}(k)$. By Proposition \ref{prop:edge}, 
\[\bw -\bv=c(D_{\bw}(k)-D_{\bv}(k))\text{ and }\bw' -\bv'=d(D_{\bw'}(k)-D_{\bv'}(k))\] where $c$ and $d$ are the unique non-zero (positive) entries of $\bv$ and $\bv'$ respectively. Therefore, we can take $r_{\bw, \bv}=\dfrac{d}{c}$.
\end{enumerate}

In particular, when $\ba \in \R_{>0}^{n-1} \times \R_{\ge 0}$, $r_{\bw,\bv}$ is always positive because \ref{itm:case1} does not happen. However, when some of $a_{i}$ where $1 \leq i \leq n-1$ are zero, one can easily see that \ref{itm:case1} does happen which means the corresponding $r_{\bw,\bv}$'s are zero. This completes our proof. 
\end{proof}

We have proved Theorem \ref{main} using the alternative definition of deformations given in Lemma \ref{combisoeq}. Below we will make a connection to the language used in Definition \ref{defdef}, i.e., the original definition of deformations we introduced. Recall that for $\ba_0 \in \R_{>0}^n,$ the matrix description \eqref{mintes} is a minimal linear description for $\tes_n(\ba_0).$ Therefore, it is natural to ask for $\ba \in \R_{\ge 0}^n$, what the deforming vector for $\tes_n(\ba)$ is. 

\begin{lem}\label{lem:tight}
  Suppose $\ba_{0} \in \R_{>0}^{n}$ and $\ba \in \R_{\geq 0}^{n}$. Then the system of linear inequalities 
  \begin{equation}\label{eq:tes_n(a)}
  L_{n}\bm=\ba \text{ and }-P_{n}\bm\leq \boldsymbol{0}
\end{equation}
  is a tight linear inequality description for the non-empty Tesler polytope $\tes_n(\ba).$ 
  Hence, $\tes_n(\ba)$ is a deformation of $\tes_n(\ba_0)$ with deforming vector $(\ba, \0)$ with respect to $(L_n, -P_n)$.
\end{lem}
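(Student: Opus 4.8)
The plan is to deduce the lemma from Theorem~\ref{main} together with the uniqueness statement in Remark~\ref{rmk:tight}; the only genuine work is verifying that the system \eqref{eq:tes_n(a)} is tight. Indeed, \eqref{eq:tes_n(a)} is by definition \eqref{teslerdef} a correct inequality description of $\tes_n(\ba)$ in terms of the prescribed matrices $(L_n,-P_n)$, and Theorem~\ref{main} already tells us that $\tes_n(\ba)$ is a deformation of $\tes_n(\ba_0)$. So once tightness is in hand, Remark~\ref{rmk:tight} forces the (unique) deforming vector to be the right-hand side pair of \eqref{eq:tes_n(a)}, namely $(\ba,\0)$.

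First I would dispatch non-emptiness and all off-diagonal inequalities at once using the diagonal matrix $\diag(\ba)$, whose $(l,l)$-entry is $a_l$ and whose other entries vanish. Since $a_l\ge 0$ for all $l$, this matrix has non-negative entries, and a one-line check gives $\hs_l(\diag(\ba))=a_l$, so $\diag(\ba)\in\tes_n(\ba)$; in particular $\tes_n(\ba)\ne\emptyset$. Because every off-diagonal entry of $\diag(\ba)$ is $0$, this single point certifies tightness of every inequality $m_{i,j}\ge 0$ with $i<j$.

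The remaining, and only non-routine, task is to certify tightness of the diagonal inequalities $m_{i,i}\ge 0$ for $1\le i\le n-1$; this is where I expect the (mild) difficulty to lie. For a fixed such $i$ I would exhibit a point of $\tes_n(\ba)$ attaining $m_{i,i}=0$ by re-routing the $i$-th row into the last column: set $m_{i,n}=a_i$ and $m_{l,l}=a_l$ for every $l\notin\{i,n\}$, correct the corner to $m_{n,n}=a_n+a_i$, and let all other entries be $0$. All entries are non-negative since $a_i,a_n\ge 0$, and a direct computation of hook sums gives $\hs_l=a_l$ for $l\ne i,n$, $\hs_i=m_{i,n}=a_i$, and $\hs_n=m_{n,n}-m_{i,n}=a_n$; hence this matrix lies in $\tes_n(\ba)$ and has $m_{i,i}=0$.

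With tightness of every inequality of \eqref{eq:tes_n(a)} established, I would conclude as follows: $\tes_n(\ba)$ is a deformation of $\tes_n(\ba_0)$ by Theorem~\ref{main}, and by Remark~\ref{rmk:tight} its deforming vector with respect to $(L_n,-P_n)$ is the unique pair making \eqref{eq:tes_n(a)} a tight description. As the pair $(\ba,\0)$ has just been shown to do exactly that, it is the deforming vector, proving the lemma. The entire content thus reduces to the tightness check, inside which the diagonal positions are the sole case needing an explicit construction.
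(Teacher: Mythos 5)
Your proposal is correct and follows essentially the same route as the paper: reduce everything to checking tightness of \eqref{eq:tes_n(a)} and then invoke Theorem~\ref{main} together with Remark~\ref{rmk:tight}; the paper likewise uses the diagonal matrix $\diag(\ba)$ to certify the off-diagonal inequalities. The only (immaterial) difference is that for the diagonal inequalities the paper exhibits a single witness supported on the superdiagonal, with $(i,i+1)$-entry $\sum_{l=1}^{i}a_l$ and corner entry $\sum_{l=1}^{n}a_l$, which kills all $m_{i,i}$ for $i\le n-1$ at once, whereas you use a separate re-routed matrix for each $i$; both verifications are valid.
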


\begin{proof} 
We already know that the system of linear equalities \eqref{eq:tes_n(a)} defines the non-empty polytope $\tes_n(\ba).$ Hence, we only need to show the tightness part of the first assertion of the lemma. Note that $-P_{n}\bm\leq \0$ is the same as 
  \[  m_{i,j} \geq 0  \text{ for }1 \le i \le j \le n \text{ where }(i,j) \neq (n,n).\]
  So we need to show that for every $1 \le i \le j \le n$ with $(i,j) \neq (n,n)$, there exists a point $\bm \in \tes_n(\ba)$ such that $m_{i,j}=0.$ We construct two points ${\bm}^1=(m_{i,j}^1)$ and ${\bm}^2=(m_{i,j}^2)$ where
\[  m^1_{i,j}=
  \begin{cases} a_{i}, & \text{if $i=j$}, \\
      0, & \text{otherwise};
    \end{cases} \quad \text{ and } \quad
   m^2_{i,j}=
  \begin{cases} \sum_{l=1}^{i}a_{l}, & \text{if $j=i+1$,} \\
    \sum_{l=1}^{n}a_{l}, & \text{if $(i,j)=(n,n)$}, \\
0 &\text{otherwise}.\end{cases} \]
It is straightforward to verify that $\bm^1, \bm^2 \in \tes_n(\ba)$. By definition, $m^1_{i,j}=0$ for all $1 \le i < j \le n$ and $m^2_{i,i}=0$ for all $1 \le i \le n-1.$ Therefore, the first assertion follows.

Finally, by Theorem \ref{main}, we already know that $\tes_n(\ba)$ is a deformation of $\tes_n(\ba_0).$ Thus, the second assertion follows from the first assertion and Remark \ref{rmk:tight}.
\end{proof}

We finish this section by stating a generalization of Theorem \ref{main}, in response to a question asked by an anonymous referee. 
For each $\ba \in \R_{\ge 0}^n$, we define $\cI(\ba) = \{ i \in [n-1] \ | \ a_i > 0\}$ to be the set of indices in $[n-1]$ such that $a_i$ is nonzero. Note that we do not care about whether $a_n$ is zero or not. 

\begin{thm}\label{thm:gen}
  Let $\ba, \bb \in \R_{\ge 0}^n$. Then $\tes_n(\ba)$ is a deformation of $\tes_n(\bb)$ if $\cI(\ba) \subseteq \cI(\bb)$. Moreover, $\tes_n(\ba)$ is normally equivalent to $\tes_n(\bb)$ if $\cI(\ba) = \cI(\bb)$.
\end{thm}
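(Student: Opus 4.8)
The plan is to deduce Theorem~\ref{thm:gen} from the explicit description of the deformation cone in Theorem~\ref{thm:tesdefcone} together with the face-lattice dictionary of Corollary~\ref{cor:type}; this is exactly the route advertised in the organization of the paper. Fix once and for all some $\ba_0 \in \R_{>0}^n$ (for instance $\ba_0 = \1$), and view both $\tes_n(\ba)$ and $\tes_n(\bb)$ as deformations of the common base polytope $\tes_n(\ba_0)$. This is legitimate: for any $\ba \in \R_{\ge 0}^n$ the polytope $\tes_n(\ba)$ is non-empty (it contains the diagonal matrix with entries $a_1, \dots, a_n$), Theorem~\ref{main} guarantees it is a deformation of $\tes_n(\ba_0)$, and Lemma~\ref{lem:tight} identifies its deforming vector with respect to $(L_n, -P_n)$ as $(\ba, \0)$. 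The same applies to $\bb$, giving the deforming vector $(\bb, \0)$.

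The first step is to identify, for each $\ba \in \R_{\ge 0}^n$, the minimal face of $\defcone(\tes_n(\ba_0))$ whose relative interior contains $(\ba, \0)$. Writing a generic point of $\R^n \times \tU(n)$ as $(\bc, \tilde{\bb})$, Theorem~\ref{thm:tesdefcone} presents the cone by the inequalities $\hs_i(\tilde{\bb}) + c_i \ge 0$ for $1 \le i \le n-1$, and this description is minimal (irredundant). Evaluating the $i$-th inequality at $(\ba, \0)$ gives $\hs_i(\0) + a_i = a_i$, which vanishes exactly when $a_i = 0$, i.e. exactly when $i \in [n-1]\setminus\cI(\ba)$. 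Using the standard fact that the minimal face of a polyhedron containing a given point is obtained by turning into equalities precisely the inequalities tight at that point, I conclude that this minimal face is
\[
K_\ba = \defcone(\tes_n(\ba_0)) \cap \bigcap_{i \in [n-1]\setminus \cI(\ba)} \{(\bc,\tilde{\bb}) : \hs_i(\tilde{\bb}) + c_i = 0\},
\]
and analogously for $K_\bb$ with $\cI(\bb)$ in place of $\cI(\ba)$. In particular $(\ba,\0)$ lies in the relative interior of $K_\ba$, as required to apply Corollary~\ref{cor:type}.

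The second step is purely combinatorial. Set $S_\ba := [n-1]\setminus \cI(\ba)$ and $S_\bb := [n-1]\setminus \cI(\bb)$, the index sets of the activated equalities. Since activating more equalities produces a smaller face, the inclusion $S_\bb \subseteq S_\ba$ forces $K_\ba \subseteq K_\bb$, and a face of a cone contained in another face is automatically a face of that larger face. Because complementation within $[n-1]$ reverses inclusions, the hypothesis $\cI(\ba) \subseteq \cI(\bb)$ is equivalent to $S_\bb \subseteq S_\ba$, hence to $K_\ba$ being a face of $K_\bb$; Corollary~\ref{cor:type} then gives that $\tes_n(\ba)$ is a deformation of $\tes_n(\bb)$, proving the first assertion. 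For the second assertion, $\cI(\ba) = \cI(\bb)$ yields $S_\ba = S_\bb$ and therefore $K_\ba = K_\bb$, so Corollary~\ref{cor:type} delivers normal equivalence.

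I expect the only genuinely delicate point to be the bookkeeping in the first step: correctly reading off the tight inequalities at $(\ba, \0)$, matching them to the combinatorial datum $\cI(\ba)$, and then keeping straight that passing to complements reverses inclusions, so that a \emph{larger} $\cI$ corresponds to \emph{fewer} activated equalities and hence a \emph{larger} face. The only structural hypothesis one must not overlook is that Corollary~\ref{cor:type} requires both polytopes to be deformations of the \emph{same} base, which is precisely why fixing $\ba_0 \in \R_{>0}^n$ at the outset and citing Theorem~\ref{main} and Lemma~\ref{lem:tight} is essential. Once that setup is in place, the remaining arguments are routine facts about the face lattice of a polyhedral cone with an irredundant inequality description.
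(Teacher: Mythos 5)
Your proposal is correct and follows essentially the same route as the paper: both reduce the statement to Theorem~\ref{thm:tesdefcone} and Corollary~\ref{cor:type} by identifying, via Lemma~\ref{lem:tight}, the deforming vector of $\tes_n(\ba)$ as $(\ba,\0)$ and then locating the face of $\defcone(\tes_n(\ba_0))$ whose relative interior contains it. The only cosmetic difference is that the paper isolates the face identification as a separate lemma (its Lemma~\ref{lem:faces}, proved by exhibiting points in the sets \eqref{eq:int}), whereas you obtain the same conclusion inline from the standard ``minimal face = tight inequalities'' characterization.
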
 

One sees that Theorem \ref{main} is a special case of Theorem \ref{thm:gen} when taking $\bb \in \R_{>0}^n$. It is possible to revise our proof for Theorem \ref{main} to give a proof for Theorem \ref{thm:gen}. However, the arguments will be much more complicated. Instead, we will prove Theorem \ref{thm:gen} in the next section as a consequence of Theorem \ref{thm:tesdefcone} and Corollary \ref{cor:type}. 

\section{The deformation cone of Tesler polytopes}
\label{deftrans}

In this section, we give a proof for Theorem \ref{thm:tesdefcone}, which describes the deformation cone of $\tes_{n}(\ba_{0})$ for $\ba_{0} \in \R^{n}_{>0}$, and then prove Theorem \ref{thm:gen} by applying Corollary \ref{cor:type}. In order to prove Theorem \ref{thm:tesdefcone}, we need the following preliminary lemma. Recall that $\tU(n)$ is the image of $\U(n)$ under the projection which deletes the $(n,n)$-th entry. 
For convenience, for any $(\ba,\tilde{\bb}) \in \R^{n}\times \tU(n)$, where $\ba=(a_{1},\dots,a_{n})$ and $\tilde{\bb}=(\tilde{b}_{i,j})$, let 
\begin{equation}
  \label{eq:Qdef}
Q(\ba,\tilde{\bb}):=\{\bm \in \U(n)~~|~~L_{n}\bm=\ba \text{ and }-P_{n}\bm \leq \tilde{\bb}\}.
\end{equation}
Note that for arbitrary choices of $(\ba,\tilde{\bb})$, the above linear inequality description might not be tight. 
\begin{lem} \label{vertexcorres}
  Let $\ba_{0} \in \R_{> 0}^{n}$. Suppose $Q$ is a deformation of $\tes_n(\ba_0)$ with deforming vector $(\ba, \tilde{\bb})$. (Thus, $Q = Q(\ba,\tilde{\bb})$ and $(\ba, \tilde{\bb}) \in \defcone(\tes_{n}(\ba_0))$.)
  Then the following statements are true:
\begin{enumerate}[label={\rm (\arabic*)}]
  \item \label{itm:uniquev} For any $\bv \in \vert(\tes_{n}(\ba_0))$, there exists a unique $\bv'=(v'_{i,j}) \in \vert(Q(\ba,\tilde{\bb}))$ such that for all $1 \leq i \leq j \leq n$ where $(i,j) \neq (n,n)$ if $v_{i,j}=0$ then $v'_{i,j}=-\tilde{b}_{i,j}$ . 

  \item Let $\phi$ be a map from $\vert(\tes_{n}(\ba_0))$ to $\vert(Q(\ba,\tilde{\bb}))$ defined by $\phi(\bv)=\bv'$ where $\bv'$ is the unique vertex assumed by part \ref{itm:uniquev} above. Then for any pair of adjacent vertices $\bv_{1}$ and $\bv_{2}$ of $\tes_{n}(\ba_0)$, there exists a non-negative real number $r$ such that $\phi(\bv_{1})-\phi(\bv_{2})=r(\bv_{1}-\bv_{2})$, i.e, $\phi$ satisfies the condition described in Lemma \ref{combisoeq}/\ref{itm:phi}.
\end{enumerate}
\end{lem}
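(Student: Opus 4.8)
The plan is to observe that both parts of this lemma are specializations of the general deformation machinery to the concrete facet description \eqref{mintes} of $\tes_n(\ba_0)$, so the bulk of the work is setting up the right dictionary rather than any new computation. First I would record that in \eqref{mintes} we have $E = L_n$ and $G = -P_n$, whose rows are indexed by the pairs $(i,j)$ with $1 \le i \le j \le n$ and $(i,j) \neq (n,n)$; the $(i,j)$-th row contributes the inequality $-m_{i,j} \le 0$, so the associated facet of $\tes_n(\ba_0)$ is $F_{i,j} = \{m_{i,j} = 0\}$, and a vertex $\bv$ lies on $F_{i,j}$ exactly when $v_{i,j} = 0$. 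Since $Q = Q(\ba, \tilde{\bb})$ is the deformation with deforming vector $(\ba, \tilde{\bb})$, the same row of $G$ contributes $-m_{i,j} \le \tilde{b}_{i,j}$ for $Q$, so the hyperplane $\langle \bg_{i,j}, \bm \rangle = \tilde{b}_{i,j}$ is $\{m_{i,j} = -\tilde{b}_{i,j}\}$.

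For Part (1), I would set $S = \{(i,j) : v_{i,j} = 0\}$, the set of facets on which $\bv$ lies, and apply Definition \ref{defdef}/\eqref{itm:D2} directly: it guarantees that the intersection $Q \cap \{\bm : m_{i,j} = -\tilde{b}_{i,j} \text{ for } (i,j) \in S\}$ is a single vertex of $Q$, namely $\bv_{(\ba, \tilde{\bb})}$. Taking $\bv' := \bv_{(\ba, \tilde{\bb})}$ then produces a vertex with $v'_{i,j} = -\tilde{b}_{i,j}$ whenever $v_{i,j} = 0$, giving existence. Uniqueness follows because any vertex of $Q$ having this property necessarily lies in the same intersection, which is the single point $\bv_{(\ba, \tilde{\bb})}$.

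For Part (2), once $\phi(\bv) = \bv' = \bv_{(\ba, \tilde{\bb})}$ is identified in Part (1), the map $\phi$ is exactly the one appearing in Lemma \ref{philem}; hence Part (2) is immediate from that lemma, which states precisely that $\phi$ satisfies the condition in Lemma \ref{combisoeq}/\ref{itm:phi} and so supplies the required $r \ge 0$ for each pair of adjacent vertices. I do not expect a substantive obstacle here, since everything reduces to the general theory; the only points needing care are the sign bookkeeping in the dictionary (the facet $\{m_{i,j} = 0\}$ deforms to $\{m_{i,j} = -\tilde{b}_{i,j}\}$, not $\{m_{i,j} = \tilde{b}_{i,j}\}$) and confirming that the uniqueness assertion really is a free consequence of the intersection being a single point rather than something requiring a separate argument.
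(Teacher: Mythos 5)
Your proposal is correct and follows essentially the same route as the paper: identify the facets of $\tes_n(\ba_0)$ containing $\bv$ as those indexed by $(i,j)$ with $v_{i,j}=0$, invoke Definition \ref{defdef}/\eqref{itm:D2} to get existence and uniqueness of $\bv_{(\ba,\tilde{\bb})}$, and deduce Part (2) from Lemma \ref{philem}. The sign bookkeeping and the uniqueness observation you flag are exactly the (minor) points the paper's own proof relies on.
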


\begin{proof}
  Let $\bv$ be a vertex of $\tes_n(\ba_0).$ Recall that (\ref{mintes}) is a minimal linear inequality description for $\tes_{n}(\ba_{0})$ which means that each inequality in $-P_{n}\bm \leq \boldsymbol{0}$ is facet-defining. Hence, the set of facets of $\tes_{n}(\ba_{0})$ containing $\bv$ is \[\{H_{i,j}^{n} \cap \tes_{n}(\ba_{0}) ~~|~~v_{i,j}=0\}.\] By Definition \ref{defdef}/\eqref{itm:D2}, the vertex $\bv_{(\ba,\tilde{\bb})}$ of $Q(\ba,\tilde{\bb})$ is the unique point $\bv'$ satisfying the desired condition in \ref{itm:uniquev}. Then, the second part follows from Lemma \ref{philem}. 
\end{proof}

We describe a procedure of obtaining $\bv'=\phi(\bv)$ from $\bv$ in the following example.

\begin{ex} Let $\ba_0 = (1, 1, 1, 1)$, $\ba=(8,7,8,1),$ and 
  \[ \tilde{\bb}:=\begin{bmatrix}
-1 & 2 & -3 & -4 \\ 
  & -5 & 6 & 7 \\
  &   & -8 & 9 
\end{bmatrix}.\]
By using Theorem \ref{thm:tesdefcone}, one can check that $Q=Q(\ba, \tilde{\bb})$ is a deformation of $\tes_n(\1)$ with deforming vector $(\ba, \tilde{\bb})$. 
Let \[\bv:=\begin{bmatrix}
0 & 1 & 0 & 0 \\
  & 0 & 2 & 0 \\
  &   & 3 & 0 \\
  &   &   & 1 \\
\end{bmatrix}\]
which is a vertex of $\tes_{n}(\boldsymbol{1})$. By the proof of Lemma \ref{vertexcorres}, we know that $\bv'=\phi(\bv)$ is defined to be $\bv_{(\ba,\tilde{\bb})}$. Hence, we can find $\bv'$ in the following way: We first change zero entries in $\bv$ to corresponding $-\tilde{b}_{i,j}$'s and change the four non-zero entries to undetermined variables $w,x,y,z$:
\[\bv=\begin{bmatrix}
0 & 1 & 0 & 0 \\
  & 0 & 2 & 0 \\
  &   & 3 & 0 \\
  &   &   & 1 
\end{bmatrix} \quad \rightarrow \quad
\bv':=\begin{bmatrix}
1 & w  & 3 & 4 \\
        & 5 & x  &  -7 \\
        &         &  y & -9  \\
        &        &   &  z        
\end{bmatrix}.\]
We then use the hook sum conditions for $Q$ to determine $w,x,y,z$:
\[\begin{cases}
\hs_{1}(\bv')=1+w+3+4=8 \\
\hs_{2}(\bv')=5+x+(-7)-w=7 \\
\hs_{3}(\bv')=y+(-9)-(x+3)=8\\
\hs_{4}(\bv')=z-( (-9) + (-7) +4 )=1
\end{cases} 
\ \ \implies \ \
\begin{cases}
w=0\\
x=9\\
y=29\\
z=-11\\
\end{cases} 
\ \ \implies \ \
\bv'=\begin{bmatrix}
1 & 0  & 3 & 4 \\
        & 5 & 9  &  -7 \\
        &         &  29 & -9  \\
        &        &   &  -11        
\end{bmatrix}.\]
\end{ex}

\begin{proof}[Proof of Theorem \ref{thm:tesdefcone}]
Thanks to Theorem \ref{main}, without loss of generality, we can assume that $\ba_{0}=\boldsymbol{1}$. Let 
\[ A:=\{(\ba,\tilde{\bb})\in \R^{n}\times \tU(n))~~|~~\hs_{i}(\tilde{\bb}) \geq -a_{i} \text{ for all }1 \leq i \leq n-1  \}.\]
We will show that $\defcone(\tes_{n}(\boldsymbol{1}))=A$ by showing two-sided inclusion.

We first show that $\defcone(\tes_{n}(\boldsymbol{1})) \subseteq A$. Suppose $(\ba,\tilde{\bb}) \in \defcone(\tes_{n}(\boldsymbol{1}))$. Let $I$ be the $n \times n$ identity matrix and $\phi$ be the map described in Lemma \ref{vertexcorres}. It is clear that $I$ is a vertex of $\tes_{n}(\boldsymbol{1})$ by Theorem \ref{Tesler}. Let $\bw=\phi(I)$ be the vertex of $Q(\ba,\tilde{\bb})$ corresponding to $I$. Then, by Lemma \ref{vertexcorres}, we have that 
\[ w_{i,j} = -\tilde{b}_{i,j}, \quad \text{for all $1 \le i < j \le n$},\]
  and $w_{i,i}$'s can be determined using hook sum conditions: 
\[w_{1,1}-\tilde{b}_{1,2}-\cdots-\tilde{b}_{1,n}=a_{1},\]
\[(w_{i,i} -\tilde{b}_{i,i+1} - \cdots - \tilde{b}_{i,n})-(-\tilde{b}_{1,i}-\cdots-\tilde{b}_{i-1,i}) = a_{i}, \quad 2 \leq i \leq n. \]
From the above equations, we obtain 
\[ w_{i,i}=\hs_{i}(\tilde{\bb})-\tilde{b}_{i,i}+a_{i} \quad \text{for all $1 \leq i \leq n$.}\]
Since $\bw \in Q(\ba,\tilde{\bb})$, we have inequalities $w_{i,i} \geq -\tilde{b}_{i,i}$ for all $1\leq i \leq n-1$ which implies that $\hs_{i}(\tilde{\bb}) \geq -a_{i}$ for all $1 \leq i \leq n-1$. Therefore, $(\ba,\tilde{\bb})\in A$.

Next we show that $A \subseteq \defcone(\tes_{n}(\boldsymbol{1}))$. Assume that $(\ba,\tilde{\bb}) \in A$ and let $Q:=Q(\ba,\tilde{\bb})$. We will show that $Q$ is a deformation of $\tes_n(\1)$ with deforming vector $(\ba,\tilde{\bb})$. Since $(\ba,\tilde{\bb}) \in A$, we have $(\hs_{1}(\tilde{\bb})+a_{1},\dots,\hs_{n-1}(\tilde{\bb})+a_{n-1},0) \in \R_{\geq 0}^{n}$. Therefore, it follows from Lemma \ref{lem:tight} that
\begin{equation}
T:=\biggl{\{}\bm \in \U(n)~~\biggl{|}
  \begin{array}{c}
    L_{n}\bm= (\hs_{1}(\tilde{\bb})+a_{1},\dots,\hs_{n-1}(\tilde{\bb})+a_{n-1},0)    \\[2mm]
  -P_{n}\bm \leq \0 
  \end{array}  \biggl{\}}
  \label{eq:Tdef}
\end{equation}
  is a non-empty Tesler polytope and the linear inequality description above is tight.
  Let $\bb \in \U(n)$ be the upper triangular matrix obtained from $\tilde{\bb}$ by appending $\displaystyle \sum_{i=1}^{n-1} \tilde{b}_{i,n} -a_n$ as its $(n,n)$th-entry. Then 
  \[ L_n \bb = \Hs(\bb) = (\hs_1(\tilde{\bb}), \dots, \hs_{n-1}(\tilde{\bb}), -a_n) \text{ and } -P_n \bb = - \tilde{\bb}.\]
  It immediately follows that $Q = T - \bb$, is a translation of the Tesler polytope $T$. By Theorem \ref{main}, the polytope $T$ is a deformation of $\tes_n(\1)$, thus so is $Q$ by Remark \ref{rmk:translation}. Moreover, one sees that the linear inequality description \eqref{eq:Qdef} for $Q$ is precisely obtained from the linear inequality description \eqref{eq:Tdef} for $T$ by translating $\bm$ to $\bm - \bb$. Therefore, the tightness of \eqref{eq:Qdef} follows from the tightness of \eqref{eq:Tdef}.
  Hence, we conclude that $(\ba,\tilde{\bb}) \in \defcone(\tes_{n}(\boldsymbol{1}))$, completing the proof of the first assertion of the theorem.

  Finally, as we have shown above, for any $(\ba,\tilde{\bb}) \in A$, the polytope $Q(\ba,\tilde{\bb})$ is a translation of a Tesler polytope. Hence, the second statement of the theorem follows.
\end{proof}

We now proceed to prove Theorem \ref{thm:gen}. 
For all the discussion in the rest of this section, we assume $\ba_0 \in \R_{>0}^n.$  
As we mentioned in the introduction each inequality in the description \eqref{eq:tesdefcone} for $\defcone(\tes_n(\ba_0))$ defines a facet, but we have more than that:  
\begin{lem}\label{lem:faces} For each $I \subseteq [n-1]$, let 
\[ F_I := \left\{(\ba,\tilde{\bb})\in \R^{n}\times \tU(n)~~
\left|~~
\begin{array}{c} \hs_{i}(\tilde{\bb}) \ge -a_{i} \text{ for all $i \in I$}\\ 
  \hs_{i}(\tilde{\bb}) = -a_{i} \text{ for all $i \in [n-1]\setminus I$}
\end{array}
\right.
\right\}\]
be a face of $\defcone(\tes_n(\ba_0))$. Then $I \mapsto F_I$ is an inclusion-preserving bijection from subsets $I$ of $[n-1]$ and faces of $\defcone(\tes_n(\ba_0))$.

Furthermore, for any $\ba \in \R_{\ge 0}^n$, we have that $\cI(\ba) = I$ if and only if $(\ba, \0)$ is in the relative interior of $F_I$.
\end{lem}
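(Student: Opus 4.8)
The plan is to prove Lemma \ref{lem:faces} in two parts, first establishing the facial bijection and then characterizing the relative interiors via the $\cI(\ba)$ statement.

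For the first assertion, I would start from the observation noted in the paragraph preceding Theorem \ref{thm:tesdefcone}: the description \eqref{eq:tesdefcone} is minimal, so each of the $n-1$ inequalities $\hs_i(\tilde{\bb}) \ge -a_i$ is facet-defining. The cone $\defcone(\tes_n(\ba_0)) = A$ is thus a polyhedral cone (in fact a product of a linear subspace with a simplicial-type cone) cut out by $n-1$ linearly independent facet inequalities, since the functionals $(\ba,\tilde{\bb}) \mapsto \hs_i(\tilde{\bb}) + a_i$ for $1 \le i \le n-1$ are linearly independent (each involves the variable $a_i$, which does not appear in the others). For such a cone, every face is obtained by turning some subset of the defining inequalities into equalities, and because the inequalities are linearly independent, distinct subsets $I \subseteq [n-1]$ yield distinct faces $F_I$; turning the inequalities indexed by $[n-1]\setminus I$ into equalities produces exactly $F_I$. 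I would verify that $F_I$ is indeed a face (it is the intersection of $A$ with the supporting hyperplanes $\hs_i(\tilde{\bb}) = -a_i$ for $i \notin I$) and that $F_I \subseteq F_J$ if and only if $I \subseteq J$, giving the inclusion-preserving bijection. The key input here is the linear independence of the facet functionals, which guarantees the bijection is well-defined and exhaustive.

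For the second assertion, I would compute where the point $(\ba, \0)$ sits. Since $\tilde{\bb} = \0$, we have $\hs_i(\0) = 0$ for each $1 \le i \le n-1$, so the defining quantity $\hs_i(\tilde{\bb}) + a_i$ evaluates to simply $a_i$. The relative interior of $F_I$ consists of those points satisfying $\hs_i(\tilde{\bb}) + a_i > 0$ strictly for $i \in I$ and $\hs_i(\tilde{\bb}) + a_i = 0$ for $i \in [n-1]\setminus I$. Plugging in $\tilde{\bb} = \0$, these conditions become $a_i > 0$ for $i \in I$ and $a_i = 0$ for $i \in [n-1]\setminus I$. By definition $\cI(\ba) = \{i \in [n-1] \mid a_i > 0\}$, so $(\ba,\0)$ lies in the relative interior of $F_I$ precisely when $I = \cI(\ba)$, which is exactly the claimed equivalence.

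The main obstacle I anticipate is justifying that relative interiors of the faces $F_I$ are correctly described by strict inequalities on the $I$-indexed constraints and equalities on the rest. This requires care because $\defcone(\tes_n(\ba_0))$ is not full-dimensional in $\R^n \times \tU(n)$: its affine span (equivalently, its lineality directions together with the span of the facet normals) must be accounted for so that ``relative interior'' is taken inside the correct affine hull. I would address this by noting that the linear independence of the $n-1$ facet functionals means each facet inequality genuinely cuts the cone (none is implied by the others or by the lineality space), so that the standard description of relative interiors of faces of a cone in terms of its minimal inequality presentation applies directly. Once this is in place, the substitution $\tilde{\bb} = \0$ makes both directions of the equivalence immediate.
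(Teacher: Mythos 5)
Your proof is correct, and it reaches the conclusion by a slightly different route than the paper. The paper's proof takes for granted that every face of $\defcone(\tes_n(\ba_0))$ has the form $F_I$ and that $I\subseteq J$ forces $F_I\subseteq F_J$, and then does the real work by exhibiting an explicit witness: for each $I$ it picks $\ba\in\R_{\ge 0}^n$ with $\cI(\ba)=I$ and observes that $(\ba,\0)$ satisfies $\hs_i(\tilde{\bb})+a_i>0$ for $i\in I$ and $=0$ for $i\notin I$, so it lies in $F_I$ but in no $F_J$ with $I\subsetneq J$; this simultaneously proves injectivity and identifies the strict-inequality locus as the relative interior, from which the second assertion is read off. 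You instead observe that the $n-1$ functionals $(\ba,\tilde{\bb})\mapsto \hs_i(\tilde{\bb})+a_i$ are linearly independent (each is the unique one involving $a_i$), so the cone is linearly isomorphic to $\R_{\ge 0}^{n-1}$ times a subspace, and the entire face lattice together with the relative-interior description follows from the standard structure theory of such cones. Your argument is more structural and handles in one stroke the points the paper leaves implicit (exhaustiveness of the $F_I$ and the identification of relative interiors), at the cost of invoking that general theory; the paper's witness-point computation is more elementary and, not coincidentally, produces exactly the points $(\ba,\0)$ needed for the second assertion. Both are complete; your substitution $\tilde{\bb}=\0$, giving $a_i>0$ for $i\in I$ and $a_i=0$ otherwise, matches the paper's conclusion exactly.
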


\begin{proof}
It follows from the definition of $F_I$ that $I \subseteq J$ if and only if $F_I \subseteq F_J$. It is also clearly, every face of $\defcone(\tes_n(\ba_0))$ arises as an $F_I$ for some $I$. So in order to prove the first conclusion, it is left to show that if $I \subsetneq J$, then $F_I \neq F_J$. 
However, notice that if $\cI(\ba) = I$, then $(\ba,\0)$ belongs to the set
\begin{equation}\label{eq:int} \left\{(\ba,\tilde{\bb})\in \R^{n}\times \tU(n)~~
\left|~~
\begin{array}{c} \hs_{i}(\tilde{\bb}) > -a_{i} \text{ for all $i \in I$}\\ 
  \hs_{i}(\tilde{\bb}) = -a_{i} \text{ for all $i \in [n-1]\setminus I$}
\end{array}
\right.
\right\},
\end{equation}
which is a subset of $F_I$ but clearly has no intersection with $F_J$ if $I \subsetneq J$. Therefore, we conclude that $I \mapsto F_I$ is indeed an inclusion-preserving bijection from subsets $I$ of $[n-1]$ and faces of $\defcone(\tes_n(\ba_0))$.

Next, we observe that the above discussion implies that \eqref{eq:int} actually is the relataive interior of $F_I$. Then the second conslusion follows.
\end{proof}

\begin{proof}[Proof of Theorem \ref{thm:gen}]
  By Lemma \ref{lem:tight}, the Tesler polytope $\tes_n(\ba)$ is a deformation of $\tes_n(\ba_0)$ with deforming vector $(\ba, \0)$. Hence, 
  the conclusion follows from 
  Lemma \ref{lem:faces} and Corollary \ref{cor:type}. 
\end{proof}

\section{Flow polytopes that are deformations of Tesler polytopes} \label{flowsection}

The goal of this subsection is to prove Theorem \ref{charthm} which is a result about flow polytopes. We start by giving a formal definition of flow polytopes on the directed complete graph $\vec{K}_{n+1}$. 
Recall that $\vec{K}_{n+1}$ is the directed complete graph on the set of vertices $[n+1]$ with edge set $E_{n+1}$. A \emph{flow} on $\vec{K}_{n+1}$ is a function $f:E_{n+1} \rightarrow \R_{\geq 0}$ that assigns a non-negative real number to each edge of $\vec{K}_{n+1}$. Given a flow $f$, the \emph{net flow} of $f$ on vertex $i$ is defined to be \[f^{i}:=\displaystyle\sum_{j>i}f(i,j)-\sum_{j<i}f(j,i), \qquad \text{ for all }1\leq i \leq n+1.\] 
Note that $f^{n+1}=-\sum_{1 \leq i \leq n}f^{i}$. Therefore, we often neglect the net flow on the vertex $n+1$, and only discuss net flows on other vertices.

\begin{defn}\label{defn:flow} 
For $\ba=(a_{1},\dots,a_{n}) \in \R^{n}$, 
the \emph{flow polytope on $\vec{K}_{n+1}$} with net flow $\ba$ is defined to be 
\[\flow_{n}(\ba)=\{f:E_{n+1} \rightarrow \R_{\geq 0}~~|~~f^{i}=a_{i} \text{ for all }1\leq i \leq n\}.\]
\end{defn}

Since $\U(n)=\R^{E_{n+1}}$, any flow $f$ on $\vec{K}_{n+1}$ can be considered as an element in $\U(n)$. 
Therefore, we have a natural correspondence between elements in $\U(n)$ and flows on $\vec{K}_{n+1}.$

\begin{ex} 
  Let $\bm \in \U(3)$ be the same upper triangular matrix as in Example \ref{ex:matrix}. 
  The picture below shows $\bm$ together with its corresponding flow $f$ on $\vec{K}_4$. Note that on the right, the number on each edge $e$ is the number $f(e)$ assigned to $e,$ and the number below each vertex $i$ is the net flow $f^i$ of $f$ on $i.$

\begin{figure}[h] 

\centering
$\bm=\begin{bmatrix}
1 & {\color{blue}2} & 3 \\
 & {\color{red}4} & {\color{red}5} \\
 & &10 \\
\end{bmatrix} \quad \longleftrightarrow \quad 
\raisebox{-3em}{\begin{tikzpicture}[scale=1.5]
\tikzset{vertex/.style = {shape=circle,fill, draw, inner sep=2pt, scale=0.6}}

\node[vertex] (a) at (0,0) [label={below:$\circled{6}$}]{};
\node[vertex] (b) at (1,0) [label={below:$\circled{7}$}]{};
\node[vertex] (c) at (2,0) [label={below:$\circled{2}$}]{};
\node[vertex] (d) at (3,0) {};

\node[scale=0.8] at (3,-0.3) {$\circled{-15}$};

\node at (0.5,0) {{\color{blue}2}};
\node at (1.5,0) {{\color{red}5}};
\node at (2.5,0) {10};
\node at (1,0.5) {3};
\node at (2,0.5) {{\color{red}4}};
\node at (1.5,0.8) {1};

\draw[->] (a) to (b) ;
\draw[->] (b) to (c) ;
\draw[->] (a) [bend left=60] to node[auto] {} (c);
\draw[->] (a) [bend left=60] to node[auto] {} (d);
\draw[->] (b) [bend left=60] to node[auto] {} (d);
\draw[->] (c) to node[auto] {} (d) ;
The numbers below the vertices indicate the net flows on the corresponding vertices and the numbers above the edges indicate the weight on the corresponding edges.

\end{tikzpicture}}$

\end{figure}
As discussed in Example \ref{ex:matrix}, we have $\hs(\bm)=(6,7,2)$. Thus, we have $f^{i}=\hs_{i}(\bm)$ for $i=1, 2, 3$. 
\end{ex}

It is not hard to see that in general if $\bm \in \U(n)$ is corresponding to the flow $f$ on $\vec{K}_{n+1}$, then $f^{i}=\hs_{i}(\bm)$ for $1 \leq i \leq n$. 
  Therefore, in the matrix notation, we can write the flow polytope with net flow $\ba$ as:
\begin{equation}\label{eq:defnflow}
 \flow_{n}(\ba)=\{\bm \in \U(n)~~|~~L_{n}\bm=\ba \text{  and  } -\bm \leq \boldsymbol{0}\}.
\end{equation} 

In order to be consistent with the bold fonts we have been using for elements in $\U(n),$ below we will use $\beff$ instead of $f$ to denote a flow in $\flow_n(\ba).$
 
One sees that if $\ba \in \R_{\ge 0}^n$, then $\tes_n(\ba)= \flow_n(\ba).$ However, since for flow polytopes, we are allowed to use $\ba$ with negative entries, the family of flow polytopes contains more polytopes than the family of Tesler polytopes.
For convenience, for the rest of the paper, we fix $\ba_0$ as an arbitrary element in $\R_{>0}^n$.
By Theorem \ref{thm:tesdefcone}, any deformation of $\tes_n(\ba_{0})$ is a Tesler polytope up to a translation. Therefore, it is meaningful to ask when a flow polytope is a deformation of $\tes_n(\ba_{0})$. Theorem \ref{charthm} provides an answer to this question.  

\begin{rmk}\label{rmk:redundant}
  Description \eqref{eq:defnflow} is different from
    \begin{equation}
\{\bm \in \U(n)~~|~~L_{n}\bm=\ba \text{  and  } -P_{n}\bm \leq \boldsymbol{0}\}.
\label{eq:flowmatrix0},
\end{equation}
because \eqref{eq:defnflow} has one additional inequality $m_{n,n} \ge 0$. Only when this inequality is redundant in the description \eqref{eq:defnflow} for $\flow_n(\ba)$, we can describe $\flow_n(\ba)$ by \eqref{eq:flowmatrix0}, and then it is possible for $\flow_n(\ba)$ to be a deformation of $\tes_n(\ba_{0}).$ If $m_{n,n} \ge 0$ is not a redundant inequality in \eqref{eq:defnflow}, then $\flow_n(\ba)$ cannot be described by \eqref{eq:flowmatrix0}, and thus cannot be a deformation of $\tes_n(\ba_{0})$.
\end{rmk}

Before we can prove Theorem \ref{charthm}, we need to clarify and define conditions given in the statement of the theorem. First, one observes that
\[\flow_{n}(\ba) \text{ is non-empty if and only if }
\sum_{i =1}^k a_{i} \geq 0 \text{ for all }1\leq k \leq n.\]
For convenience, we let $A_{n}$ be the set of all $\ba \in \R^{n}$ that satisfies the above condition.

\begin{defn}\label{defn:critical} 
  Let $\ba = (a_1, \dots, a_n) \in A_n$. For each $1 \le i \le n-1$, if $a_i > 0$ and $a_i+a_{i+1} = 0$, we say the positive entry $a_i$ is \emph{voided}.

We say $l$ ($1 \le l \le n-1$) is the \emph{critical position} of $\ba$ if $a_l$ is the first positive entry in $\ba$ that is not voided. If such a positive entry does not exist, we say $l=n$ is the \emph{critical position} of $\ba$.
\end{defn}
The following lemma gives a characterization for the entries that appear before the critical position of a point $\ba \in A_n$. 

\begin{lem}\label{lem:charcpp}
  Let $\ba = (a_1, \dots, a_n) \in A_n$. Suppose $l$ is the critical position of $\ba.$ Then for each $1 \le i < l$, exactly one of the following three situations happens:
  \begin{enumerate}
    \item $a_i=0$.
    \item $a_i$ is a voided positive entry.
    \item $a_i < 0$ and $a_{i-1}$ is a voided positive entry.
  \end{enumerate}
  Moreover, if $1 \le l \le n-1$, then $\displaystyle \sum_{i=1}^{l-1} a_i = 0,$ and hence $a_l + a_{l+1} > 0$. 

\end{lem}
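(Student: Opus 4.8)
The plan is to prove Lemma \ref{lem:charcpp} by directly analyzing the definition of the critical position together with the non-emptiness condition $\ba \in A_n$. Recall that the critical position $l$ is defined as the index of the first positive entry that is \emph{not} voided (or $l=n$ if every positive entry is voided). So for each $1 \le i < l$, the entry $a_i$ cannot be a non-voided positive entry. The first step is to split into the cases $a_i > 0$, $a_i = 0$, and $a_i < 0$, and rule out the impossible sub-cases.

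If $a_i > 0$ with $i < l$, then by definition of $l$ the entry $a_i$ must be voided, giving situation (2). If $a_i = 0$, that is situation (1) and there is nothing to prove. The interesting case is $a_i < 0$: here I would show that $a_{i-1}$ must be a voided positive entry, which is situation (3). For this, I would use the non-emptiness condition $\sum_{j=1}^{k} a_j \ge 0$ applied at $k = i-1$ and $k = i$. Since $a_i < 0$, we have $\sum_{j=1}^{i-1} a_j \ge \sum_{j=1}^{i-1} a_j + a_i = \sum_{j=1}^i a_j \ge 0$, so in particular $\sum_{j=1}^{i-1} a_j \ge -a_i > 0$, forcing $a_{i-1}$ (or some earlier entry) to be positive; the key point I must pin down is that it is precisely $a_{i-1}$ that is positive and that $a_{i-1} + a_i = 0$ so that $a_{i-1}$ is voided. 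I expect to establish this via an inductive or minimal-counterexample argument: take the smallest $i < l$ violating the claim and derive a contradiction, leveraging that all earlier indices already fall into one of the three situations.

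The main obstacle, and the heart of the argument, is showing that the three situations tile the initial segment in the \emph{rigid} pattern they describe — namely that a negative entry $a_i$ is always immediately preceded by a voided positive entry $a_{i-1}$ whose positive value exactly cancels it (so $a_{i-1} = -a_i$), rather than merely being preceded by some positive mass spread out earlier. I would argue this by induction on $i$, maintaining the invariant that the partial sums $\sum_{j=1}^{k} a_j$ vanish at the appropriate breakpoints. Concretely, if every index before $i$ is of type (1), (2), or (3), one checks that the partial sums $\sum_{j=1}^{k} a_j$ are forced to be $0$ at each $k < l$ where $a_k$ is not the first half of a voided pair; combined with $a_i < 0$ and $\sum_{j=1}^i a_j \ge 0$, this isolates $a_{i-1}$ as the unique source of the positive mass and forces $a_{i-1}+a_i = 0$.

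For the ``Moreover'' clause, assume $1 \le l \le n-1$. Using the structural description just proved, the entries $a_1, \dots, a_{l-1}$ come in cancelling voided pairs $(a_{i-1}, a_i)$ with $a_{i-1} + a_i = 0$ interspersed with zeros, so the telescoping sum gives $\sum_{i=1}^{l-1} a_i = 0$. Then the non-emptiness condition at $k = l+1$ yields $\sum_{i=1}^{l+1} a_i \ge 0$, i.e. $a_l + a_{l+1} \ge 0$; but $a_l$ is by definition a non-voided positive entry, which by Definition \ref{defn:critical} means precisely that $a_l > 0$ and $a_l + a_{l+1} \ne 0$, so $a_l + a_{l+1} > 0$. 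I would close by noting that the edge case behavior at the very start (when the first non-zero entry is negative) is excluded by $\ba \in A_n$, since $\sum_{j=1}^{i} a_j \ge 0$ cannot hold if the first nonzero entry is negative, which keeps the inductive base case consistent.
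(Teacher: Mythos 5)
Your proof is correct and takes essentially the same route as the paper, which simply asserts that the first part ``follows directly from the definition'' and the second ``immediately from the first part'': your induction on the partial sums $\sum_{j=1}^k a_j$, using $\ba \in A_n$ to rule out a negative entry not immediately preceded by a voided positive one, is precisely the argument the paper leaves implicit. The only step worth making explicit in your telescoping argument for $\sum_{i=1}^{l-1}a_i=0$ is that $a_{l-1}$ itself cannot be a voided positive entry when $l \le n-1$, since its cancelling partner would then be $a_l<0$, contradicting $a_l>0$.
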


\begin{proof}
  The first part (which charaterizes entries $a_i$ for $i < l$) follows directly from the definition of the critical position, and the second conclusion follows immediately from the first part. 
\end{proof}

\begin{lem} \label{lem:beforecp}
  Let $\ba=(a_{1},\dots,a_{n})\in A_{n}$ where $l$ is the critical position of $\ba$ and let $a_{i_{1}},a_{i_{2}},\dots,a_{i_{m}}$ be all the voided positive entries before the critical position where $1\le i_1 < i_2 < \dots < i_m < l$. Then all flows $\beff=(f_{i,j}) \in \flow_{n}(\ba)$ have \emph{fixed} entries in their first $l-1$ rows as described below: 
\begin{enumerate}
  \item For each $1 \leq p \leq m$, the $({i_{p},i_{p}+1})$-th entry $f_{i_{p},i_{p}+1}$ of $\beff$ is always $a_{i_{p}}$.
  \item The remaining entries in the first $l-1$ rows of $\beff$ are zeros.
\end{enumerate}
\end{lem}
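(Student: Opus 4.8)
The plan is to fix an arbitrary flow $\beff = (f_{i,j}) \in \flow_n(\ba)$ and prove the assertion by induction on the row index $i$ from $1$ up to $l-1$. The inductive claim for row $i$ is exactly the dichotomy in the statement: if $i \in \{i_1, \dots, i_m\}$, say $i = i_p$, then $f_{i,i+1} = a_{i_p}$ and every other entry of row $i$ (including the diagonal entry $f_{i,i}$, i.e. the edge to $n+1$) is zero; otherwise every entry of row $i$ is zero. The main tools are the net-flow (hook sum) identity $\hs_i(\beff) = a_i$, which I will read as ``outflow minus inflow'' at vertex $i$, namely $\bigl(f_{i,i} + \sum_{i < j \le n} f_{i,j}\bigr) - \sum_{j<i} f_{j,i} = a_i$; the non-negativity of all entries of $\beff$; and the classification of the entries $a_i$ for $i < l$ provided by Lemma \ref{lem:charcpp}. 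That classification splits the inductive step into three mutually exclusive cases: $a_i = 0$; $a_i = a_{i_p} > 0$ is a voided positive entry (so $i = i_p$ and $a_{i+1} = -a_{i_p}$); or $a_i < 0$ is the negative partner of a voided positive entry (so $i = i_p + 1$ for some $p$).

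\textbf{The two easy cases.} First I would compute the inflow $\sum_{j<i} f_{j,i}$ into vertex $i$ using the induction hypothesis on rows $1, \dots, i-1$: by that hypothesis, $f_{j,i}$ can be nonzero only when $j = i_q$ and $i = i_q + 1$, in which case $f_{j,i} = a_{i_q}$. Hence the inflow equals $a_{i_p} = -a_i$ when $i = i_p + 1$ is a negative partner, and equals $0$ when $i$ is a zero entry (these two cases are disjoint since $a_i < 0$ versus $a_i = 0$). In either case the net-flow identity gives total outflow $= a_i + (\text{inflow}) = 0$, and since every outgoing edge weight is non-negative, all entries of row $i$ vanish, as claimed.

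\textbf{The hard case, and the main obstacle.} The crux is the voided positive row $i = i_p$, where $a_i = a_{i_p} =: c > 0$. Here the inflow is again $0$ (by the same induction-hypothesis computation, noting a voided positive position is never a negative-partner position), so the net-flow identity only tells us that the \emph{total} outflow from vertex $i$ is $c$; it does not by itself force this outflow to be concentrated on the single edge $(i,i+1)$. To pin it down I would invoke the net-flow condition at the \emph{next} vertex $i+1$, which satisfies $a_{i+1} = -c$. Writing $\hs_{i+1}(\beff) = -c$ and moving the (non-negative) outgoing terms of vertex $i+1$ to the other side gives $\sum_{j \le i} f_{j,i+1} \ge c$. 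The induction hypothesis again kills $\sum_{j<i} f_{j,i+1}$ (no earlier row can feed vertex $i+1$, since that would require a voided positive at position $i = i_p$ feeding $i+1$, which is precisely the edge we are solving for), leaving $f_{i,i+1} \ge c$. Combined with the bound $f_{i,i+1} \le (\text{total outflow}) = c$ from the previous paragraph, this forces $f_{i,i+1} = c = a_{i_p}$ and hence all remaining outgoing edges of vertex $i$ to be zero, completing the inductive step. This coupling of the conservation laws at vertices $i$ and $i+1$ is the only delicate point; everything else is bookkeeping. (One may also view the argument through the cut identity $\sum_{i=1}^k a_i = \sum_{i \le k < j} f(i,j)$, which makes the mechanism transparent when $\sum_{i=1}^{k} a_i = 0$, but the vertex-$(i+1)$ formulation above has the advantage of being uniform and of handling the boundary situation $i_p + 1 = n$ without a separate argument.)
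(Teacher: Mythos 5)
Your proposal is correct and follows essentially the same route as the paper: the paper also deduces that the rows with zero hook sum vanish from non-negativity, and pins down $f_{i_p,i_p+1}=a_{i_p}$ by exactly your sandwich argument, reading $f_{i_p,i_p+1}\le a_{i_p}$ from the hook sum at vertex $i_p$ and $f_{i_p,i_p+1}\ge a_{i_p}$ from the hook sum at vertex $i_p+1$. The only cosmetic difference is that you organize this as a row-by-row induction with an explicit three-way case split from Lemma \ref{lem:charcpp}, whereas the paper treats the block up to $i_1+1$ and then says ``iterate''; your version just makes that iteration precise.
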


\begin{proof} Using Lemma \ref{lem:charcpp} together with the hypothesis of this lemma, we obtain that
  \[ a_i = \begin{cases} -a_{i+1}, \quad & \text{if $i = i_p$ for some $1 \le p \le m$;} \\
      0, \quad & \text{if $1 \le i < l$ and $i \neq i_p$ or $i_p+1$ for all $1 \le p \le m$.}
  \end{cases}\]
Since the first $i_{1}-1$ entries of $\ba$ are zeros and all the entries in $\beff$ are non-negative, we conclude that the first $i_{1}-1$ rows of $\beff$ have to be zero rows. In particular,
  \[ f_{i, i_1} = 0, \quad \text{for all $1 \le i \le i_1-1$}.\]
  Hence, the $i_{1}$-th and the $(i_1+1)$-st hook sums of $\beff$ are 
  \begin{align}
    \hs_{i_{1}}(\beff)=& \ f_{i_{1},i_{1}}+\dots+f_{i_{1},n}-(f_{1,i_{1}}+\cdots+f_{i_{1}-1,i_{1}})=f_{i_{1},i_{1}}+\dots+f_{i_{1},n}=a_{i_{1}},   \label{eq:hk1} \\
    \hs_{i_{1}+1}(\beff)=& \ f_{i_{1}+1,i_{1}+1}+\dots+f_{i_{1}+1,n}-(f_{1,i_{1}+1}+\cdots+f_{i_{1},i_{1}+1})  \label{eq:hk2} \\
=& \ f_{i_{1}+1,i_{1}+1}+\dots+f_{i_{1}+1,n}-f_{i_{1},i_{1}+1}= a_{i_{1}+1}=-a_{i_{1}}. \nonumber
\end{align}
Using the fact that $\beff$ has non-negative entries again, one sees that \eqref{eq:hk1} implies $f_{i_{1},i_{1}+1} \leq a_{i_{1}}$ and \eqref{eq:hk2} implies $f_{i_{1},i_{1}+1} \geq a_{i_{1}}$.
Thus, $f_{i_{1},i_{1}+1}=a_{i_{1}}$. This forces the remaining entries in the $i_{1}$-th and $(i_{1}+1)$-st rows are zeros. We have shown that the first $i_1+1$ rows of $\beff$ are given as described by the lemma. By iterating similar arguments, one can deduce that all the first $l-1$ rows are as described. 
\end{proof}
The following two propositions are the key results that will be used in our proof for Theorem \ref{charthm}.

\begin{prop}\label{prop:flowtrans}
  Let $\ba = (a_1, \dots, a_n) \in A_n$ and suppose $l$ is the critical position of $\ba$. 
  \begin{enumerate}
    \item\label{itm:flowtrans1} If $1 \le l < n,$ then there exists $\hat{\ba}= (\hat{a}_1, \dots, \hat{a}_n) \in A_n$ satisfying
      \[ \hat{a}_1=\hat{a}_2 = \cdots = \hat{a}_{l-1}=0, \ \hat{a}_l > 0, \ \hat{a}_{l+1} \ge 0, \text{ and $\hat{a}_i = a_i$ for $l+2 \le i \le n$.}  \]
  such that $\flow_{n}(\ba)$ is a translation of $\flow_{n}(\hat{\ba})$. 
\item\label{itm:flowtrans2} If $l = n,$ then $\flow_{n}(\ba)$ is a point. 
  \end{enumerate}
\end{prop}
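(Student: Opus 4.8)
The plan is to handle the two cases of Proposition~\ref{prop:flowtrans} using Lemma~\ref{lem:beforecp}, which already pins down the entire first $l-1$ rows of every flow $\beff \in \flow_n(\ba)$. The strategy is to \emph{fold} the fixed flow in those first rows into a translation vector, so that what remains is a flow polytope whose net flow vector has been modified only in positions $l$ and $l+1$.

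For part~\eqref{itm:flowtrans1}, I would first invoke Lemma~\ref{lem:beforecp} to write down the common fixed matrix, say $\bs \in \U(n)$, whose $(i_p, i_p+1)$-entry is $a_{i_p}$ for each voided positive entry and whose other entries in the first $l-1$ rows (and everything below) are zero. Every $\beff \in \flow_n(\ba)$ agrees with $\bs$ on its first $l-1$ rows, so $\beff - \bs$ has zero first $l-1$ rows and non-negative entries in the remaining rows. The key computation is to check what net flow vector $\beff - \bs$ realizes: applying $\Hs$ (equivalently $L_n$) to $\bs$ and subtracting, I expect the hook sums to be unchanged in positions $l+2, \dots, n$ (since $\bs$ is supported strictly before column $l+1$), while positions $1, \dots, l-1$ become $0$, and positions $l$ and $l+1$ absorb the contribution of $\bs$. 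Concretely, the voided entries contribute to columns $i_p + 1 \le l$, so removing them raises the net flow at position $l$ (and possibly $l+1$) by the total of the voided positive entries. Using Lemma~\ref{lem:charcpp}, which gives $\sum_{i=1}^{l-1} a_i = 0$ and $a_l + a_{l+1} > 0$, I can verify that the resulting vector $\hat\ba := \Hs(\beff - \bs)$ is independent of $\beff$, has $\hat a_1 = \dots = \hat a_{l-1} = 0$, satisfies $\hat a_i = a_i$ for $i \ge l+2$, and has $\hat a_l + \hat a_{l+1} = a_l + a_{l+1} > 0$ with $\hat a_l > 0$ and $\hat a_{l+1} \ge 0$. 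Then the map $\beff \mapsto \beff - \bs$ is an affine bijection $\flow_n(\ba) \to \flow_n(\hat\ba)$, exhibiting $\flow_n(\ba)$ as the translation $\flow_n(\hat\ba) + \bs$, and one checks $\hat\ba \in A_n$ directly from its partial sums.

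For part~\eqref{itm:flowtrans2}, when $l = n$ the critical position being $n$ means every positive entry among $a_1, \dots, a_{n-1}$ is voided, so Lemma~\ref{lem:beforecp} (with $l$ replaced by $n$, i.e. applied through the $(n-1)$-st row) fixes the entire first $n-1$ rows of every flow. The only remaining free entry is $f_{n,n}$, which is then forced by the single hook sum condition $\hs_n(\beff) = a_n$; hence $\flow_n(\ba)$ consists of a single point. I would state this by noting that Lemma~\ref{lem:beforecp}'s argument determines all but the last diagonal entry, and that entry is uniquely determined by the last net-flow equation.

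The main obstacle I anticipate is the bookkeeping in part~\eqref{itm:flowtrans1}: carefully tracking how the fixed entries $f_{i_p, i_p+1} = a_{i_p}$ redistribute the net flow when subtracted, and confirming that exactly positions $l$ and $l+1$ (and no earlier or later position) are affected, so that $\hat a_i = a_i$ holds for all $i \ge l+2$ and the first $l-1$ coordinates vanish. This requires keeping close track of which columns the voided-entry positions $i_p + 1$ land in relative to $l$; Lemma~\ref{lem:charcpp} should make this clean, since it guarantees the voided entries and the negative entries before $l$ exactly cancel in partial sums. Everything else is routine verification that the translated description still defines a flow polytope and that $\hat\ba$ lies in $A_n$.
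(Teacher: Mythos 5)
Your treatment of part~\eqref{itm:flowtrans2} matches the paper's, and the first step of part~\eqref{itm:flowtrans1} --- folding the fixed first $l-1$ rows guaranteed by Lemma~\ref{lem:beforecp} into a translation --- is exactly the paper's first move. However, there is a genuine gap in part~\eqref{itm:flowtrans1}, stemming from a miscalculation of $\Hs(\bs)$. Every voided pair sits at positions $(i_p, i_p+1)$ with $i_p+1 \le l-1$ (one cannot have $i_p+1=l$, since that would force $a_l=-a_{i_p}<0$, contradicting $a_l>0$). Hence $\Hs(\be_{i_p,i_p+1})=\be_{i_p}-\be_{i_p+1}$ is supported entirely in positions $1,\dots,l-1$, so subtracting $\bs$ changes nothing at positions $l$ and $l+1$: they do not ``absorb'' any contribution, and your translated polytope is $\flow_n(0,\dots,0,a_l,a_{l+1},\dots,a_n)$ with $\hat a_{l+1}=a_{l+1}$ unchanged. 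Your assertion that $\hat a_{l+1}\ge 0$ is therefore unjustified and in fact false in general: for example $\ba=(2,-1,5)\in A_3$ has critical position $l=1$, no voided entries, so your construction returns $\hat\ba=\ba$ with $\hat a_2=-1<0$.

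The paper closes this case with a second translation that your argument is missing. When $a_{l+1}<0$, one computes the $(l+1)$-st hook sum of any $\bg$ in the intermediate polytope $\flow_n(\ba')$ with $\ba'=(0,\dots,0,a_l,a_{l+1},\dots,a_n)$: since the first $l-1$ rows of $\bg$ vanish, $\hs_{l+1}(\bg)=g_{l+1,l+1}+\cdots+g_{l+1,n}-g_{l,l+1}=a_{l+1}$ forces $g_{l,l+1}\ge -a_{l+1}>0$ for every flow. One may then translate by $-a_{l+1}\,\be_{l,l+1}$ (the paper's Lemma~\ref{lem:translation}) to reach $\flow_n(0,\dots,0,a_l+a_{l+1},0,a_{l+2},\dots,a_n)$, and Lemma~\ref{lem:charcpp} guarantees $a_l+a_{l+1}>0$, so this vector is a valid $\hat\ba$. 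Adding this case split (with the uniform lower bound on $g_{l,l+1}$, which is the one new idea your proposal lacks) would complete your proof.
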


\begin{prop} \label{prop:nondeform}
Let $n \geq 3$ and $\ba=(a_{1},a_{2},\cdots,a_{n}) \in A_{n}$. 
  Suppose for some integer $1 \le l \le n-1$, we have $a_1=a_2 = \cdots = a_{l-1}=0$, $a_l > 0$, and $a_{l+1} \ge 0$. (Note that this implies that $l$ is the critical position of $\ba.$) 
Assume further that $a_{m}$ is the first negative entry of $\ba$, where $l+2 \le m \le n.$
Then $\flow_n(\ba)$ is not a deformation of $\tes_n(\ba_{0})$.
\end{prop}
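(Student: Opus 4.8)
The plan is to assume, for contradiction, that $\flow_n(\ba)$ is a deformation of $\tes_n(\ba_0)$ and to derive an impossibility by analyzing the entries of the $m$-th column. The entry point is the second assertion of Theorem \ref{thm:tesdefcone}: every deformation of $\tes_n(\ba_0)$ is a translation of a Tesler polytope. Thus if $\flow_n(\ba)$ were a deformation we could write $\flow_n(\ba) = \tes_n(\bc) + \bt$ for some $\bc \in \R_{\ge 0}^n$ and some $\bt = (t_{i,j}) \in \U(n)$. Since every entry of a point of $\tes_n(\bc)$ is non-negative, translating gives the pointwise lower bound $m_{i,j} \ge t_{i,j}$ for every $\bm = (m_{i,j}) \in \flow_n(\ba)$, so that $t_{i,j} \le \min\{\, m_{i,j} : \bm \in \flow_n(\ba)\,\}$ for each $(i,j)$. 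I will then squeeze the column sum $\sum_{j<m} t_{j,m}$ between two incompatible bounds. (Note that I only use the ``deformation $\Rightarrow$ translated Tesler polytope'' direction, so no converse is needed.)

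For the \emph{lower} bound, consider the point $\bw_{\mathrm{diag}} \in \tes_n(\bc)$ whose diagonal entries are $c_1, \dots, c_n$ and whose off-diagonal entries vanish; this lies in $\tes_n(\bc)$ precisely because $\bc \ge 0$. Then $\bw_{\mathrm{diag}} + \bt \in \flow_n(\ba)$, so it has non-negative entries and satisfies $\hs_m(\bw_{\mathrm{diag}} + \bt) = a_m$. Writing this out, the $(j,m)$-entries for $j<m$ are exactly $t_{j,m}$, while the diagonal term $c_m + t_{m,m}$ and the entries $t_{m,j}$ for $j>m$ are all non-negative; rearranging $\hs_m(\bw_{\mathrm{diag}}+\bt)=a_m$ yields $\sum_{j<m} t_{j,m} \ge -a_m > 0$.

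For the \emph{upper} bound I will exhibit explicit flows making the individual column-$m$ entries small. Let $s_k = \sum_{i=1}^k a_i$, which are all non-negative since $\ba \in A_n$. The ``staircase'' flow $\beff$ with $f_{i,i+1} = s_i$ for $i<n$, $f_{n,n} = s_n$, and all other entries zero, lies in $\flow_n(\ba)$ and has vanishing $(j,m)$-entry for every $j \le m-2$; hence $t_{j,m} \le 0$ for all $j \le m-2$. The remaining entry $(m-1,m)$ is where the real work lies, and I expect it to be \textbf{the main obstacle}: this entry need not vanish anywhere on $\flow_n(\ba)$, because when $s_{m-2} < -a_m$ the vertices below $m-1$ cannot by themselves supply the demand $-a_m$ at vertex $m$, and $m-1$ can only feed $m$ through the single edge $(m-1,m)$; thus $f_{m-1,m}$ is forced to be positive. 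To bound it, I construct a flow routing as much of vertex $m$'s demand as possible through the edge $(m-2,m)$ and sending the surplus of vertex $m-1$ past $m$ to vertex $m+1$; one checks this is a legitimate element of $\flow_n(\ba)$ with $(m-1,m)$-entry equal to $\max(0,\,-a_m - s_{m-2})$. Because $m \ge l+2$ forces $m-2 \ge l$, and $a_l>0$ with $a_l,\dots,a_{m-1}\ge 0$ (as $m$ is the first negative entry), we have $s_{m-2} \ge a_l > 0$, so this value is \emph{strictly} less than $-a_m$. Hence $t_{m-1,m} \le \min\{m_{m-1,m}\} < -a_m$, and combining with the previous step gives $\sum_{j<m} t_{j,m} = \sum_{j\le m-2} t_{j,m} + t_{m-1,m} < -a_m$.

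The two bounds $\sum_{j<m} t_{j,m} \ge -a_m$ and $\sum_{j<m} t_{j,m} < -a_m$ are contradictory, so $\flow_n(\ba)$ is not a translation of any Tesler polytope, and therefore not a deformation of $\tes_n(\ba_0)$. The delicate point is entirely the entry $(m-1,m)$: the naive hope that all column-$m$ entries vanish simultaneously fails because the constraint $\sum_{j<m} m_{j,m} \ge -a_m$ forbids it, and even individually $(m-1,m)$ can be bounded away from zero; the fix is to not try to zero it but to estimate $\min\{m_{m-1,m}\}$ sharply and pit it against the lower bound from $\bw_{\mathrm{diag}}+\bt$, the strict positivity of $s_{m-2}$ (which is exactly where the normalization $a_l>0$ and $m\ge l+2$ are used) being what forces the incompatibility. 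For the special case $m=n$ one may alternatively invoke Remark \ref{rmk:redundant} directly, since then $a_n<0$ makes $m_{n,n}\ge 0$ non-redundant, but the argument above applies uniformly to all $l+2 \le m \le n$.
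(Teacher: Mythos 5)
Your proof is correct, and it reaches the contradiction by a route that differs from the paper's in its entry point and in how strictness is extracted, though the underlying strategy --- bound the column-$m$ minima $\min_{\bm \in \flow_n(\ba)} m_{j,m}$ from above by explicit flows and pit them against a lower bound of $-a_m$ forced by the structure of deformations --- is the same. The paper works with the \emph{tight} deforming vector $\tilde{\bb}$ (so $-\tilde{b}_{j,m}$ \emph{is} the minimum of $m_{j,m}$), invokes the facet description of the deformation cone from Theorem \ref{thm:tesdefcone} to get the constraint $\hs_m(\tilde{\bb}) \ge -a_m$, builds a single flow (Lemma \ref{flowlem}) whose column-$m$ entries sum to exactly $-a_m$ with $f_{l,m}>0$, and then perturbs by $\epsilon$ to push the sum of minima strictly below $-a_m$. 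You instead invoke only the second assertion of Theorem \ref{thm:tesdefcone} (deformation $\Rightarrow$ translated Tesler polytope $\tes_n(\bc)+\bt$), rederive the lower bound $\sum_{j<m} t_{j,m} \ge -a_m$ concretely from the diagonal point of $\tes_n(\bc)$ (this is in effect the same computation, at the identity-type vertex, that the paper uses to prove the cone inequality in the first place), and get strictness not from an $\epsilon$-perturbation but from $s_{m-2} \ge a_l > 0$, which caps $\min_{\bm} m_{m-1,m}$ at $\max(0,\,-a_m-s_{m-2}) < -a_m$ while the staircase flow kills the entries above it. Your version buys a slightly more elementary argument (no need for the inequality description of the cone, and it handles $m=n$ without a separate appeal to Remark \ref{rmk:redundant}); the paper's version isolates the flow construction into a reusable lemma and avoids your case split on the sign of $-a_m - s_{m-2}$. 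The one place your write-up is thinner than a complete proof is the assertion that a flow with $(m-1,m)$-entry equal to $\max(0,\,-a_m-s_{m-2})$ actually exists in $\flow_n(\ba)$: this does hold (route the full supply $s_{m-2}$ of vertices $1,\dots,m-2$ toward vertex $m$, let vertex $m-1$ cover the residual demand, and send its surplus $s_m \ge 0$ onward, which is feasible precisely because $\ba \in A_n$), but it needs to be spelled out at the level of detail of the paper's Lemma \ref{flowlem}.
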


Before we prove each of these two propositions, we need a preliminary lemma. 
Recall that $\be_k$ is the $k$-th standard basis vector of $\mathbb{R}^n$.
For convenience, for every $1 \le i \le j \le n,$ we denote by $\be_{i,j}$ to be the unique upper triangular matrix in $\U(n)$ whose $(i,j)$-th entry is its only nonzero entry and has value $1$. So $\{ \be_{i,j}\}$ is the standard basis for $\U(n)$.

\begin{lem}\label{lem:translation}
  Let $\ba = (a_1, \dots, a_n) \in A_n$ and $c > 0$. Suppose for a fixed pair of indices $1 \le i \le j\le n,$ we have that every flow $\beff=(f_{i,j}) \in \flow_n(\ba)$ satisfies $f_{i,j} \ge c.$ Then
  \[ \flow_n(\ba) - c \be_{i,j} = \begin{cases}
    \flow_n(\ba - c\be_i + c\be_j), & \quad \text{if $i < j$}, \\
  \flow_n(\ba - c\be_i), & \quad \text{if $i=j$}.
\end{cases}\]
\end{lem}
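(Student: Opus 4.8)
The plan is to establish the set equality directly, by exhibiting the translation $\bm \mapsto \bm - c\be_{i,j}$ as a bijection between the two flow polytopes and checking membership against the defining description $\flow_n(\cdot) = \{\bm \in \U(n) : L_n\bm = \cdot,\ -\bm \le \0\}$ from \eqref{eq:defnflow}. The whole argument reduces to one algebraic identity plus two routine inclusions, so I would organize it that way.

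First I would record the single fact that drives everything: the effect of $L_n$ (equivalently the hook-sum map $\Hs$) on the basis matrix $\be_{i,j}$. A direct computation from the definition of $\hs_k$ gives
\[ \Hs(\be_{i,j}) = \begin{cases} \be_i - \be_j, & i < j, \\ \be_i, & i = j, \end{cases}\]
since the single nonzero entry contributes $+1$ to the $i$-th hook sum, and when $i<j\le n$ it also contributes $-1$ to the $j$-th hook sum (it sits above the diagonal in column $j$), whereas when $i=j$ the entry lies on the diagonal and affects only the $i$-th hook sum. In either case the target net-flow vector in the statement is exactly $\ba - c\,\Hs(\be_{i,j})$, so writing $\ba'$ for this vector lets me treat both cases uniformly.

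Then I would prove the two inclusions. For $\flow_n(\ba) - c\be_{i,j} \subseteq \flow_n(\ba')$: given $\bm \in \flow_n(\ba)$, set $\bm' = \bm - c\be_{i,j}$; linearity of $L_n$ gives $L_n\bm' = \ba - c\,\Hs(\be_{i,j}) = \ba'$, every entry of $\bm'$ other than the $(i,j)$-entry is unchanged and hence non-negative, and the $(i,j)$-entry is $m_{i,j}-c \ge 0$ precisely by the hypothesis that every flow in $\flow_n(\ba)$ has $f_{i,j}\ge c$; thus $\bm' \in \flow_n(\ba')$. The reverse inclusion is even simpler: for $\bm'\in\flow_n(\ba')$, set $\bm = \bm' + c\be_{i,j}$, so $L_n\bm = \ba' + c\,\Hs(\be_{i,j}) = \ba$ and $\bm \ge \0$ because I only increased a single entry of the already non-negative $\bm'$; hence $\bm\in\flow_n(\ba)$ and $\bm' \in \flow_n(\ba) - c\be_{i,j}$. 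Combining the inclusions yields the claimed equality.

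There is no serious obstacle here — the statement is a bookkeeping identity. The only point requiring care is the non-negativity of the $(i,j)$-entry after subtracting $c$, and this is exactly where the hypothesis $f_{i,j}\ge c$ is used: without it the translate would contain matrices with a negative entry, and the inclusion into a flow polytope would fail. I would also double-check the diagonal/off-diagonal convention when computing $\Hs(\be_{i,j})$, since the $i=j$ case corresponds to the edge $(i,n+1)$ and therefore contributes only to the $i$-th coordinate, which is what produces the single-term $\ba - c\be_i$ on the right-hand side.
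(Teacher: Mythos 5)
Your proposal is correct and follows essentially the same route as the paper: compute $\Hs(\be_{i,j})$, use linearity of the hook-sum map for the equality constraints, and use the hypothesis $f_{i,j}\ge c$ (resp.\ $c>0$) for non-negativity in the two directions. The only cosmetic difference is that you treat both cases uniformly via $\ba' = \ba - c\,\Hs(\be_{i,j})$, whereas the paper writes out only the $i<j$ case and declares $i=j$ similar.
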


\begin{proof} We only prove the case when $i <j$, as the case when $i=j$ can be proved similarly. Let $\beff \in \U(n)$ and $\bg = \beff - c\be_{i,j}.$
  First, since $\Hs(\be_{i,j})= \be_i -\be_j$, we have that $\Hs(\beff) = \ba$ if and only only if $\Hs(\bg) = \ba - c\be_i + c\be_j.$ 
  Next, if $\beff \in \flow_n(\ba)$, because $f_{i,j} \ge c,$ we see that all entries of $\bg$ are non-negative. Conversely, if $\bg \in \flow_n(\ba - c\be_i + c\be_j)$, since $c > 0,$ we must have that all entries of $\beff=\bg + c\be_{i,j}$ are non-negative. 
  Therefore, we conclude that $\beff \in \flow_n(\ba)$ if and only if $\bg \in \flow_n(\ba - c\be_i + c\be_j).$ Then the conclusion of the lemma follows.
\end{proof}

\begin{proof}[Proof of Proposition \ref{prop:flowtrans}] 
  Suppose $l=n$. Then by Lemma \ref{lem:beforecp}, the first $n-1$ rows of all the flows in $\flow_n(\ba)$ are fixed. However, the $n$-th hook sum condition makes the only entry in the $n$-th row to be fixed as well. Hence, \eqref{itm:flowtrans2} follows.

  Suppose $1 \le l \le n-1$ and assume $a_{i_{1}},a_{i_{2}},\dots,a_{i_{m}}$ are all the voided positive entries before the critical position where $1\le i_1 < i_2 < \dots < i_m < l$. It follows from Lemma \ref{lem:beforecp} and Lemma \ref{lem:translation} that
  \[ \flow_{n}(\ba)- \sum_{p=1}^m a_{i_p} \be_{i_p, i_p+1} =\flow_{n}(0,\dots,0,a_{l},a_{l+1},\dots,a_{n}).\]
  If $a_{l+1}\geq 0$, then we see that $\ba':=(0,\dots,0,a_{l},a_{l+1},\dots,a_{n})$ is a desired choice for $\hat{a}$. 

  Suppose $a_{l+1}<0$. We now consider the flow polytope $\flow_{n}(\ba')$. Clearly, $l$ is still the critical position of the vector $\ba'$. Let $\bg \in \flow_{n}(\ba').$ It is clear (or follows from Lemma \ref{lem:beforecp}) that the first $l-1$ rows of $\bg$ are zero rows.
  Thus, the $(l+1)$-th hook sum of $\bg=(g_{i,j})$ is: \[\hs_{l+1}(\bg)=g_{l+1,l+1}+\cdots+g_{l+1,n}-g_{l,l+1}=a_{l+1} \]
Since $g_{l+1,l+1}, \cdots, g_{l+1,n}$ are all non-negative, we get that $g_{l,l+1} \ge - a_{l+1} > 0$. Now applying Lemma \ref{lem:translation} with $c = -a_{l+1}$, we get
\[ \flow_n(\ba') - c \be_{l,l+1} = \flow_n( \ba' - c\be_l + c\be_{l+1}).\]
where
\[ \ba' - c\be_l + c\be_{l+1} = (0, \dots, 0, a_l + a_{l+1}, 0, a_{l+2}, \dots, a_n).\]
However, by the last conclusion of Lemma \ref{lem:charcpp}, we have $a_l + a_{l+1} > 0.$ Hence, $\ba' - c\be_l + c\be_{l+1}$ is a desired choice for $\hat{\ba}$ for this case. Therefore, \eqref{itm:flowtrans1} holds. 
\end{proof}

The next lemma shows the existence of a certain flow which will be used in the proof of Proposition \ref{prop:nondeform}. 

\begin{lem} \label{flowlem}
Assume the hypothesis of Proposition \ref{prop:nondeform}. If $m \neq n$, i.e., $l +2 \le m < n$, then there exists a flow $\beff=(f_{i,j})$ in $\flow_{n}(\ba)$ satisfying the following conditions:

\begin{enumerate}[label=(\roman*)]
  \item\label{itm:pos} $f_{l,m} > 0$.
  \item\label{itm:-am} $f_{l,m} + f_{l+1,m} + \cdots + f_{m-1,m} = -a_m.$
  \item\label{itm:zerorow} 
    The $m$th row of $\beff$ is a zero row.
  \item\label{itm:zerorows} The first $l-1$ rows of $\beff$ are zero rows. 
\end{enumerate}
\end{lem}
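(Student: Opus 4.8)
The plan is to prove the lemma by \emph{explicitly constructing} a flow $\beff=(f_{i,j}) \in \flow_n(\ba)$ with the four required properties, rather than arguing abstractly. Before building it, I would record two reductions. First, condition \ref{itm:zerorows} (the vanishing of the first $l-1$ rows) holds automatically for \emph{every} flow in $\flow_n(\ba)$: since $a_1=\cdots=a_{l-1}=0$ and all entries are non-negative, the equations $\hs_1(\beff)=\cdots=\hs_{l-1}(\beff)=0$ force these rows to vanish one at a time (this is also contained in Lemma \ref{lem:beforecp}). Second, condition \ref{itm:-am} is a \emph{consequence} of \ref{itm:zerorow} and \ref{itm:zerorows}: if the $m$-th row is zero and the first $l-1$ rows vanish, then $\hs_m(\beff)=-\sum_{j=l}^{m-1} f_{j,m}=a_m$, which is exactly \ref{itm:-am}. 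Hence it suffices to exhibit a flow satisfying \ref{itm:pos} and \ref{itm:zerorow}.

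For the construction I would route the flow along the superdiagonal, divert a small positive amount directly from vertex $l$ into vertex $m$ via the edge $(l,m)$, and then steer all remaining flow \emph{around} vertex $m$. Write $s_k:=\sum_{i=1}^{k} a_i$, so that $s_k\ge 0$ for all $k$ since $\ba\in A_n$, and moreover $s_k \ge a_l>0$ for $l\le k\le m-1$ because $a_l>0$ and $a_{l+1},\dots,a_{m-1}\ge 0$ (as $a_m$ is the first negative entry). Fix $\epsilon$ with $0<\epsilon<\min(a_l,-a_m)$ and set, with all unspecified entries equal to $0$: $\ f_{l,m}=\epsilon$; $\ f_{i,i+1}=s_i-\epsilon$ for $l\le i\le m-2$; $\ f_{m-1,m}=-a_m-\epsilon$; $\ f_{m-1,m+1}=s_m$; $\ f_{i,i+1}=s_i$ for $m+1\le i\le n-1$; and $\ f_{n,n}=s_n$. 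By construction the $m$-th row is identically zero, giving \ref{itm:zerorow}, and $f_{l,m}=\epsilon>0$ gives \ref{itm:pos}.

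The remaining work is a verification that this $\beff$ lies in $\flow_n(\ba)$, i.e., that all entries are non-negative and that $\hs_i(\beff)=a_i$ for every $i$. Non-negativity follows from the partial-sum inequalities above together with the choice of $\epsilon$: one has $s_i-\epsilon>0$ for $l\le i\le m-2$, $\epsilon>0$, $-a_m-\epsilon>0$, and $s_m,\dots,s_n\ge 0$ because $\ba\in A_n$. The hook sum identities are checked vertex by vertex and telescope by design: for $i<m$ the superdiagonal values yield $\hs_i(\beff)=a_i$; at vertex $m$ the total inflow $\epsilon+(-a_m-\epsilon)=-a_m$ gives $\hs_m(\beff)=a_m$; routing the leftover amount $s_m$ from vertex $m-1$ directly to $m+1$ restarts a clean superdiagonal flow realizing $a_{m+1},\dots,a_n$ and depositing $s_n$ at the sink $n+1$. (The balance at vertex $m-1$ is the one nontrivial check: its inflow $s_{m-2}-\epsilon$ and outflow $(-a_m-\epsilon)+s_m$ differ by $s_m-s_{m-2}-a_m=a_{m-1}$, as required.)

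The one genuinely delicate point — and the reason the hypothesis $m\neq n$ is essential — is getting the leftover flow $s_m$ past vertex $m$ without creating any nonzero entry in the $m$-th row. This is exactly what the edge $(m-1,m+1)$ accomplishes, and that edge exists only when $m+1\le n$; the subsequent superdiagonal routing on vertices $m+1,\dots,n$ is feasible precisely because the partial sums $s_{m+1},\dots,s_n$ are non-negative. I expect everything else in the proof to be routine bookkeeping once this bypass is in place.
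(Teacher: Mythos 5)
Your proof is correct, and it is the same basic strategy as the paper's (an explicit flow plus a direct verification), but the flow you build is genuinely different. The paper spreads the required inflow $-a_m$ into column $m$ across rows $l,\dots,k$, where $k$ is the index with $\sum_{p=1}^{k-1}a_p < -a_m \le \sum_{p=1}^{k}a_p$, sending the excess $d_i=a_i-c_i$ from each vertex $i$ straight to $m+1$; your flow instead runs along the superdiagonal, diverts a small amount $\epsilon$ along the edge $(l,m)$ to secure condition (i), deposits the remaining $-a_m-\epsilon$ into $m$ from vertex $m-1$, and bypasses $m$ via the edge $(m-1,m+1)$. Your version avoids the choice of $k$ and the case analysis in defining $c_i$, at the cost of introducing the parameter $\epsilon$; your two preliminary reductions (that condition (iv) holds for \emph{every} flow in $\flow_n(\ba)$, and that (ii) is forced by (iii) and (iv) via the $m$-th hook sum) are a nice simplification not made explicit in the paper, and they cut the verification down to (i) and (iii) only. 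All the boundary cases ($m=l+2$, $m=n-1$) check out, and you correctly isolate where the hypothesis $m\neq n$ enters, namely the existence of the off-diagonal entry $f_{m-1,m+1}$ used for the bypass.
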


\begin{proof}
  We prove the lemma by constructing such a flow $\beff$. Because the first $l-1$ entries of $\ba$ are all zero and $a_m$ is the first negative entry, we have that \[\sum_{p=1}^{l-1} a_p =0, \ \sum_{p=1}^l a_p, \ \sum_{p=1}^{l+1} a_p,  \dots, \sum_{p=1}^{m-1} a_p\] is a weakly increasing sequence. Since $\ba \in A_{n}$, one sees that the last entry in the above sequence is at least $-a_m$, which is strictly greater than the first entry in the sequence.
  Therefore, there exists (a unique) $k \in \{l, l+1, \dots, m-1\}$ such that 
\[ \sum_{p=1}^{k-1} a_p < -a_{m} \le \sum_{p=1}^k a_p.\]
Let 
\[ c_i = \begin{cases} a_i, & \text{if $l \le i \le k-1$} \\
    -a_m - \sum_{p=1}^{k-1} a_p, & \text{if $i=k$} \\
    0, & \text{if $k+1 \le i \le m-1$}.
\end{cases},
\]
and then let 
\[d_i = a_i - c_i, \text{ for $l \le i \le m-1$}.\]
One can verify that all $c_i$'s and $d_i$'s are non-negative, and moreover,
\begin{equation}
  c_l > 0 \quad \text{and} \quad c_l + c_{l+1} + \cdots c_{m-1} = -a_m.
  \label{eq:ci}
\end{equation}
Now we are ready to construct a desired flow $\beff$. We define $\beff=(f_{i,j}) \in \U(n)$ as 
    \[ f_{i,j} = \begin{cases} c_i, & \text{if $l \le i \le m-1$ and $j=m$} \\
	d_i, & \text{if $l \le i \le m-1$ and $j=m+1$} \\
	\sum_{p=1}^i a_p, &\text{if $m+1 \le i \le n-1$ and $j=i+1$} \\
	\sum_{p=1}^n a_p, & \text{if $i=j=n$} \\
	0, & \text{otherwise}.
    \end{cases}\]
It is straightforward to check that $\beff \in \flow_{n}(\ba)$ and the desired properties of $\beff$ follow from the definition of $\beff$ and \eqref{eq:ci}.
\end{proof}

\begin{proof}[Proof of Proposition \ref{prop:nondeform}]

  Suppose $m=n.$ Then $a_1, a_2, \dots, a_{n-1} \ge 0$ and $a_n < 0.$ Consider $\bg = (g_{i,j}) \in \U(n)$ defined by $g_{i,i} = a_i$ for each $i$ and $g_{i,j} = 0$ for each $1 \le i <j \le n.$ Clearly $\bg \notin \flow_n(\ba)$ as $g_{n,n} = a_n < 0.$ However, $\bg$ is a point in the polytope defined by \eqref{eq:flowmatrix0}. This means if we remove the inequality $m_{n,n} \ge 0$ from the inequality description \eqref{eq:defnflow} for $\flow_n(\ba)$, we obtain a different polytope. Hence, the inequality $m_{n,n} \ge 0$ is not redundant in \eqref{eq:defnflow}. Hence, by Remark \ref{rmk:redundant}, we conclude that $\flow_n(\ba)$ is not a deformation of $\tes_n(\ba_{0})$.

  Suppose $m \neq n.$ By Remark \ref{rmk:redundant}, we only need to consider the situation when \eqref{eq:flowmatrix0} is a linear inequality description for $\flow_n(\ba)$. Note that the description is not necessarily tight. However, it is clear that there exists $\tilde{\bb}=(\tilde{b}_{i,j}) \in \widetilde{\U}(n)$ such that 
\begin{equation}\label{flowtight}
  \flow_{n}(\ba)=Q(\ba, \tilde{\bb}) = \{\bm \in \U(n)~~|~~L_{n}\bm=\ba \text{  and  } -P_{n}\bm \leq \tilde{\bb}\}.
\end{equation}
is a tight linear inequality description for $\flow_n(\ba).$ By Remark \ref{rmk:tight}, one sees that it is enough to show that $(\ba, \tilde{\bb})$ is not in the deformation cone of $\tes_n(\ba_{0})$, which by Theorem \ref{thm:tesdefcone} is equivalent to that $\hs_{i}(\tilde{\bb}) < -a_{i}$ for some $1 \le i \le n-1.$ We will show that $\hs_{m}(\tilde{\bb}) < -a_{m}$. Note that by the definition of tightness and the assumption that $\flow_n(\ba)$ is defined by \eqref{eq:flowmatrix0}, the entries in $\tilde{\bb}$ satisfy:
\begin{equation}
  0 \le -\tilde{b}_{i,j} = \min_{\bm \in \flow_n(\ba)} m_{i,j}.
  \label{eq:entrytildeb}
\end{equation}

Let $\beff=(f_{i,j})$ be the flow in $\flow_{n}(\ba)$ assumed by Lemma \ref{flowlem}. 
Then condition \eqref{eq:entrytildeb} implies that  
\[ -\tilde{b}_{i,m} \le f_{i,m} \text{ for $l \le i \le m-1$}.\]
Using condition \eqref{eq:entrytildeb} together with Lemma \ref{flowlem}/\ref{itm:zerorow}\ref{itm:zerorows}, we conclude that
\[ \tilde{b}_{m,m}=\tilde{b}_{m,m+1}=\cdots=\tilde{b}_{m,n}=0 \text{ and } \tilde{b}_{1,m} =\tilde{b}_{2,m}=\cdots =\tilde{b}_{l-1,m} = 0.\]
Next, by Lemma \ref{flowlem}/\ref{itm:pos}, we may choose a number $\epsilon$ such that $0 < \epsilon < f_{l,m}$. Then we construct $\bg=(g_{i,j})\in \U(n)$ from $\beff$ by letting 
\[ g_{l,m}=f_{l,m}-\epsilon, \ g_{l+1,m}=f_{l+1,m}+\epsilon, \ g_{l,l+1}=f_{l,l+1}+\epsilon,\] and keeping all the other entries. 
One can check that $\bg$ is another flow in $\flow_{n}(\ba)$. Applying \eqref{eq:entrytildeb} again for $(i,j)=(l,m)$, we get
\[ -\tilde{b}_{l,m} \le g_{l,m} = f_{l,m}-\epsilon.\]
Therefore,
\begin{align*}
\hs_{m}(\tilde{\bb}) =& \sum_{j=m}^n \tilde{b}_{m,j} - \sum_{i=1}^{m-1} \tilde{b}_{i,m} = \sum_{j=m}^n \tilde{b}_{m,j} -\sum_{i=1}^{l-1} \tilde{b}_{i,m}- \sum_{i=l}^{m-1} \tilde{b}_{i,m} = 0 - 0 - \sum_{i=l}^{m-1} \tilde{b}_{i,m} \\
\le&  (f_{l,m}-\epsilon) +  f_{l+1,m} + \cdots + f_{m-1,m} < f_{1,m} + f_{2,m} + \cdots + f_{m-1,m} = -a_m,
\end{align*}
where the last equality follows from Lemma \ref{flowlem}/\ref{itm:-am}. This completes our proof.
\end{proof}

\begin{proof}[Proof of Theorem \ref{charthm}]

  By Proposition \ref{prop:flowtrans}/\eqref{itm:flowtrans2}, if $l=n,$ then $\flow_n(\ba)$ is a point, which clearly is a deformation of $\tes_n(\ba_{0})$. Therefore, it is left to prove that if $1 \le l \le n-1,$ then
  \begin{equation}
    \text{  $\flow_n(\ba)$ is a deformation of $\tes_{n}(\boldsymbol{1})$}  \quad \Longleftrightarrow \quad  a_{l+2},a_{l+3},\dots,a_{n} \geq 0.
    \label{eq:iff}
  \end{equation}
  However, by Proposition \ref{prop:flowtrans} and Remark \ref{rmk:translation}, one sees that it is enough to prove \eqref{eq:iff} with the assumption that $\ba=(a_1, \dots, a_n)$ satisfies 
\[ a_1=a_2 = \cdots = a_{l-1}=0, \ a_l > 0, \ \text{ and } a_{l+1} \ge 0.\]
Then the forward direction of \eqref{eq:iff} immediately follows Proposition \ref{prop:nondeform}. 
If $a_{l+2},a_{l+3},\dots,a_{n} \geq 0,$ then $\ba \in \R^{n}_{\geq 0}$ and $\flow_{n}(\ba)=\tes_{n}(\ba)$, which by Theorem \ref{main} is a deformation of $\tes_{n}(\boldsymbol{1})$. Hence, the backward direction of \eqref{eq:iff} holds.
\end{proof}

\bibliographystyle{abbrv}
\bibliography{ref}

\end{document}